\newtheorem{theorem}{Theorem}
\newtheorem{corollary}[theorem]{Corollary}
\newtheorem{lemma}[theorem]{Lemma}
\newtheorem{proposition}[theorem]{Proposition}
\theoremstyle{definition}
\newtheorem{example}[theorem]{Example}
\newtheorem{nonexample}[theorem]{Non-Example}
\newtheorem{remark}[theorem]{Remark}
\newtheorem{definition}[theorem]{Definition}
\def\bha{{\boldsymbol \eta}}
\def\bmu{{\boldsymbol \mu}}
\def\semi{{\underline{\Fin}}}
\def\bzetaodd{\raisebox{.7em}{\rotatebox{180}{$\bzeta$}}}
\def\Rada#1#2#3{#1_{#2},\dots,#1_{#3}}
\def\copcat{{\tt COpCat}}
\def\COPCAT{{\mathbf {COpCat}}}
\def\bigstar{\star}
\def\M{{\EuScript M}}
\def\gr{{\tt GR}}
\def\Im{{\it Im}}
\def\prGR{{\bf prGR}}
\def\Leg{{\rm Leg}}
\def\Flag{{\rm Flg}}
\def\Vert{{\rm Ver}}
\def\Edg{{\rm Edg}}
\def\bA{{\mathbf A}}
\def\Vines{{\tt Vin}}
\def\ttV{{\tt V}}
\def\rada#1#2{{#1,\ldots,#2}}
\def\dash{{\hbox{\hskip .15em -\hskip .1em}}}
\def\unit{{{\mathbb I}_{\,\ttV}}}
\def\GR{{\mathbf {GR}}}
\def\sou{\bs} \def\tar{\bt}
\def\bR{{\boldsymbol R}}
\def\bzeta{{\boldsymbol \zeta}}
\def\ba{{\boldsymbol a}}
\def\bb{{\boldsymbol b}}
\def\bc{{\boldsymbol c}}
\def\colim{\mathop{\rm colim}\displaylimits}
\def\bt{{\mathbf t}}
\def\bs{{\mathbf s}}
\def\ot{{\otimes}}
\def\oP{{\EuScript P}}
\def\boP{\boldsymbol {\EuScript P}}
\def\bal{\boldsymbol {\alpha}}
\def\boE{\boldsymbol {\EuScript E}}
\def\oE{{\EuScript E}}
\def\btf{\boldsymbol {\tilde f}}
\def\id{{\mathbb 1}}
\def\bU{{\boldsymbol U\!}}
\def\bF{{\boldsymbol F}}
\def\E{{\mathbf E\,}}
\def\R{{\rm R\,}}
\def\sFin{{\tt Fin}}
\def\inv#1#2{#1^{-1}(#2)}
\def\rep#1#2#3#4{{(#1 \!\! \stackrel{#2}\longrightarrow \!
    \underline#3\, \raisebox{.6em}{\rotatebox{180}{$\rightsquigarrow$}} \ #4})}  
\def\lsq{\xymatrix@C=1em@1{&\ar@{~>}[l]}}
\def\bS{{\boldsymbol S}}
\def\bQ{{\boldsymbol Q}}
\def\bT{{\boldsymbol T}}
\def\bg{{\boldsymbol g}}
\def\bR{{\boldsymbol R}}
\def\boldf{{\boldsymbol f}\!}
\def\bg{{\boldsymbol g}}
\def\bh{{\boldsymbol h}}
\def\Gr{{\tt Gr}}
\def\crd{\hbox{$|$-$|$}}
\def\ttE{{\tt E}}
\def\ttB{{\tt B}}
\def\ttO{{\tt O}}
\def\bO{{\boldsymbol {\mathbf O}}}
\def\Fin{\hbox{\bf {Fin}}}
\def\tilde{\widetilde}
\title[Operadic categories with cardinalities in finite sets]{Cloven
  operadic categories:
An approach to operadic categories with cardinalities in finite unordered sets}
\author{Martin Markl}
\affiliation{The Czech Academy of Sciences, Institute of Mathematics, {\v Z}itn{\'a} 25,
         115 67 Prague 1, The Czech Republic}
\thanks{Supported by  RVO: 67985840.}
\begin{document}

\begin{abstract}
We introduce and study operadic categories with cardinalities in
finite sets and establish conditions under which their associated
theories of operads and algebras are equivalent to the standard framework
developed in~\cite{duodel}. Our approach is particularly natural in
applications to the operadic category of graphs and the related category of
modular operads and their clones.
\end{abstract}

\maketitle

\tableofcontents

\section*{Introduction}

Recall that the traditional operads~\cite{Artamonov,may:1972} in a
symmetric monoidal category $\ttV$, which is typically the category of
sets or graded vector spaces,
are collections $\oP=\{\oP(n)\, | \, n\geq 1\}$ of objects of~$\ttV$
such that
\begin{subequations}
\setlength{\leftmargini}{3em}
\begin{itemize}[topsep=0pt, partopsep=0pt, itemsep=0pt]
\item[(i)]
each $\oP(n)$ is a module over the symmetric group $\Sigma_n$, and
\item[(ii)]
there are  composition laws
\begin{equation}
\label{po 6ti dnech}
\mu_\rho :
\oP(k) \ot \oP(n_1) \ot \cdots \ot \oP(n_k) \longrightarrow
\oP(m),\
m = n_1 + \cdots +
n_k,\ k \geq 1,\ \Rada n1k \geq 1,
\end{equation} 
\end{itemize}
satisfying appropriate
associativity and equivariance axioms; we assumed for simplicity 
that there is no $\oP(0)$. The
meaning of the index $\rho$ at $\mu_\rho$ is explained below. We call
the above form of definition the {\/\em skeletal\/} definition of
(classical) operads, the natural numbers featured in~(\ref{po 6ti
  dnech}) are the {\/\em arities\/} of the corresponding components.
An equivalent, {\/\em non-skeletal\/} definition with 
arities in finite sets was given
in~\cite[Section~II.1.7]{markl-shnider-stasheff:book}. Operads in this setup
appear as collections \hbox{$\boP = \{\, \boP(X)\, | \, \hbox {$X$ a finite
  set\ }\}$}, with
two kinds of data:
\setlength{\leftmargini}{3em}
\begin{itemize}[topsep=0pt, partopsep=0pt, itemsep=0pt]
\item [(i)]
a natural morphism $\varphi_* : \boP(X') \to \boP(X'')$ specified for each
isomorphism $\varphi: X' \to X''$ of finite sets, and
\item[(ii)]
composition laws
\begin{equation}
\label{zablokovana karta}
\mu_g:\boP(Y) \ot \bigotimes_{y \in Y} \boP(\inv gy) \longrightarrow \boP(X),
\end{equation}
given for each epimorphism $g : X \to Y$ of finite sets,
\end{itemize}
satisfying again appropriate axioms. In~\eqref{zablokovana karta},
$\inv gy$ is the set-theoretic preimage of $y \in Y$, and
the display involves
the ``unordered tensor
product''~\cite[Definition~I.1.58]{markl-shnider-stasheff:book} 
of the preimages indexed by the
{\em unordered\/} set $Y$.  
\end{subequations}

Notice that the operations in~(\ref{po 6ti dnech}) are 
parametrized by maps $\rho :\{\rada 1m\} \to \{\rada 1k\}$ so that
$n_i$ appears as the cardinality of the preimage $\inv
\rho i$, $i \in \{\rada 1k\}$. We emphasize that $\rho$
can be an {\/\em arbitrary\/}, not necessarily order-preserving
map. Therefore both~(\ref{po 6ti dnech}) and~(\ref{zablokovana karta}) are  
parametrized by maps of finite sets. The sets in~(\ref{po 6ti dnech})
belong to the small category of finite ordinals.
Their order gives a preferred order of the
preimages  $\inv \rho i$, which circumvents the use of the unordered
tensor product. On the other hand,~(\ref{zablokovana karta}) refers to the
big category of all finite sets, but the formula is ``canonical,'' 
since it does not refer to any particular choice of the
skeletal subcategory of finite sets, realized by finite ordinals
in~\eqref{po 6ti dnech}. 

Let us briefly mention non-$\Sigma$ (non-symmetric) operads. In the traditional
presentation, such an operad $\underline\oP$ 
is a collection of objects of $\ttV$, no group actions assumed, with
$\rho$ in~(\ref{po 6ti dnech}) order-preserving. In the
non-skeletal setup, it is a collection $\underline\boP$
indexed by finite ordered sets, with composition
laws~(\ref{zablokovana karta}) parametrized by {\em order-preserving\/}
maps. But to keep both definitions equivalent, we still need
natural actions  $\varphi_*: \underline\boP(X') \to \underline\boP(X'')$ of
order-preserving maps which do not appear in the skeletal version.

The preference for the skeletal version for classical operads is
quite understandable. However, the picture changes for cyclic operads and their
clones, such as modular operads. 
Here the skeletal version requires the choice of a skeletal
subcategory of the category of finite sets with the cyclic group
action, and a ``nice'' linear
formula as in~(\ref{po 6ti dnech}) moreover requires non-canonical choices of
linear orders of cyclically ordered sets. The resulting formulas are
so clumsy as to be practically useless, cf.~\cite[Definition~6.23]{DMJ}.  
On the other hand, the non-skeletal version, written out 
in~\hbox{\cite[Definition~A.1]{kodu}} or~\cite[Definition~6.16]{DMJ}, is
sensible and intuitively clear.

So both approaches have their merits, depending on the context. The
applications we had had in mind, together with the motivating example of
Batanin's $k$-trees, lead us in~\cite{duodel} to the definition of operadic
categories over finite ordinals. Our aim
is to allow {\/\em
  all\/} finite sets and to determine when both approaches lead to equivalent notions
of operads and their algebras. A~typical example for the later approach
is the category of graphs for which, in contrast to the skeletal version, no
orders of vertices and (half)edges are required. We believe that
extending the concept of operadic categories to allow the 
standard, ``na\"\i ve'' approach to graphs  will make the theory more
amenable for applications. 

An anonymous referee suggested the following example of a situation
in which one is naturally led to consider operads with arities in
arbitrary finite sets. Let $\oP =\{\oP(n) \mid n\geq 1\}$ be the
standard classical operad as in~(\ref{po 6ti dnech}), with the
structure expressed in the equivalent form of
$\circ_i$-operations
\begin{subequations}
\begin{equation}
\label{Pisi uz v Bonnu}
\circ_i : \oP(m) \otimes \oP(n) \longrightarrow \oP(m+n-1), \ m,n
\geq 1, 1 \leq i \leq m.
\end{equation}
The frequently used bar construction $B(\oP)$ of $\oP$ 
is formed by rooted trees with
decorated vertices. The decoration is such that a vertex $v$ with 
$n$ incoming edges is decorated by the component $\oP(n)$ of
$\oP$. The differential is given by the sum of edge
contractions. However, since the incoming edges of $v$ are unordered,
it is not possible to determine which structure map~(\ref{Pisi uz v Bonnu}) of
$\oP$ corresponds to a~given edge contraction. A standard
way to overcome this difficulty is to allow the operad to have
arities in arbitrary finite sets, and the structure operations of the form
\begin{equation}
\label{Jarka mi vcera zalila kyticky.}
\circ_e : \oP(X) \otimes \oP(Y) \longrightarrow \oP((X \setminus
\{e\}) \cup Y), \ e \in X, \ X\cap Y = \emptyset.
\end{equation}
\end{subequations}
Then the vertex $v$ is
decorated by the component $\oP(X)$ of this extended operad,
with $X$ the (finite) set of incoming edges of $v$. The
structure operation corresponding to the contraction of an incoming edge
$e$ of $v$ is precisely $\circ_e$ in~(\ref{Jarka mi vcera zalila kyticky.}). The
applicability of this procedure relies on the equivalence
between the
`standard' definition of operads and operads with arities in
finite sets, cf.~\cite[Section~II.1.7]{markl-shnider-stasheff:book} again. 
Such an extension is standard when working with
operads and their generalization. The goal of the present article is
to lift this procedure to the level of operadic categories governing
the given type of operads.

\noindent 
{\bf Terminology and notation.} We will call operadic categories over
finite ordinals introduced in~\cite{duodel} {\/\em thin\/}, those
over arbitrary finite sets treated
in this article  {\/\em thick\/}. In other words, thin stands for 
skeletal and thick stands for non-skeletal. 
Whenever it makes sense to do so, the symbols 
for objects related to thick operadic categories are typed in {\bf bold}.
For instance, the cardinality functor of a thin operadic
category $\ttO$ will be written as $\crd : \ttO \to \sFin$, its thick version
as \hbox{$\crd : \bO \to \Fin$}. 

Operadic
categories and their operads in~\cite{duodel} were unital by
definition. However, in our subsequent work~\cite{blob} we realized
that for some applications is is better to consider unitality as an
additional, independent, property. We follow this convention 
in the present article.

\noindent 
{\bf Results and novelties.}  Theorem~\ref{Dnes jedu do
  Kolina a zitra letim do Dvora.} establishes, and its proof
explicitly describes, a natural equivalence between the
categories of thin and thick operadic categories, respectively, such that the
categories of operads and their algebras of the corresponding operadic
categories are naturally equivalent. For such equivalences to exist, we must
restrict our focus to operadic categories which are {\em cloven\/}.
This property, introduced in Definitions~\ref{8 dni na chalupe!}
and~\ref{Zase je to nejiste.}, requires the existence of a {\em cleavage\/} -- a
functorial choice of certain lifts.  The
categories of graphs analyzed in detail in the last section provide
convincing and easy to work with examples of thick operadic categories.

Regarding the applicability of our approach,
most operadic categories that a working
mathematician may encounter are either
ordered, meaning by definition that the cardinality functor
factorizes through the category of finite ordinals
and order-preserving maps, or cloven. Ordered operadic
categories can be treated by a simpler version of our
theory. The only ``non-synthetic''
examples of operadic categories that are neither cloven nor ordered
we know of are the operadic category of Batanin’s $k$-trees
and the operadic category of vines, cf.~non-Examples~\ref{za necele
  tri hodiny} and~\ref{Dnes_na_muslicky.}. 
Our approach does not produce thick counterparts of these
categories with equivalent categories of operads and
their algebras, and we doubt that they exist.

Let us see what happens if we replace finite sets by finite {\/\em
  ordered\/} sets.
For the lack of better terminology we will call an
operadic category {\em semi-ordered\/}, if a factorization of the cardinality
functor through the category of ordered sets and their,
not-necessarily order-preserving, morphisms is given. Obviously, a~thin operadic
category is always semi-ordered. A semi-ordered operadic
category is ordered,  if the cardinality
functor factorizes through the category of ordered sets and
{\em order-preserving\/} maps. A suitably modified notion of a cleavage
makes sense for ordered and semi-ordered operadic
categories; thin operadic
categories are always cloven in this modified sense.

In~Theorem~\ref{V patek exkurze do Caslavi.} we establish an equivalence between
the category of thin operadic categories and the category of thick
semi-ordered cloven operadic categories, which further restricts to an
equivalence of the subcategories of ordered operadic categories.  This
equivalence has the expected property that the corresponding
operadic categories have equivalent categories of operads and their
algebras. We are aware that this setup is not very interesting, as
it does not achieve our main goal of avoiding the use of ordered
sets. We include it only for the sake of completeness.

\noindent
{\bf Plan of the paper.}
In Sections~\ref{Jak dlouho bude ta ``rekonstrukce'' trvat?}--\ref{MR
  byla v poradku!}, we introduce thick 
operadic categories, the concept of cleavage, the related notions of
operads and their algebras, and prove their basic properties. Section~\ref{Dnes je
  Mikulase, a Mikulasek s Lachtankem jeste spinkaji.} contains the
main results of the paper. It introduces two functors --
the extension and the restriction -- between the category of cloven thin and
the category of cloven thick operadic categories 
which together form a pair of mutually inverse equivalences. We
show that the corresponding categories of
operads and their algebras are equivalent. Section~\ref{co tri dny} is
devoted to a~detailed study of the thick version of the category of
graphs. In the Appendix we briefly sketch the (semi)ordered version of
our theory.

\noindent 
{\bf Requirements.} The standard, ``thin''
operadic categories, their operads
and algebras were introduced in~\cite{duodel}.
We will assume familiarity with  this apparatus. 
Some knowledge of the classical operads and PROPs
may ease reading, cf.~\cite{markl:handbook} or 
the monograph~\cite{markl-shnider-stasheff:book}.
The present article is a loose continuation of the
series~\cite{env,kodu,blob} of papers devoted to general properties of
operadic categories, so acquaintance with  those sources is also welcome.

\noindent {\bf Acknowledgment.} The author wishes to thank the referee
for carefully reading the paper and for a concrete suggestion that led to a
paragraph in the introduction, helping the reader to better understand
the context of the results of this article. The author is also
grateful to the Max Planck Institute for Mathematics in Bonn for its
hospitality and financial support.

\section{Thick operadic categories}
\label{Jak dlouho bude ta ``rekonstrukce'' trvat?}

Operadic categories were introduced in~\cite{duodel} and further
studied in several subsequent papers. Their salient feature is that
each object has its cardinality, which is a finite ordinal. We shall
refer to these categories as {\em thin} operadic
categories. The aim of this section is to introduce a ``thick'' 
version with cardinalities in arbitrary finite sets.  

\noindent 
{\bf Finite sets.}
In the following text, $\Fin$ will denote the category of (all) finite sets.
For a map $\phi: S \to T$ in $\Fin$, $\inv \phi t$ will be the
set-theoretic preimage of $t \in T$. By $\sFin$ we denote the
skeletal category of finite sets. Its objects are finite ordinals 
$\underline n := (1,\ldots,n)$, $n \geq 0$, with the convention that 
$\underline 0$ is the empty set. The morphisms in $\sFin$ are all, not
necessarily order-preserving, maps. For $f : \underline m \to
\underline n$ and $i \in \underline n$ we denote by $\inv fi$ the
pullback of $f$ along the map $\underline 1 \to \underline n$ which picks
up $i \in \underline n$, as in
\begin{equation}
\label{Zase tady nekdo neco hrabe!}
\xymatrix@C=1.5em@R=-.1em{\inv fi\ar[dd] \ar[rr]^{I_i} && \underline m \ar[dd]^{f}
\\
&\hbox{\raisebox{1.5em}{\huge \hskip -2.3em  $\lrcorner$}}
\\
\underline 1 \ar[rr]^{1 \ \longmapsto \ i}     &&\ \underline n. 
}
\end{equation}
The set $\inv fi$ is unique by the skeletality of \/ $\sFin$.  We will
need at some
places in Section~\ref{Dnes je Mikulase, a Mikulasek s Lachtankem
  jeste spinkaji.} to emphasize that  $\inv fi$ is a pullback,
not an inverse image, we will thus write more specifically $f_{\it
  pb}^{-1}(i)$ instead.

We are going to define operadic categories with
arbitrary finite sets as cardinalities. They are straightforward
modifications of ``thin'' operadic categories as defined
in~\cite[Section~1]{duodel} which assumed cardinalities in $\sFin$.
We will however leave out all references to chosen local terminal
objects, since the related issue of unitality has a different flavor
in the non-skeletal case, cf.~the remarks following the definition. The
same definition appeared also in~\cite{BJM}.

\begin{definition}
\label{Uz treti tyden je strasliva zima.}
\begin{subequations}
A {\/\em thick (strict, nonunital) operadic
  category\/} is a 
category $\bO$  equipped with a~``cardinality''
functor  $|\dash|:\bO\to \Fin$ with the following properties.
For every $\boldf:\bT\to \bS$
in $\bO$ and every element $s\in |\bS|$ there is given an object
$\inv \boldf s \in \bO,$ {\it the $s$th fiber\/} of~$\boldf$,
such that 
\begin{equation}
\label{PSA}
|\inv \boldf s| = |\boldf|^{-1}(s).
\end{equation}
We moreover require that
\setlength{\leftmargini}{2em}
\begin{itemize}[topsep=0pt, partopsep=0pt, itemsep=0pt]
\item[(i)] 
for any commutative diagram
  \begin{equation}
    \label{Kure}
    \xymatrix@C = +1em@R = +1em{
      \bT      \ar[rr]^\boldf \ar[dr]_\bh & & \bS \ar[dl]^\bg
      \\
      &\bR&
    }
\end{equation}
 in $\bO$ and every $r\in |\bR|$, one is given a morphism
\[
\boldf_r: \bh^{-1}(r)\to \bg^{-1}(r)
\]
in $\bO$ such that $|\boldf_r|: |\bh^{-1}(r)|\to |\bg^{-1}(r)|$ equals the map
$|\bh|^{-1}(r)\to |\bg|^{-1}(r)$ of the preimages 
induced by
\[
\xymatrix@C = +1em@R = +1em{ |\bT| \ar[rr]^{|\boldf|} \ar[dr]_{|\bh|} & & |\bS|
\ar[dl]^{|\bg|}
\\
&|\bR|& }.
\]
This assignment must moreover assemble to a functor ${\rm Fib}_r: \bO/\bR \to \bO$.
\item[(ii)] 
In the situation of~\eqref{Kure}, for any $s\in |\bS|$,  one has the equality
\begin{equation}
\label{karneval_u_retardacku}
\boldf^{-1}(s) = \boldf_{|\bg|(s)}^{-1}(s).
\end{equation} 
\item[(iii)] 
Let 
\[
\xymatrix@C = +2.5em@R = +1em{ & \bS \ar[dd]^(.3){\bg} \ar[dr]^\ba & 
\\
\bT  \ar[ur]^\boldf    \ar@{-}[r]^(.7){\bb}\ar[dr]_\bh & \ar[r] & \bQ \ar[dl]^\bc
\\
&\bR&
}
\]
be a commutative diagram in $\bO$, and let $q\in |\bQ|, r :=
|\bc|(q) \in |\bR|$. Then
\[
\xymatrix@C = +1em@R = +1em{
\bh^{-1}(r)      \ar[rr]^{\boldf_r} \ar[dr]_{\bb_r} & & g^{-1}(r) \ar[dl]^{\ba_r}
\\
&\bc^{-1}(r)&
}
\]
commutes by (i), so it induces a morphism 
$(\boldf_r)_q: \bb_r^{-1}(q)\to \ba_r^{-1}(q)$. We have by~\eqref{karneval_u_retardacku} 
\[
\ba^{-1}(q)=\ba_r^{-1}(q) \ \mbox{and} \ \bb^{-1}(q)=\bb_r^{-1}(q).
\]
We  then require that $\boldf_q = (\boldf_r)_q$.
\end{itemize}
We will also assume that the set $\pi_0(\bO)$ of connected components
is small with respect to  a sufficiently big ambient universe.
\end{subequations}
\end{definition}  

Equation~\eqref{PSA} implies that the
fibers $\inv \boldf s$,  $s \in \bS$, of a morphism $\boldf: \bT \to
\bS$ in $\bO$ 
are mutually disjoint and the cardinality $|\bT|  \in \Fin$ of $\bT$
equals their disjoint union
\begin{equation}
\label{Vcera jsem byl s Jaruskou v Brne.}
|\bT| = \bigsqcup_{s \in |\bS|} |\inv \boldf s|.
\end{equation}

Let us recall the concept of unitality for ``thin'' operadic categories
$\ttO$ introduced in~\cite[Section~1]{duodel} and further 
refined in~\cite[Section~2]{blob}.

\begin{definition}
\label{Poleti se zitra?}
Suppose that  a family 
\begin{equation}
\label{Jsem posledni den v Cuernavace.}
\big\{U_c \in \ttO \ | \ c \in \pi_0(\ttO) \big\}
\end{equation}
of local terminal objects of a thin operadic category $\ttO$ 
is specified, with $U_c$
belonging to the connected component~$c$, and such that $|U_c| =
\underline 1$ for each $c$.

Then $\ttO$ is {\em
  left unital\/} if all fibers of the identity
automorphism $\id:S \to S$ belong to the set~\eqref{Jsem posledni den
  v Cuernavace.} 
of the chosen local terminal objects, 
for each $S \in \ttO$.  The category $\ttO$ is {\em right
  unital\/} if the fiber functor ${\rm Fib}_1 : \ttO/U_c \to \ttO$ is the domain functor
for each $c \in \pi_0(\ttO)$. Finally, $\ttO$ is {\em unital\/} if it
is both left and right unital. 
\end{definition}

Definition~\ref{Svezu zitra Jarusku Sambou?} formally makes sense also
in the thick case. While the right unitality occurs quite often, a thick
operadic category $\bO$ can be left unital only if $|\bT| = \{1\}$ for
all $\bT \in \bO$. Indeed, the cardinalities of the fibers of $\id :
\bT \to \bT$ are $\{t\}$, $t \in |\bT|$, by~\eqref{PSA}, but we required that
all the chosen local terminal objects have cardinalities $\{1\}$.
It is more natural to assume only that the fibers of
identity automorphisms are {\em isomorphic} to the chosen local terminal
objects, as done in Definition~\ref{Pisu v Mercine}, Section~\ref{V
  sobotu jdu s Jarkou na Vystaviste.}. 

\begin{example}
\label{Svezu zitra Jarusku Sambou?}
The category $\Fin$ is a thick operadic category, with the cardinality
given by the identity functor $\Fin \to \Fin$. It is right unital,  
with the family~\eqref{Jsem posledni den v Cuernavace.} consisting of
a single object $\{1\}$. More sophisticated examples of thick unital
operadic categories are provided by various categories of graphs
analyzed in Section~\ref{co tri dny}.
\end{example}

\section{Cloven operadic categories}

Given a functor $F: \ttE \to \ttB$ and objects $S \in \ttE$, $X \in \ttB$, we
express that $F(S) = X$ by $S \rightsquigarrow X$. This notation will
typically be used when $F$ is a cardinality functor of an operadic category.
The definition below has been inspired by the notion of a normal splitting
cleavage of a Grothendieck fibration. However, we will require no
fibration property of $F$, and the lifts will be required only for
isomorphisms.

\begin{definition}
\label{Pojedeme na tu Sumavu?}
A {\/\em cleavage\/} for $F: \ttE \to \ttB$ is a choice, for each
isomorphism $\sigma : X \to Y$ in $\ttB$ and an object 
$S \in \ttE$ with $F(S) =
X$, of an object
$T \in \ttE$ and of a {\em lift\/} $\tilde \sigma : S \to T$ such that
$F(\tilde \sigma) = \sigma$, i.e.\ 
\begin{equation}
\label{Pujdeme k Pakousum?}
\xymatrix@C=4em{S \ar@{-->}[r]^{\tilde \sigma} \ar@{~>}[d]& T  \ar@{~>}[d]
\\
X \ar[r]^\sigma_\cong &\ Y.
}
\end{equation}    
We moreover require the functoriality of the lifts, 
that is $\tilde{\sigma''\sigma''} =
\tilde{\sigma''} \tilde{\sigma'}$ in
\[
\xymatrix@C=4em{
\ar@/^2em/@{-->}[rr]^{\tilde{\sigma''\sigma''}}
S \ar@{-->}[r]^{\tilde {\sigma'}} \ar@{~>}[d]& T \ar@{-->}[r]^{\tilde {\sigma''}}  \ar@{~>}[d]&R \ar@{~>}[d]
\\
X \ar[r]^{\sigma'}_\cong &Y  \ar[r]^{\sigma''}_\cong&Z.
}
\]  
and that  the identities 
are lifted to the identities.
We then say that the functor $F$ is {\em cloven\/}.
\end{definition}

\begin{remark}
Since the lifts are required only for isomorphism, their functoriality
implies that the chosen lift in~(\ref{Pujdeme k Pakousum?}) is uniquely
determined either by its domain, or by its target. We will call the
diagrams as in~(\ref{Pujdeme k Pakousum?}) the {\/\em lifting squares\/}.
\end{remark}

\begin{definition}
\label{8 dni na chalupe!}
A thick operadic category $\bO$ is {\/\em cloven\/} if the cardinality
functor $\crd : \bO \to \Fin$ is cloven by a cleavage
compatible with the fiber
functor in the following sense. 
Every choice of isomorphisms $\rho :X \to Y$
and $\sigma: A \to B$ in $\Fin$ together with objects $\bS,\bT \in \bO$ such
that $|\bS| = X$, $|\bT|=A$, connected by a morphism $\boldf:\bS \to
\bT$, determine the remaining objects in the~diagram  
\begin{equation}
\label{Ve ctvrtek letim do Dvora.}
\xymatrix@C=2.3em@R=1.5em{& 
\ar@{-->}[rr]^{\btf} \ar@{~>}'[d][dd] \tilde \bS&& \tilde \bT \ar@{~>}[dd]
\\
\bS \ar@{-->}[ur]^{\tilde \rho}  \ar[rr]^(.6)\boldf  \ar@{~>}[dd] && 
\bT \ar@{-->}[ur]^{\tilde
  \sigma}  
\ar@{~>}[dd]
\\
&Y \ar@{-->}'[r]^(.7){|\btf|}[rr] && B
\\
X \ar[rr]^{|\boldf|}\ar[ur]^\rho_\cong  && A \ar[ur]^\sigma_\cong
}
\end{equation}
with commuting upper and bottom squares, and whose left and right faces
are lifting squares.

Namely, $\tilde \rho: \bS \to \tilde \bS$ is the lift of $\rho$, $\tilde
\sigma : \bT \to \tilde \bT$ the lift of $\sigma$ and $\btf := \tilde
\sigma \boldf {\tilde \rho}^{-1}$.  
For each $a \in A$ and $b := \sigma(a) \in B$ we have 
the induced isomorphism $\rho_a:   |\boldf|^{-1}(a) \to 
|\tilde \boldf|^{-1}(b)$ between
the set-theoretic preimages. We require the existence of the lifting square
\begin{equation}
\label{7my den na chalupe}
\xymatrix@C=3em{
\boldf^{-1}(a) \ar@{~>}[d]   \ar@{-->}[r]^{\tilde {\rho_a}}
& 
{\btf}^{-1}(b) \ar@{~>}[d] 
\\
|\boldf|^{-1}(a)  \ar[r]^{{\rho_a}}_\cong    & {|\btf|}^{-1}(b)
}
\end{equation}
where ${\btf}^{-1}(b)$ is the fiber of $\btf$ over $b \in B$.
\end{definition}

\begin{nonexample}
\label{tyden po vysetreni}
The thick operadic category ${\mathbf 1}$
with one object ${\Large\bullet}$ of cardinality $\{1\}$ is
not cloven -- there is no lift of the isomorphism $\sigma$ in the
diagram
\[
\xymatrix@C=4em{\bullet \ar@{-->}[r]^{\tilde \sigma} \ar@{~>}[d]& ?  \ar@{~>}[d]
\\
\{1\} \ar[r]^\sigma_\cong &\ \{x\}
}
\]
if $x \not= 1$. All examples of non-cloven thick operadic categories we know
are of this or similar ``synthetic'' type.
\end{nonexample}

\begin{example}
\label{Pisu na Safari.}
The thick operadic category $\Fin$, with the cleavage given by $\tilde
\sigma : = \sigma$ for any isomorphism in $\Fin$, is cloven. 
This follows from the simple fact that the upper and bottom squares
in~\eqref{Ve ctvrtek letim do Dvora.} are the same, so the same are
also the upper and bottom horizontal maps in~\eqref{7my den na chalupe}.
Typical examples of thick cloven operadic categories are provided by
the categories of (unlabeled, non-oriented) graphs presented in Section~\ref{co tri dny}.
\end{example}

\begin{definition}
\label{Zase je to nejiste.}
A standard, ``thin'' operadic category $\ttO$ as
in~\cite[Section~1]{duodel} is {\/\em cloven\/} if the cardinality
functor $\crd : \ttO \to \sFin$ is cloven compatibly with the fiber
functor in the sense analogous to that in Definition~\ref{8
  dni na chalupe!}. Explicitly, every automorphisms (permutations) $\rho :
\underline m \to \underline m$,  $\sigma : \underline n \to \underline n$
together with objects $S,T \in \ttO$ with 
$|S| = \underline m$,  $|T|=\underline n$, that are
connected by a morphism $f:S \to
T$, determine the diagram  
\begin{equation}
\label{Zitra se budeme divat na manzele Stodolovy.}
\xymatrix@C=2.3em@R=1.5em{& 
\ar@{-->}[rr]^{\tilde f} \ar@{~>}'[d][dd] \tilde S&& \tilde T \ar@{~>}[dd]
\\
S \ar@{-->}[ur]^{\tilde \rho}  \ar[rr]^(.6)f  \ar@{~>}[dd] && 
T \ar@{-->}[ur]^{\tilde
  \sigma}  
\ar@{~>}[dd]
\\
&\underline m \ar@{-->}'[r]^(.7){|\tilde f|}[rr] && \underline n
\\
\underline m \ar[rr]^{|f|}\ar[ur]^\rho_\cong  && \underline n \ar[ur]^\sigma_\cong
}
\end{equation}
analogous to~\eqref{Ve ctvrtek letim do Dvora.}.
We have for, each $i \in \underline
n$ and $j := \sigma(i)$, 
the canonical induced isomorphism  \hbox{$\rho_i:   |f|^{-1}(i) \to 
|\tilde f|^{-1}(j)$} between the pullbacks and
require the existence of the lifting diagram
\begin{equation}
\label{Zitra pojedeme dale.}
\xymatrix@C=3em{
f^{-1}(i) \ar@{~>}[d]   \ar@{-->}[r]^{\tilde {\rho_i}}
& 
{\tilde f}^{-1}(j) \ar@{~>}[d] 
\\
|f|^{-1}(i)  \ar[r]^{{\rho_i}}_\cong    & \ {|\tilde f|}^{-1}(j)\ .
}
\end{equation}
\end{definition} 

\begin{example}
The skeletal category $\sFin$ is cloven, 
as is the category $\Gr$ of
graphs and its modifications listed in diagram (37)
of~\cite{kodu}. Operadic categories of graphs are addressed in detail
in Section~\ref{co tri dny} of this article. 
\end{example}

\begin{nonexample}
\label{Tohle pisu 28.12. 2024 v Mercine.}
Recall that a standard, thin operadic category $\ttO$ is {\em ordered\/} if
the cardinality functor $|\dash| : \ttO \to \sFin$ factorizes through
the subcategory $\Delta \subset \sFin$ of finite ordinals and their
order-preserving maps. Non-unary ordered thin operadic categories,
such as
$\Delta$ itself, serve as generic examples of non-cloven thin operadic
categories. Indeed, take $T \in \ttO$ such that $|T| =
\underline n$, $n \geq 2$, and an automorphism $\sigma :  \underline n
\to \underline n$ which does not preserve order. Its lift
$\tilde \sigma$ of must satisfy $|\tilde \sigma| = \sigma$,
which is not possible, since $\sigma$ does not preserve the order of
$\underline n$. Below we give two examples of non-ordered thin operadic 
categories which are not cloven.
\end{nonexample}

\begin{nonexample}
\label{za necele tri hodiny}
We claim that 
Batanin's thin operadic category $\Omega_k$ of $k$-trees~\cite[\S 1.1]{duodel} is
cloven only if  $k =0$, 
and ordered only if $k \leq 1$. Indeed, the category $\Omega_0$ is the terminal
unary operadic category {\tt 1}, therefore (trivially) ordered and cloven. The
category $\Omega_1$ equals $\Delta$, which is ordered, therefore not
cloven by non-Example~\ref{Tohle pisu 28.12. 2024 v Mercine.}.

Let us analyze $\Omega_2$. 
Its objects are morphisms of $\Delta \subset \sFin$, 
i.e.~order-preserving maps $T: \underline m \to \underline
n$. Morphisms \hbox{$F: T' \to T''$} of objects of $\Omega_2$  
are commutative diagrams in the category of sets
\begin{equation}
\label{za 3 hodiny prohlidka}
\xymatrix@C=4em{
\underline m'  \ar[r]^\omega \ar[d]_{T'}   & \underline m'' \ar[d]_{T''} 
\\
\underline n' \ar[r]^\varsigma   & \underline n''
}
\end{equation}
such that 
\setlength{\leftmargini}{3em}
\begin{itemize}[topsep=0pt, partopsep=0pt, itemsep=0pt]
\item[(i)]
$\varsigma$ is order-preserving and
\item[(ii)]
for each $i \in \underline n'$, the restriction of $\omega$ to the
preimage $\inv{T'}i$ is order-preserving.
\end{itemize}
The cardinality of $T: \underline m \to \underline n  \in \Omega_2$ is $\underline
m\in \sFin$, and the morphism $F$ represented by~\eqref{za 3 hodiny prohlidka}
is mapped by the cardinality functor to $\omega : \underline m' \to \underline m''$. 
The objects of $\Omega_2$ can be viewed as trees with two
levels of vertices, whence the name.
For instance, the $2$-tree $T: \underline 4 \to \underline 2$ with
$T(1) = T(2) := 1$, $T(3) = T(4) := 2$ of cardinality 
$\underline 4$ is depicted as
\begin{center}
\includegraphics{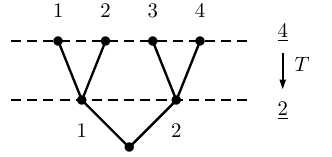}
\end{center}
It is easy to see that the automorphism $\sigma : \underline 4 \to
\underline 4$ given by $\sigma(1) := 2$, $\sigma(2) := 1$, $\sigma(3)
:= 4$ and $\sigma(4) := 3$ does not admit a lift, so $\Omega_2$ is not
cloven. To show that $\Omega_2$ is not ordered, consider a $2$-tree $S:
\underline 2 \to \underline 1$ and a morphism $F : T \to S$ given by
the morphism $\omega : \underline 4 \to \underline 2$ of leaves 
with $\omega(1) := 1$,
$\omega(2) := 2$, $\omega(3) := 1$ and
$\omega(2) := 2$. The map $|F| = \omega$ is not order-preserving. The
categories $\Omega_k$, $k \geq 3$, can be discussed similarly.
\end{nonexample}

\begin{nonexample}
\label{Dnes_na_muslicky.} 
  The thin operadic category $\Vines$ of vines  has the same objects as $\sFin$
  but morphisms $\underline m\to \underline n$ are isotopy classes of merging
  descending strings in $\mathbb{R}^3$ as in
\begin{center}
\includegraphics{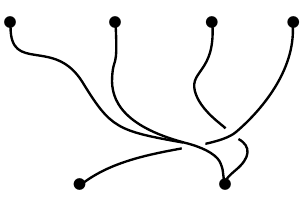}
\end{center}
cf.~\cite[Example~1.4]{env}. The cardinality $|\dash|: \Vines \to
\sFin$ is the identity on objects and maps $i \in \underline m$ to the
element of $\underline n$ connected with $i$ by a string.
It follows immediately from the definition that the subcategory of
automorphism of objects of cardinality $\underline n$ equals the
braided group ${\rm Br}_{n}$ on \hbox{$n$} elements. A cleavage thus
must be a functorial section  of the
projection $ {\rm Br}_{n}\twoheadrightarrow \Sigma_{n}$ of ${\rm
  Br}_{n}$ to the symmetric
group. Sections exist, but the functoriality cannot be achieved if $n
\geq 2$. This example was suggested to me by M.~Batanin.
\end{nonexample}

We have seen that there are two issues that can prevent the existence of a
cleavage. Either some automorphisms have no lifts as in
non-Examples~\ref{Tohle pisu 28.12. 2024 v Mercine.} and~\ref{za
  necele tri hodiny}, or the functoriality of lifts cannot be
achieved, as in non-Example~\ref{Dnes_na_muslicky.}.

\section{Cloven unitality}
\label{V sobotu jdu s Jarkou na Vystaviste.}

This section introduces the concept  of unitality for thick cloven
operadic categories. The following definition is to be compared with 
Definition~\ref{Poleti se zitra?}, which recalls the
unitality for standard, thin operadic categories.

\begin{definition}
\label{Pisu v Mercine}
Assume that  a family 
\begin{equation}
\label{Jsem posledni den v Cuernavace - uz je to dlouho.}
\big\{\bU_c \in \bO \ | \ c \in \pi_0(\bO) \big\}
\end{equation}
of local terminal objects of a thick operadic category $\bO$ 
is given, with $\bU_c$
belonging to the connected component~$c$, and such that $|\bU_c| =
\{1\}$ for each $c$.

The category  $\bO$ is {\em
  left unital\/} if, for each $\bT \in \bO$ with $|\bT| = \{\rada 1n\}$, the fiber
$\id_\bT^{-1}(1)$  of the identity $\id_\bT : \bT \to \bT$ 
equals $\bU_c$ with some $c \in \pi_0(\bO)$.  The category $\bO$ is {\em right
  unital\/} if the fiber functor ${\rm Fib}_1 : \bO/\bU_c \to \bO$ is the domain functor
for each $c \in \pi_0(\bO)$. The category $\bO$ is {\em unital\/} if it
is both left and right unital. 
\end{definition}

Let $\bO$ be a thick cloven operadic category 
equipped with a family~(\ref{Jsem posledni den v Cuernavace - uz je to
  dlouho.}). For each singleton
$\{x\}$ define $\bU^{\ \{x\}}_c$ as the target of the lift of the unique
isomorphism $\{1\} \to \{x\}$ in the lifting square
\begin{equation}
\label{remeslnici na chalupe}
\xymatrix@C=3em{\bU_c \ar@{~>}[d]\ar@{-->}[r]& 
\bU^{\ \{x\}}_c \ar@{~>}[d]
\\
\{1\} \ar[r]^\cong & \ \{x\}.
}
\end{equation}
Since $\bU^{\ \{x\}}_c$ is isomorphic to $\bU_c$, it is local terminal too. We
thus have, for each $c \in \pi_0(\bO)$, an action of the big groupoid
$\mathfrak G$ of
one-point sets on the local terminal objects in the connected
component~$c$. Notice that $\bU^{\ \{1\}}_c = \bU_c$

\begin{proposition}
\label{Prohlidka 30. prosince se blizi.}
Let\, $\bO$ be a thick cloven operadic category. 
If\, $\bO$ is left unital,
the fiber functor ${\it Fib}_{\{x\}} : \bO/\bU^{\ \{x\}}_c \to \bO$ is the domain
  functor for each $c \in \pi_0(\bO)$ and a singleton $\{x\}$. 
If\, $\bO$ is right unital, 
the fiber $\id_\bS^{-1}(x)$  of the identity $\id_\bS : \bS \to \bS$  
over $x\in |\bS|$ equals $\bU^{\ \{x\}}_{c_x}$ with some $c_x \in \pi_0(\bO)$, 
for each $\bS \in \bO$.

More generally, if\/ $\bO$ is right unital, 
$\sigma : X\to Y$ be an isomorphism of finite sets
and $\tilde \sigma : \bS \to \tilde \bS$ the lift in the lifting square
\[
\xymatrix@C=3em{ \bS \ar@{~>}[d]\ar@{-->}[r]^{\tilde \sigma}& 
\tilde \bS \ar@{~>}[d]
\\
X\ar[r]^{\sigma}_\cong & \ Y ,
}
\]
then, for each $x \in X$ and $y := \sigma(x)$, one has ${\tilde
  \sigma}^{-1}(y)= \inv{\id_{\bS}}x = \bU^{\   \{x\}}_{c_x}$ 
with some $c_x \in \pi_0(\bO)$. 
\end{proposition}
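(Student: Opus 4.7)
My plan is to reduce both assertions, together with the ``more generally'' clause, to a single mechanism: the compatibility square~\eqref{7my den na chalupe} of Definition~\ref{8 dni na chalupe!} transports the unitality hypotheses --- which directly constrain only objects of cardinality $\{1\}$ or $\{\rada 1n\}$ --- to objects of arbitrary cardinality and to arbitrary singleton indices. The constant across all three steps is the observation that whenever the base bijection in~\eqref{7my den na chalupe} is literally an identity, functoriality of the cleavage forces its lift to be an identity too, collapsing the lifting square into an honest equality of objects.

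For the claim that ${\rm Fib}_{\{x\}}: \bO/\bU^{\ \{x\}}_c \to \bO$ is the domain functor, I start with an arbitrary $\boldf: \bS \to \bU^{\ \{x\}}_c$. Writing $\lambda: \bU_c \to \bU^{\ \{x\}}_c$ for the cleavage lift of $\{1\} \to \{x\}$ introduced in~\eqref{remeslnici na chalupe}, functoriality of the cleavage makes $\lambda^{-1}$ the cleavage lift of the inverse isomorphism $\{x\} \to \{1\}$. I instantiate Definition~\ref{8 dni na chalupe!} with $\rho = \id_{|\bS|}$ on the source and $\sigma = |\lambda^{-1}|$ on the target; the upper face of~\eqref{Ve ctvrtek letim do Dvora.} then yields $\btf = \lambda^{-1}\circ \boldf: \bS \to \bU_c$. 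Specialising~\eqref{7my den na chalupe} to $a = x$ and $b = 1$, the induced base bijection $\rho_x: |\boldf|^{-1}(x) \to |\btf|^{-1}(1)$ is the identity of $|\bS|$, so its cleavage lift is also the identity, which gives $\boldf^{-1}(x) = \btf^{-1}(1)$. The unitality assumption that ${\rm Fib}_1$ on $\bO/\bU_c$ is the domain functor now identifies $\btf^{-1}(1)$ with $\bS$, and running the same construction on morphisms of $\bO/\bU^{\ \{x\}}_c$ upgrades this to a functor equality.

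For the claim that $\inv{\id_\bS}{x}$ equals some $\bU^{\ \{x\}}_{c_x}$, I fix $\bS \in \bO$, $x \in |\bS| =: X$ and choose a bijection $\rho: X \to \underline n$ (with $n = |X|$) sending $x$ to $1$. The cleavage lift $\tilde\rho: \bS \to \tilde\bS$ produces $\tilde\bS$ with $|\tilde\bS| = \underline n$. Applying Definition~\ref{8 dni na chalupe!} to $\id_\bS$ with source and target isomorphism both equal to $\rho$, functoriality of the cleavage forces the top row of~\eqref{Ve ctvrtek letim do Dvora.} to be $\tilde\rho \circ \id_\bS \circ \tilde\rho^{-1} = \id_{\tilde\bS}$. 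The unitality assumption that fibers of identity automorphisms over $1$ are local terminal (applied to $\tilde\bS$, whose cardinality is $\underline n$) delivers $\inv{\id_{\tilde\bS}}{1} = \bU_c$ for some $c \in \pi_0(\bO)$. Specialising~\eqref{7my den na chalupe} to $a = x$, $b = 1$ then produces a cleavage lift of $\{x\} \to \{1\}$ with target $\bU_c$; by the uniqueness part of Definition~\ref{Pojedeme na tu Sumavu?}, this lift must be the one inverse to~\eqref{remeslnici na chalupe}, so its source is $\bU^{\ \{x\}}_c$, and $\inv{\id_\bS}{x} = \bU^{\ \{x\}}_c$ with $c_x := c$.

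For the ``more generally'' statement, I apply Definition~\ref{8 dni na chalupe!} to $\id_\bS: \bS \to \bS$ with source isomorphism $\id_X$ and target isomorphism $\sigma: X \to Y$; the lifted morphism is $\widetilde{\id_\bS} = \tilde\sigma \circ \id_\bS \circ \id_\bS^{-1} = \tilde\sigma$. For any $x \in X$ and $y = \sigma(x)$ one has $|\id_\bS|^{-1}(x) = \{x\} = \sigma^{-1}(y) = |\tilde\sigma|^{-1}(y)$, and the induced base bijection in~\eqref{7my den na chalupe} is the identity of $\{x\}$, whose cleavage lift is again the identity; this yields $\inv{\id_\bS}{x} = \inv{\tilde\sigma}{y}$, which combined with the previous paragraph closes the chain of equalities. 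The main obstacle I anticipate is purely clerical: tracking, across the nested lifting squares of~\eqref{Ve ctvrtek letim do Dvora.} and~\eqref{7my den na chalupe}, which bijections of singletons really are identities of the same underlying set and which are genuinely non-trivial bijections that need to be absorbed by invoking functoriality of the cleavage.
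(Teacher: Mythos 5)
Your proof is correct and runs on the same engine as the paper's: specialise the coherence cube of Definition~\ref{8 dni na chalupe!} so that the induced base bijection in~\eqref{7my den na chalupe} becomes an identity, then invoke functoriality of the cleavage (identities lift to identities, targets determine lifts uniquely) to collapse the resulting lifting square into an equality of fibers. The only differences from the paper are cosmetic choices in how the cube is set up: you pass from $\bU^{\ \{x\}}_c$ back to $\bU_c$ whereas the paper lifts $\bU_c \to \bU^{\ \{x\}}_c$, and you handle the two fiber-of-identity claims with a tailored ordinal $\underline n$ rather than the paper's single arbitrary isomorphism $\omega : A \to X$, which gives both at once. One point worth flagging: you use right unitality (${\rm Fib}_1$ is the domain functor) to establish the ${\rm Fib}_{\{x\}}$ claim and left unitality ($\id_\bT^{-1}(1) = \bU_c$) to establish the $\id_\bS^{-1}(x)$ claim; given Definition~\ref{Pisu v Mercine} this is the logically correct pairing, but it is the opposite of what the proposition literally says. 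The paper's own proof invokes the hypotheses exactly as you do, so the adjectives ``left'' and ``right'' in the statement appear to be interchanged by a typo which your argument silently corrects.
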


\begin{proof}
Suppose that $\bO$ is left unital and 
let $! : \bS \to \bU^{\ \{x\}}_c$ resp.~ $! : \bS \to \bU_c$ be the unique
maps to the appropriate local terminal objects.  They determine the upper square of
the diagram
\begin{equation}
\label{Nemohu se tam dovolat.}
\xymatrix@C=2em@R=1.2em{& 
\ar@{-->}[rr]^{!} \ar@{~>}'[d][dd]\bS&&
\bU_c^{\ \{x\}} \ar@{~>}[dd]
\\
\bS \ar@{-->}[ur]^{\id_\bS} \ar[rr]^(.6){!}  \ar@{~>}[dd] && 
\bU_c \ar@{-->}[ur]
\ar@{~>}[dd]
\\
&X \ar@{-->}'[r]^(.7){{!}}[rr] &&  \{x\}
\\
X \ar[rr]^{!}\ar[ur]^{\id_X}  && \ar[ur]^{!}
\{1\}
}
\end{equation} 
of the obvious maps and their lifts. 
The left face of~\eqref{Nemohu se tam dovolat.} is a
lifting square since cleavage preserves identities, and the right face
is a lifting square by the definition of $\bU^{\ \{x\}}_c$.
The associated lifting square~\eqref{7my den na chalupe} is
\[
\xymatrix@C=3em{{!}^{-1}(1) \ar@{~>}[d]\ar@{-->}[r]^{\tilde {\id_X}}& 
{!}^{-1}(x)\ar@{~>}[d]
\\
X\ar[r]^{\id_X} & \ X .
}
\]
Since the lift of the identity automorphism is the 
identity automorphism, ${!}^{-1}(1) = {!}^{-1}(x)$.  
Moreover, $!^{-1}(1) = \bS$ by the left unitality.
This proves the first part of the proposition.

Assume that $\bO$ is right unital.
Consider an arbitrary isomorphism of finite sets $\omega : A \to X$. 
The lift of the inverse of $\omega$ gives the
left square of the diagram
\begin{equation}
\label{Mam smlouvu az do konce roku 2027.}
\xymatrix@C=2em@R=1.2em{& 
\ar@{-->}[rr]^{\tilde \sigma} \ar@{~>}'[d][dd] \bS&&
\tilde \bS \ar@{~>}[dd]
\\
\bT \ar@{-->}[ur]^{\tilde \omega}  \ar[rr]^(.6){\id_\bT}  \ar@{~>}[dd] && 
\bT \ar@{-->}[ur]^{\tilde {\sigma
  \omega}}  
\ar@{~>}[dd]
\\
&X \ar'[r]^(.7){{\sigma}}[rr] &&Y
\\
A \ar[rr]^{\id_A}\ar[ur]^\omega  && \ A\, . \ar[ur]^{\sigma\omega} 
}
\end{equation}
Given $a \in A$, $x := \omega(a)$ and $y := \sigma(x)$, diagram~\eqref{Mam
  smlouvu az do konce roku 2027.} 
induces the lifting square
\begin{equation}
\label{Zitra jdeme s Jarkou na operu.}
\xymatrix@C=3em{ \ar@{~>}[d]\ar@{-->}[r] \inv{\id_\bT}a & \tilde \sigma^{-1}(y)
 \ar@{~>}[d]
\\
\{a\} \ar[r]^\cong & \ \{y\}.
}
\end{equation}
Choosing $A := \{\rada 1n\}$ and $\omega :  \{\rada 1n\} \to X$ an
arbitrary map such that $\omega(1) = x$, diagram~\eqref{Zitra jdeme s
  Jarkou na operu.} with $a := 1$ gives
\begin{subequations}
\begin{equation}
\label{drevene auticko}
\xymatrix@C=3em{ \ar@{~>}[d]\ar@{-->}[r] \bU_c := \inv{\id_\bT}1  & \tilde \sigma^{-1}(y)
 \ar@{~>}[d]
\\
\{1\} \ar[r]^\cong & \ \{x\} ,
}
\end{equation}
so $\tilde \sigma^{-1}(y) = \bU_c^{\ \{x\}}$ by~\eqref{remeslnici na chalupe}.
Likewise, diagram~\eqref{Mam smlouvu az do konce roku 2027.} with $\bT
= \bS$, $A = X$ and $\omega = \id_X$ induces, for $a =x \in X$ and $y := \sigma(x)$,
\begin{equation}
\label{Musim se donutit jit si zabehat. Snad koleno vydrzi.}
\xymatrix@C=3em{ \ar@{~>}[d]\ar@{-->}[r] \inv{\id_{\tilde \bS}}x & \tilde \sigma^{-1}(y)
 \ar@{~>}[d]
\\
\{x\} \ar[r]^\id & \ \{x\}.
}
\end{equation}
\end{subequations}
Diagrams~\ref{drevene auticko} and~\eqref{Musim se donutit jit si
  zabehat. Snad koleno vydrzi.} together imply the rest of the proposition.
\end{proof}

Recall an important class of morphisms of a standard thin  
unital operadic category, formed by   {\/\em
  quasibijections\/}, which are morphisms
whose all fibers are the chosen local terminal
objects~\hbox{\cite[Section~2]{env}}. 
A similar definition makes sense also in the thick case, with the role of the
chosen local terminal objects played by $\bU^{\ \{x\}}_c$ for $c \in
\pi_0(\bO)$ and singletons $\{x\}$. Proposition~\ref{Prohlidka
  30. prosince se blizi.} has an obvious

\begin{corollary}
Lifts of isomorphisms in a cloven unital operadic category are quasibijections.
\end{corollary}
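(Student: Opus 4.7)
The plan is to observe that the corollary is essentially an immediate re-reading of the second paragraph of Proposition \ref{Prohlidka 30. prosince se blizi.} through the definition of a thick quasibijection. No new construction is needed.

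First I would unwind the relevant definition: a morphism $\boldf$ of a thick cloven unital operadic category $\bO$ is a quasibijection precisely when every fiber $\boldf^{-1}(z)$ is one of the chosen local terminal objects $\bU^{\ \{x\}}_c$, indexed by $c \in \pi_0(\bO)$ and a singleton $\{x\}$, as stipulated in the paragraph preceding the corollary.

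Next, let $\tilde \sigma : \bS \to \tilde \bS$ be the lift, furnished by the cleavage, of an isomorphism $\sigma : X \to Y$ of finite sets with $X = |\bS|$ and $Y = |\tilde \bS|$. Because $\bO$ is unital, it is in particular right unital, so the second, "more general'' paragraph of Proposition \ref{Prohlidka 30. prosince se blizi.} applies verbatim. It tells us that, for each $x \in X$ with $y := \sigma(x) \in Y$, one has
\[
\tilde \sigma^{-1}(y) \ = \ \bU^{\ \{x\}}_{c_x}
\]
for some $c_x \in \pi_0(\bO)$. Since $\sigma$ is a bijection, every $y \in Y$ is of the form $\sigma(x)$ for a unique $x \in X$, hence \emph{every} fiber of $\tilde \sigma$ is one of the distinguished local terminal objects $\bU^{\ \{x\}}_{c_x}$.

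By the definition recalled in the first step, this is exactly the assertion that $\tilde \sigma$ is a quasibijection, completing the proof. There is no substantial obstacle here; the only subtlety worth flagging is that in the thick setting the role of "the'' chosen local terminal object is played by the whole $\mathfrak G$-orbit $\{\bU^{\ \{x\}}_c\}_{\{x\}}$ introduced via the lifting square \eqref{remeslnici na chalupe}, and Proposition \ref{Prohlidka 30. prosince se blizi.} is precisely what guarantees that the fibers of $\tilde \sigma$ land in this orbit and not merely in some isomorphic local terminal object.
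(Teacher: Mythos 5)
Your proof is correct and matches the paper's intent exactly: the paper marks the corollary as an ``obvious'' consequence of Proposition~\ref{Prohlidka 30. prosince se blizi.}, and your argument is precisely the unwinding of that proposition (its second, right-unital part) against the thick definition of quasibijection. The only thing you could tighten is that the second part of the proposition already gives ${\tilde\sigma}^{-1}(y)=\bU^{\ \{x\}}_{c_x}$ for every $y$ in the target (since every $y$ is $\sigma(x)$ for a unique $x$), so the bijectivity remark is just making that bookkeeping explicit.
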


\section{Operads}

Operadic categories which  we have discussed so far form the initial stage
of the triad
\begin{equation}
\label{Musim jit na druhy odber.}
\xymatrix{
\boxed{\ \hbox {\ operadic categories}} \ \ar@{=>}[r] 
&
\ \boxed{\hbox {\ operads}} \ \ar@{=>}[r]
&
\boxed{\hbox {algebras}} 
}
\end{equation}
in which ``$A \Longrightarrow B$'' must be read as \ ``$A$ governs
$B$.'' In this section we discuss the next one.

We start by notation that will simplify future exposition.
Let $\bO$ be a cloven thick operadic category and  $\boE =
\{\boE(\bT)\}_{\bT \in \bO}$ a collection  of objects of a
symmetric monoidal category $\ttV$. We say that $\boE$ is a {\/\em
  cloven module\/} if the lifts $\tilde \sigma : \bS \to \bT$ of
isomorphisms $\sigma : X \to Y$ in the lifting square
\begin{equation}
\label{Nemohu najit tu propisovacku.}
\xymatrix@C=4em{\bS \ar@{-->}[r]^{\tilde \sigma}_\cong \ar@{~>}[d]& \bT  \ar@{~>}[d]
\\
X \ar[r]^\sigma_\cong &\ Y
}
\end{equation} 
act contravariantly and   
functorially on $\boE$ via isomorphisms
$\boE(\sigma) : \boE(\bT) \stackrel\cong\longrightarrow \boE(\bS)$.

For a morphism $\boldf : \bS \to \bT$ with fibers $\{ \bF_{\!a} \}_{a \in
  |\bT|}$, we denote by $\boE(\boldf)$ the ``unordered tensor product'' 
$\bigotimes_{a \in |\bT|} \boE(\bF_{\!a})$ given by the colimit
in~\cite[Definition~I.1.58]{markl-shnider-stasheff:book}. 
In the situation of diagram~\eqref{Ve ctvrtek letim do Dvora.} we have 
a~natural isomorphism
\begin{equation}
\label{Az 30. prosince se to dozvim.}
\boE(\rho) :=  \bigotimes_{a \in A} \boE(\rho_a):
\boE(\btf) \stackrel\cong\longrightarrow \boE(\boldf),
\end{equation}
where $\rho_a : |\boldf|^{-1}(a) \to  |\btf|^{-1}(\sigma(a))$ is
the induced isomorphism between the set-theoretic preimages. Regarding
notation, we hope that there
will be no confusion between \hbox{$\boE(\sigma) : \boE(\bT) \to
\boE(\bS)$} and the morphism in~\eqref{Az 30. prosince se to dozvim.};
the meaning will always be clear from the context.

\begin{definition}
\label{Na Safari oper strasne pocasi.}
Let $\bO$ be a thick cloven operadic category as in Definition~\ref{8
  dni na chalupe!}.  A (nonunital) {\/\em operad\/} is
a cloven module  $\boP = \{\boP(\bT)\}_{\bT \in \bO}$ with the
composition law
\begin{equation}
\label{structure operations}
\bmu_\boldf : \boP(\bT)\ \ot\ \boP(\boldf)
\longrightarrow \boP(\bS)
\end{equation}
specified for each morphism $\boldf : \bS \to \bT$ and subject to the
following properties.
\setlength{\leftmargini}{2em}
\begin{itemize}[topsep=0pt, partopsep=0pt, itemsep=0pt]
\item[(i)] 
Associativity. Let $\bT \stackrel \boldf\to \bS \stackrel \bg\to \bR$ be morphisms in
$\bO$, and $\bh := \bg \boldf : \bT \to \bR$ as in~(\ref{Kure}).  Then the
following diagram of composition laws of $\boP$ combined with the
canonical isomorphisms of iterated products in $\ttV$ commutes:
\begin{subequations}
\begin{equation}
\label{Dnes si pujdu koupit tri venecky a slehacku.}
\xymatrix@C = 2em@R = .4em{
\ar@/^2.5ex/[rrd]^(.56){\id \ \ot \bigotimes_{r}\bmu_{\boldf_r}}
      \ar[dd]_(.45){\bmu_\bg  \ot \id}
\displaystyle\bigotimes_{r
        \in |\bR|} 
    \boP(\bR) \ot  \boP(\bg) \ot  \boP(\boldf_r) & &    
      \\  & &  \ar@/^/[dl]_{\bmu_\bh}
    \boP(\bR) \ot  \boP(\bh)\ .
\\
      \ar[r]^(.77){\bmu_\boldf}{\rule{0pt}{2em}}   \boP(\bS) \ot
\displaystyle\bigotimes_{r \in |\bR|}
      \boP(\boldf_r)  \cong    \boP(\bS) \ot \boP(\boldf)&
 \boP(\bT)&
    }
\end{equation}
The isomorphism $\bigotimes_{r \in |\bR|} \boP(\boldf_r) \cong
\boP(\boldf)$ used in the bottom line holds by~\eqref{karneval_u_retardacku}. 

\item[(ii)]  
Compatibility with the action. In the situation
 of~\eqref{Ve ctvrtek letim do Dvora.}, 
we require the commutativity of
\begin{equation}
\label{Vcera jsem byl se Sambou v Krkonosich.}
\xymatrix@C=4em{\boP(\bT)\ \ot\ \boP(\boldf)
\ar[r]^(.6){{\bmu_\boldf}} \ar@{<-}[d]_{\boP(\sigma)\ \ot\ \boP(\rho)}
& \boP(\bS) \ar[d]^{\boP(\rho)}
\\
\boP(\tilde \bT) \ot  \boP(\btf)
\ar[r]^(.6){{\bmu_{\btf}}} & \boP(\tilde\bS).
}
\end{equation}
\end{subequations}
\end{itemize}
\end{definition}

The following definition assumes 
that $\bO$ is a cloven thick unital operadic category with
collection~\eqref{Jsem posledni den v Cuernavace - uz je to dlouho.}
of chosen local terminal objects. Each family of morphisms
\begin{subequations}
\begin{equation}
\label{Pujdu koupit orisky.}
\{\eta_c : \unit
\longrightarrow \boP(\bU_c) \ | \ c \in \pi_0(\bO)\},
\end{equation}
where $\unit$ is the monoidal unit of $\ttV$, generates the morphisms
\begin{equation}
\label{Poletime zitra nebo v nedeli?}
\bha_c^{\{x\}} :  
\unit \stackrel {\eta_c}\longrightarrow \boP(\bU_c) \stackrel\cong\longrightarrow 
\boP(\bU^{\ \{x\}}_c) 
\end{equation}
\end{subequations}
specified for each $c \in \pi_0(\bO)$ and a singleton $\{x\}$; 
the isomorphism $\boP(\bU_c) \stackrel\cong\to
\boP(\bU^{\ \{x\}}_c)$  is given by the cloven module action of $\bU_c \to \bU_c^{\,
  x}$ in~\eqref{remeslnici na chalupe}. Notice that $\bha_c^{\{x\}} = \eta_c$.

\begin{definition}
\label{Nekde se toula.}
An operad $\boP$ as in Definition~\ref{Na Safari oper strasne pocasi.}
is {\/\em unital\/} if it is equipped with morphisms~\eqref{Pujdu koupit orisky.} and the following conditions
are satisfied for each $\bT \in \bO$.
\setlength{\leftmargini}{2em}
\begin{itemize}[topsep=0pt, partopsep=0pt, itemsep=0pt]
\item [(i)]
The diagram
\[
\xymatrix{\boP (\bU_c) \ot \boP(\bT) \ar[r]^(.6){\bmu_!} &
  \boP(\bT)
\ar@{=}[d]
\\
\unit \ \ot \boP(\bT) \ar[u]^{\eta_c \ot \id} \ar[r]^(.56)\cong &\boP(\bT),
}
\]
in which $\bmu_!$ is the operation associated to the
unique morphism $! : \bT \to \bU_c$, commutes.
\item [(ii)]
The diagram
\begin{equation}
\label{Zitra jdu na prohlidku.}
\xymatrix{\boP(\bT) \otimes \bigotimes_{c_x}    
\boP (\bU^{\ \{x\}}_{c_x})  \ar[r]^(.69){\bmu_{\id}} &
  \boP(\bT)
\ar@{=}[d]
\\
\boP(\bT)\ \ot \ \unit^{\ot |\bT|} \ar[u]^{\id \ot 
\bigotimes_{c_x} \bha^{\{x\}}_{c_x}} \ar[r]^(.6)\cong &\ \boP(\bT),
}
\end{equation}
where $\bU_{c_x}^{\ \{x\}}$ is the 
fiber of \/ $\id_\bT : \bT \to \bT$ over $x$, and $x$ runs over the
set $|\bT|$, commutes.
\end{itemize}
\end{definition}

It follows from the compatibility with the
cloven module action required in (ii) of Definition~\ref{Na Safari oper strasne
    pocasi.}  that it suffices to
  verify  the properties (i) and (ii) only for $\bT \in \bO$
with $|\bT| = \{\rada 1n\}$, $n \geq 1$. The compatibility 
also implies that a stronger version of the diagram in (i)
of Definition~\ref{Nekde se toula.} holds, namely
\begin{equation}
\label{Dnes jdeme na operu.}
\xymatrix{\boP (\bU_c^{\ \{x\}}) \ot \boP(\bT) \ar[r]^(.62){\bmu_!} &
  \boP(\bT)
\ar@{=}[d]
\\
\unit \ \ot \boP(\bT) \ar[u]^{\bha_c^{\{x\}} \ot \id} \ar[r]^(.56)\cong &\boP(\bT),
}
\end{equation}
where $!: \bT \to \bU^{\ \{x\}}_c$ is the unique morphism to a local
terminal object.
To verify the commutativity of~\eqref{Dnes jdeme na operu.}, notice that (ii) of
Definition~\ref{Na Safari oper strasne pocasi.} applied to  
\[
\xymatrix@C=1.8em@R=1.4em{& 
\ar@{-->}[rr]^{!} \ar@{~>}'[d][dd] \bT &&
 \ar@{~>}[dd] \bU_c^{\ \{x\}}
\\
\bT \ar@{-->}[ur]^{\id_\bT}  \ar[rr]^(.6){!}  \ar@{~>}[dd] && 
\bU_c \ar@{-->}[ur]^{!}  
\ar@{~>}[dd]
\\
&|\bT|\ar@{-->}'[r]^(.7){!}[rr] &&\, \{x\}.
\\
|\bT| \ar[rr]^{!}\ar[ur]^{\id_{|\bT|}}  &&\  \{1\} \ar[ur]^{!}
}
\]
implies the commutativity of the square in 
\[
\xymatrix@R=1em{
& \boP(\bU_c) \ot \boP(\bT)  \ar[rr]^(.6){\bmu_!}   && \boP(\bT)
\\
\unit \ot \boP(\bT) \ar@/^1pc/[ur]^{\eta_c \ot \id}\ar@/_1pc/[dr]^{\bha_c^{\{x\}} \ot \id}
\\
& \boP(\bU_c^{\ \{x\}}) \ot \boP(\bT) \ar[rr]^(.6){\bmu_!}  
\ar[uu]_{\boP(!) \ot \id} 
&&\, \boP(\bT) , \ar@{=}[uu]
}
\]
while the triangle is commutative by the definition of $\bha_c^{\{x\}}$.

It can be proven that the cloven module action of a unital operad
$\boP$ over a thick unital operadic category $\bO$ is related with
family~\eqref{Poletime zitra nebo v nedeli?}
via the diagram
\begin{equation}
\label{Pozitri jedeme no Mercina na Vanoce.}
\xymatrix{
\boP(\bT) \ot \bigotimes_{x \in X} \boP(\bU^{\ \{x\}}_{c_x})
\ar[r]^(.7){\bmu_{\tilde \sigma}} & \boP(\bS)
\\
\boP(\bT) \ot  \bigotimes_{x \in X} \unit 
\ar[u]^{\id \ot \bigotimes \bha^{\{x\}}_{c_x}} \ar[r]^(.6)\cong
& \boP(\bT) \ar[u]_{\boP(\sigma)},
}
\end{equation}
where $\tilde \sigma: \bS \to \bT$ is as in~(\ref{Nemohu najit tu
  propisovacku.}), $\bU_{c_x}^{\ \{x\}}$ is the 
fiber of \/ $\id_\bS : \bS \to \bS$ over $x$, which equals the fiber
$\inv{\tilde \sigma}y$ over $y := \sigma(x)$ by the second part of
Proposition~\ref{Prohlidka 30. prosince se blizi.}, 
and where $x$ runs over $X$.

Morphisms~\eqref{Poletime zitra nebo v nedeli?} 
were generated from the unit maps~(\ref{Pujdu koupit orisky.}) using the cloven module structure of $\oP$. The following
proposition shows that, vice versa, the
module structure of $\boP$ is determined by a family
\begin{equation}
\label{Jdu tam ve 13:30.}
\bha_c^{\{x\}} :  
\unit \longrightarrow 
\boP(\bU^{\ \{x\}}_c), \ c \in \pi_0(\bO), \ \{x\} \hbox { a sindleton,} 
\end{equation}
of morphisms as in~\eqref{Poletime zitra nebo v nedeli?},
suitably compatible with the structure operations of $\boP$.

\begin{proposition}
\label{Uz mi klici papricky.}
Let $\bO$ be a thick unital operadic category and $\boP = \{\boP(\bT)\}_{\bT \in \bO}$ a 
collection with composition 
laws~\eqref{structure operations} which are associative as required in
(i) of Definition~\ref{Na Safari oper strasne pocasi.}. Assume
moreover that $\boP$ comes with a
family~\eqref{Jdu tam ve 13:30.} satisfying~(\ref{Zitra jdu na
  prohlidku.}) and~(\ref{Dnes
  jdeme na operu.}).  Diagrams~(\ref{Pozitri jedeme no Mercina na
  Vanoce.}) then
define a cloven module action that satisfies (ii) of Definition~(\ref{Na Safari oper strasne pocasi.}).
\end{proposition}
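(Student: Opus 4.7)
The plan is to take diagram~(\ref{Pozitri jedeme no Mercina na Vanoce.}) itself as the \emph{definition} of the morphism $\boP(\sigma) : \boP(\bT) \to \boP(\bS)$ associated to each isomorphism $\sigma : X \to Y$ in $\Fin$ with chosen lift $\tilde\sigma : \bS \to \bT$. By the second half of Proposition~\ref{Prohlidka 30. prosince se blizi.}, the fibres of $\tilde\sigma$ are exactly the objects $\bU^{\ \{y\}}_{c_y}$ (equivalently, the $\bU^{\ \{x\}}_{c_x}$ re-indexed via $\sigma$), so the operation $\bmu_{\tilde\sigma}$ at the top of~(\ref{Pozitri jedeme no Mercina na Vanoce.}) is a legitimate structure map and the displayed composite is a well-defined morphism. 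With this as the definition, I must check (a) that $\sigma \mapsto \boP(\sigma)$ is contravariantly functorial and sends identities to identities, (b) that each $\boP(\sigma)$ is an isomorphism, and (c) the compatibility~(\ref{Vcera jsem byl se Sambou v Krkonosich.}) demanded by (ii) of Definition~\ref{Na Safari oper strasne pocasi.}.

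For (a) and (b), the two unit hypotheses do essentially all the work. The identity case $\boP(\id_X)=\id$ is literally the axiom~(\ref{Zitra jdu na prohlidku.}), since the chosen lift of $\id_X$ is $\id_\bS$, whose fibres are the $\bU^{\ \{x\}}_{c_x}$. For functoriality $\boP(\sigma)\boP(\tau) = \boP(\tau\sigma)$ with a further isomorphism $\tau : Y \to Z$ lifted to $\tilde\tau : \bT \to \bR$, the cleavage functoriality gives $\tilde\tau\tilde\sigma = \widetilde{\tau\sigma}$, and I would apply the associativity axiom~(\ref{Dnes si pujdu koupit tri venecky a slehacku.}) to the chain $\bS \to \bT \to \bR$. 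All induced fibre morphisms $(\tilde\sigma)_z$ are maps between local terminal objects, so by hypothesis~(\ref{Dnes jdeme na operu.}) each accompanying $\bmu_{(\tilde\sigma)_z}$ absorbs the inserted unit maps $\bha$, and the associativity hexagon collapses to the claimed equality of composites. Invertibility (b) is then formal from $\boP(\sigma)\boP(\sigma^{-1})=\boP(\id)=\id$ and symmetrically.

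The main obstacle is (c). My strategy is to play associativity off against itself using the two factorizations $\tilde\sigma \boldf = \btf \tilde\rho$ of the diagonal $\bS \to \tilde\bT$ extracted from~(\ref{Ve ctvrtek letim do Dvora.}). Unfold both legs of the compatibility square by substituting the defining formula for $\boP(\sigma)$ and $\boP(\rho)$, inserting the unit maps $\bha$ on the slots indexed by $|\bT|$ and by $|\bS|$ respectively, and applying the associativity axiom~(\ref{Dnes si pujdu koupit tri venecky a slehacku.}) to each of the chains $\bS \to \bT \to \tilde\bT$ and $\bS \to \tilde\bS \to \tilde\bT$. Since the fibres of $\tilde\sigma$ and $\tilde\rho$ are all local terminal objects, each auxiliary $\bmu$ arising from a singleton-to-singleton fibre morphism is absorbed via~(\ref{Dnes jdeme na operu.}), and what remains along both routes is the single composition law for the common composite $\tilde\sigma\boldf = \btf\tilde\rho$ tensored with the iterated factor $\boP(\rho)$ of~(\ref{Az 30. prosince se to dozvim.}). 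The delicate step is the combinatorial bookkeeping of fibres: one has to identify the induced fibre morphisms $(\tilde\rho)_a$ and $(\tilde\sigma)_b$ appearing in the associativity axiom with the cleavage lifts $\tilde{\rho_a}$ produced by~(\ref{7my den na chalupe}) of Definition~\ref{8 dni na chalupe!}, so that the tensor factor $\bigotimes_a \boP(\rho_a)$ emerges in precisely the form required by~(\ref{Vcera jsem byl se Sambou v Krkonosich.}).
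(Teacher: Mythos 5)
The paper itself contains no proof of this proposition: the statement is followed immediately by ``As we will not need the above proposition, we omit its technical, but straightforward, proof.''  So there is nothing to compare against, and I judge your sketch on its own terms.  The plan is the right one, and the identity and functoriality steps are convincing as outlined; in the functoriality check the auxiliary fibre morphisms of the chain $\bS\to\bT\to\bR$ really are between local terminal objects, so~\eqref{Dnes jdeme na operu.} applies directly and the collapse you describe works.

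The genuine gap is the one you flag but do not close.  Applying associativity along $\bS\xrightarrow{\tilde\rho}\tilde\bS\xrightarrow{\btf}\tilde\bT$ produces composition laws $\bmu_{(\tilde\rho)_b}$ indexed by the \emph{operadic-category} fibre morphisms $(\tilde\rho)_b\colon\bh^{-1}(b)\to\btf^{-1}(b)$ (with $\bh=\btf\tilde\rho=\tilde\sigma\boldf$), whereas the factor $\boP(\rho)=\bigotimes_a\boP(\rho_a)$ of~\eqref{Az 30. prosince se to dozvim.}, which must appear on the other leg of~\eqref{Vcera jsem byl se Sambou v Krkonosich.}, is built from the \emph{cleavage lifts} $\tilde{\rho_a}$ of~\eqref{7my den na chalupe}.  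Using unitality of $\bO$ and Proposition~\ref{Prohlidka 30. prosince se blizi.} one checks that the two families have the same sources ($\bh^{-1}(b)=\boldf^{-1}(a)$), the same targets ($\btf^{-1}(b)$, by Definition~\ref{8 dni na chalupe!}), and the same image $\rho_a$ under the cardinality functor; but no stated axiom guarantees that a morphism of $\bO$ with those three data is unique, so $(\tilde\rho)_b=\tilde{\rho_a}$ (or even the weaker equality $\bmu_{(\tilde\rho)_b}\circ(\id\ot\bha)=\boP(\rho_a)$) is a substantive claim, not bookkeeping.  Until it is justified -- perhaps from coherence axiom~(iii) of Definition~\ref{Uz treti tyden je strasliva zima.}, or by arguing that it is the intended content of ``compatible with the fiber functor'' in Definition~\ref{8 dni na chalupe!} -- the two legs of the compatibility square do not visibly close up, and the proof is incomplete.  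A smaller slip: in the compatibility computation, unlike the functoriality one, the fibre morphisms $\boldf_b\colon\bh^{-1}(b)\to\tilde\sigma^{-1}(b)$ coming from the other factorization $\bS\to\bT\to\tilde\bT$ are \emph{not} singleton-to-singleton (the source $\bh^{-1}(b)$ has the cardinality of $\boldf^{-1}(a)$); what makes the absorption via~\eqref{Dnes jdeme na operu.} work is only that the \emph{target} $\tilde\sigma^{-1}(b)$ is a chosen local terminal object.
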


As we will not need the above proposition, we omit its 
technical, but straightforward, proof.

\begin{remark}
\label{Jezecek Bodlinek a Myska}
If we take the hypotheses of
Proposition~\ref{Uz mi klici papricky.} as the
definition of unitality, unital operads over thick operadic categories
are automatically  equipped with a cloven module action that satisfies  
(ii) of Definition~\ref{Nekde se toula.}.
\end{remark}

The thin analog of a cloven module over a thin operadic category
$\ttO$ obvious. It is a collection $\oE =
\{\oE(T)\}_{T \in \ttO}$ such that the lifts $\tilde \sigma : S \to
T$ in the lifting square
\begin{equation}
\label{Asi se te Cine nevyhnu.}
\xymatrix@C=4em@R=2em{S \ar@{-->}[r]^{\tilde \sigma} \ar@{~>}[d]& T  \ar@{~>}[d]
\\
\underline n \ar[r]^\sigma_\cong &\ \underline n
}
\end{equation}
act contravariantly and   
functorially on $\oE$ via isomorphisms
$\oE(\sigma) : \oE(T) \stackrel\cong\longrightarrow \oE(S)$.
An analog of the notation in~\eqref{Az 30. prosince se to
  dozvim.} is also clear. We are ready for

\begin{definition}
\label{Zacina byt poradna zima, ale ne takova aby zamrzlo Hradistko.}
A (nonunital) {\/\em operad\/} over a thin cloven operadic category $\ttO$ is a cloven 
module  $\oP = \{\oP(T)\}_{T \in \ttO}$ with the
composition law
\[
\mu_f : \oP(T)\ \ot\ \oP(f)
\longrightarrow \oP(S)
\]
specified for each morphism $f : S \to T$. We moreover demand the following two
properties.
\setlength{\leftmargini}{2em}
\begin{itemize}[topsep=0pt, partopsep=0pt, itemsep=0pt]
\item[(i)] 
Associativity. The diagram in (i) of~\cite[Definition 1.11]{duodel},
which was the blueprint for its thick version~(\ref{Dnes si
  pujdu koupit tri venecky a slehacku.}), commutes for each chain $T
\stackrel f\to S \stackrel g\to R$ of maps in $\ttO$. 
\item[(ii)]  
Compatibility with the action. In the situation 
of~\eqref{Zitra se budeme divat na manzele Stodolovy.}, 
we require the commutativity~of
\[
\xymatrix@C=4em{\oP(T)\ \ot\ \oP(f)
\ar[r]^(.6){{\mu_f}} \ar@{<-}[d]_{\oP(\sigma)\ \ot\ \oP(\rho)}
& \oP(S) \ar[d]^{\oP(\rho)}
\\
\oP(\tilde T) \ot  \boP(\tilde f)
\ar[r]^(.6){{\mu_{\tilde f}}} & \oP(\tilde S).
}
\]
\end{itemize}
\end{definition}

Suppose that $\ttO$ is unital, and the operad $\oP$ is unital in the
standard sense, that is,
there are unit maps $\eta_c : \unit \to \oP(U_c)$, $c \in  \pi_0(\ttO)$,
satisfying items (ii) and (iii) of~\cite[Definition 1.11]{duodel}.
It can be easily shown that the cloven module action
is related with
the unit maps of $\oP$ by the diagram
\begin{equation}
\label{Dnes ve 12:55 sraz s Jarkou.}
\xymatrix{
\oP(T) \ot \bigotimes_{i \in |S|} \oP(U_{c_i})
\ar[r]^(.7){\mu_{\tilde \sigma}} & \oP(S)
\\
\oP(T) \ot  \bigotimes_{i \in |S|} \unit 
\ar[u]^{\id \ot \bigotimes \eta_{c_i}} \ar[r]^(.6)\cong
& \oP(T) \ar[u]_{\oP(\sigma)},
}
\end{equation}
where $\tilde \sigma: S \to T$ is as in~(\ref{Asi se te Cine
  nevyhnu.}) and $U_{c_i}$ is the 
fiber of \/ $\id_S : S \to S$ over $i$. Diagram~\ref{Dnes ve 12:55
  sraz s Jarkou.} can be used as 
a~definition of the cloven module action. The following proposition with
straightforward proof has to be compared with Remark~\ref{Jezecek Bodlinek a Myska}.

\begin{proposition}
Unital $\ttO$-operads in the sense of the original
definition~\cite[Definition 1.11]{duodel} are automatically $\ttO$-operads in
Definition~\ref{Zacina byt poradna zima, ale ne takova
  aby zamrzlo Hradistko.}. 
\end{proposition}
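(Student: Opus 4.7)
The plan is to take the recipe in diagram~\eqref{Dnes ve 12:55 sraz s Jarkou.} as the definition of the action of lifts of isomorphisms on the collection $\oP = \{\oP(T)\}_{T \in \ttO}$, and then to verify (a) that this assignment is contravariantly functorial in the cleavage, and (b) that it is compatible with the composition laws $\mu_f$ in the sense of item~(ii) of Definition~\ref{Zacina byt poradna zima, ale ne takova aby zamrzlo Hradistko.}. Associativity of the composition laws, i.e.\ item~(i) of Definition~\ref{Zacina byt poradna zima, ale ne takova aby zamrzlo Hradistko.}, is inherited from~\cite[Definition~1.11]{duodel} verbatim, so nothing needs to be checked there.

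First I would verify functoriality of the lifts. When $\sigma = \id$, the chosen lift is $\id$, every fiber of $\id_S$ over $i \in |S|$ is one of the chosen local terminal objects $U_{c_i}$ by the left unitality of $\ttO$, and the unit axioms of~\cite[Definition~1.11]{duodel} force the composite on the left column of~\eqref{Dnes ve 12:55 sraz s Jarkou.} to collapse to $\id_{\oP(S)}$. For a composable pair of automorphisms $\sigma'$, $\sigma''$, the functoriality $\tilde{\sigma''\sigma'} = \tilde{\sigma''}\tilde{\sigma'}$ of the cleavage, combined with the associativity axiom of~\cite[Definition~1.11]{duodel} applied to the chain $S \to T \to R$ of lifts, rewrites the single composite produced by~\eqref{Dnes ve 12:55 sraz s Jarkou.} for $\sigma''\sigma'$ as the composition of the two composites produced for $\sigma'$ and $\sigma''$ separately; the tensor factors match because along both lifts the fibers of identities are the chosen local terminals, so the unit maps $\eta_{c_i}$ that appear are the same on both sides.

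The main step is the compatibility with composition. Given the data of~\eqref{Zitra se budeme divat na manzele Stodolovy.} together with the induced fiber lifts~\eqref{Zitra pojedeme dale.}, I would expand both routes of the square in item~(ii) of Definition~\ref{Zacina byt poradna zima, ale ne takova aby zamrzlo Hradistko.} into elementary operadic composites via~\eqref{Dnes ve 12:55 sraz s Jarkou.}. On the ``first compose, then act'' route, the action of $\tilde \rho$ is unfolded as $\mu_{\tilde \rho}$ applied after $\mu_f$, producing a single composite whose tensor factors are $\oP(T)$, the $\oP(f^{-1}(i))$ and copies of $\oP(U_{c_i})$ pulled in by the unit maps. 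On the ``first act, then compose'' route, the actions of $\tilde \sigma$ on $\oP(T)$ and of the induced lifts $\tilde{\rho_i}$ on the $\oP(f^{-1}(i))$ are unfolded first and are then followed by $\mu_{\tilde f}$. Both expressions assemble the same tensor factors and coincide by the associativity axiom of~\cite[Definition~1.11]{duodel} applied to the two factorisations of the same morphism $S \to \tilde T$, together with the fiber identifications supplied by the cleavage compatibility in Definition~\ref{Zase je to nejiste.}.

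The hard part will be purely organisational: laying out the two routes of the compatibility square in full produces a large tensor-product diagram with many factors of $\oP$ evaluated at chosen local terminal objects, at the various fibers, and at $T$ or $\tilde T$, and one must match them term by term and collapse the terminal factors through the unit axiom. No new idea beyond associativity, unitality and the cleavage compatibility already in place is required, which is why the argument can fairly be described as straightforward.
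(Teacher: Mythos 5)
Your proposal is correct and follows exactly the route the paper intends: take diagram~\eqref{Dnes ve 12:55 sraz s Jarkou.} as the \emph{definition} of the cloven module action, derive functoriality from the cleavage functoriality plus the unit/associativity axioms of~\cite[Definition~1.11]{duodel}, and obtain compatibility~(ii) by expanding both routes of the square and invoking associativity together with the fiber compatibility~\eqref{Zitra pojedeme dale.} of the cleavage. The paper explicitly omits this proof, describing it as straightforward and pointing to the parallel thick statement (Proposition~\ref{Uz mi klici papricky.} and Remark~\ref{Jezecek Bodlinek a Myska}, whose proof is also omitted as ``technical, but straightforward''), so your sketch is a faithful reconstruction of what the authors have in mind, including the honest acknowledgment that the remaining work is bookkeeping of tensor factors rather than a new idea.
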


\section{Algebras}
\label{MR byla v poradku!}

This section is devoted to the lowest, terminal level of the
triad~\eqref{Musim jit na druhy odber.}.
In the ``standard'' definition
of operads~\cite[Definition~1.20]{duodel}, algebras appear as collections indexed by the
connected components of the background operadic category. They are subject
to an associativity axiom, which uses the notions of the source and the
target of an object. We therefore open this section with a thick version
of these notions.

We define the {\/\em source\/} $\bs(\bT)$ 
of an object $\bT \in \bO$ to be the list 
of the connected components of the fibers of the identity $\id_{\bT} : \bT
\to \bT$. The {\/\em target\/} $\bt(\bT)\in \pi_0(\bO)$ is the connected component
of $\bT$.

\begin{lemma}
\label{Mel bych to vydrzet.}
Let $\bS, \bT$ be objects of\/ $\bO$. 
The lift $\tilde \sigma :\bS \to \bT$ in 
the lifting square~\eqref{Nemohu najit tu propisovacku.} acts functorially on the
sources by an isomorphism
$\bs(\sigma) : \bs(\bS) \to \bs(\bT)$. 
\end{lemma}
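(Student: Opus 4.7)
My plan is to apply the cleavage compatibility from Definition~\ref{8 dni na chalupe!} to the morphism $\boldf := \id_\bS : \bS \to \bS$, taking both the front and rear horizontal isomorphisms in diagram~\eqref{Ve ctvrtek letim do Dvora.} to be the given $\sigma : |\bS| \to |\bT|$. Because the cleavage sends identities to identities, the induced $\btf = \tilde\sigma \circ \id_\bS \circ \tilde\sigma^{-1}$ equals $\id_\bT$. For each $x \in |\bS|$ with $y := \sigma(x)$, the lifting square~\eqref{7my den na chalupe} produced by the axiom becomes
\[
\xymatrix@C=3em{
\id_\bS^{-1}(x) \ar@{~>}[d] \ar@{-->}[r]^{\tilde{\sigma_x}} & \id_\bT^{-1}(y) \ar@{~>}[d]
\\
\{x\} \ar[r]^\cong & \{y\},
}
\]
whose bottom edge is the unique bijection between singletons.

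Since every lift of an isomorphism is itself invertible --- its inverse is the lift of the inverse isomorphism, by the functoriality clause of Definition~\ref{Pojedeme na tu Sumavu?} --- the morphism $\tilde{\sigma_x}$ is an isomorphism of $\bO$. In particular $\id_\bS^{-1}(x)$ and $\id_\bT^{-1}(y)$ lie in the same connected component of $\bO$, so I define $\bs(\sigma)$ by sending the $x$th entry of $\bs(\bS)$ to the $\sigma(x)$th entry of $\bs(\bT)$. This is a bijection of indexed lists because $\sigma$ itself is a bijection.

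For functoriality, given composable isomorphisms $\sigma', \sigma''$ of finite sets with respective lifts $\tilde{\sigma'}, \tilde{\sigma''}$, I would stack two copies of~\eqref{Ve ctvrtek letim do Dvora.} and apply axiom~(iii) of Definition~\ref{Uz treti tyden je strasliva zima.} to obtain the equality of fiberwise lifts $\widetilde{(\sigma''\sigma')_x} = \widetilde{\sigma''_{\sigma'(x)}} \circ \widetilde{\sigma'_x}$. Combined with the functoriality of the cleavage this gives $\bs(\sigma''\sigma') = \bs(\sigma'') \circ \bs(\sigma')$, while the case $\sigma = \id$ is immediate because identities lift to identities, whose fiberwise restrictions are again identities.

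The main point of care will be the functoriality step: one must track axiom~(iii) through a three-dimensional diagram obtained by stacking two lifting cubes, and then appeal to the uniqueness of lifts noted after Definition~\ref{Pojedeme na tu Sumavu?} --- a lift is determined by either its source or its target --- to identify the fiberwise morphism coming from the single cube for $\sigma''\sigma'$ with the composite of the fiberwise morphisms coming from the two stacked cubes.
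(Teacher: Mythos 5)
Your proof is correct and takes essentially the same route as the paper: you specialize the cleavage-compatibility cube \eqref{Ve ctvrtek letim do Dvora.} to $\boldf = \id_\bS$ with $\rho = \sigma$ on both slanted faces, read off $\btf = \id_\bT$ because identities lift to identities, and use the induced lifting squares \eqref{7my den na chalupe} to connect $\id_\bS^{-1}(x)$ with $\id_\bT^{-1}(\sigma(x))$, which defines $\bs(\sigma)$ on components. The only place you overcomplicate matters is functoriality: since $\bs(\sigma)$ is given by the explicit formula ``component of $\id_\bS^{-1}(x) \mapsto$ component of $\id_\bT^{-1}(\sigma(x))$'', the identity $\bs(\sigma''\sigma') = \bs(\sigma'')\bs(\sigma')$ follows immediately from $(\sigma''\sigma')(x) = \sigma''(\sigma'(x))$, with no need to stack cubes or invoke axiom (iii) of Definition~\ref{Uz treti tyden je strasliva zima.}.
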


\begin{proof}
Embed  $\tilde \sigma :\bS \to \bT$ to the diagram
\[
\xymatrix@C=2.1em@R=1.4em{& 
\ar@{-->}[rr]^{\id_\bT} \ar@{~>}'[d][dd] \bT&&
\bT \ar@{~>}[dd]
\\
\bS \ar@{-->}[ur]^{\tilde \sigma}  \ar[rr]^(.6){\id_\bS}  \ar@{~>}[dd] && 
\bS \ar@{-->}[ur]^{\tilde \sigma}  
\ar@{~>}[dd]
\\
&Y\ar@{-->}'[r]^(.7){{\id_Y}}[rr] &&\, Y.
\\
X \ar[rr]^{\id_X}\ar[ur]^\sigma_\cong  &&X \ar[ur]^\sigma_\cong 
}
\]
The associated lifting squares~\eqref{7my den na chalupe} provide the morphism
$\tilde{\sigma_{\! x}} : \id_\bS^{-1}(x) \to
\id_\bT^{-1}(\sigma(x))$, for each $x \in X$. 
Therefore the fiber of $\id_\bS$ over $x$ belongs to the same
component of $\bO$ as the fiber of $\id_\bT$ over $\sigma(x)$.
The assignment
\begin{equation}
\label{Dnes dopopledne uz mam odpracovano.}
\bs(\sigma) : \hbox {the component of }\  \id_\bS^{-1}(x)
\longmapsto \hbox {the component of }\ \id_\bT^{-1}(\sigma(x)),\ x \in X,
\end{equation}
defines the requisite action $\bs(\sigma) : \bs(\bS) \to \bs(\bT)$. 
\end{proof}

Given a collection $\bA = \{\bA_c \in \ttV \ | \ c
\in \pi_0(\bO)\}$ indexed by the set of connected components of~$\bO$,
we denote for $\bT \in \bO$
\[
\bA_{\bs(\bT)} := \bigotimes_{c \in \bs(\bT)} \bA_c.
\] 

\begin{definition}
\label{Vezme to?}
An {\/\em algebra\/} for an operad $\boP$ is a collection $\bA = \{\bA_c\}_{c
\in \pi_0(\bO)}$ of objects of $\ttV$, along with the actions
\[
\bal_\bT : \boP(\bT) \ot  \bA_{\bs(\bT)}  \longrightarrow \bA_{\bt(\bT)}, \ \bT \in \bO.
\]
We assume the following three properties.
\begin{subequations}
\setlength{\leftmargini}{2em}
\begin{itemize}[topsep=0pt, partopsep=0pt, itemsep=0pt]
\item[(i)] Associativity. For each morphism $\boldf : \bS \to \bT$
with fibers $\bF_{\! a}$, $a \in |\bT|$, we require the commutativity of 
\begin{equation}
\label{Sel jsem spat ve tri hodiny.}
\xymatrix@C=4em{
\boP(\bT) \ot \bigotimes_a\big( \boP(\bF_{\!a}) \ot  \bA_{\sou(\bF_{\!a})}\big)
\ar[r]^(.58){\id\ \ot \bigotimes_a \bal_{\bF_{\!a}}} \ar[d]_{\cong}
 & \oP(\bT) \ot \bigotimes_a \bA_{\tar( \bF_{\!a})} 
\ar[r]_(.56)\cong^(.56){\boxed{$\scriptsize 1$}}
& \boP(T) \ot \bA_{\sou(\bT)}\ar[d]^(.6){\bal_{\bT}}
\\
\ar@{=}[d]_{\bmu_\boldf \ \ot \id^{\ot |\bT|}}
\boP(\bT) \ot \boP(\boldf) \ot \bigotimes_a \bA_{\sou(\bF_a)}
&
&  \bA_{\tar( \bT)} \ar@{=}[d]^{\boxed{$\scriptsize 2$}}
\\
\ar[r]^{\boxed{$\scriptsize 3$}}_\cong
\boP(\bS) \ot \bigotimes_a \bA_{\sou(\bF_{\!a})} & \boP(\bS) \ot
\bA_{\sou(\bS)} 
\ar[r]^{\bal_{\bS}}
 &\ \bA_{\tar(\bS)} . 
}
\end{equation}
\end{itemize}
In the above diagram, \boxed{$\scriptsize 1$} uses the
isomorphism between the connected components of the fibers~$\bF_{\! a}$
and the source of $\bT$ implied by the existence of the induced maps
$\boldf_a : \bF_{\! a} \to \id^{-1}_\bT(a)$, $a \in |\bT|$,
guaranteed by (i) of Definition~\ref{Uz treti tyden je strasliva
  zima.} applied to
\[
\xymatrix@C = +1em@R = +1em{
\bS      \ar[rr]^\boldf \ar[dr]_\boldf & & \bT \ar[dl]^{\id_\bT}
\\
&\ \bT . &
} 
\]
Since $\bS$ and $\bT$ are connected by $\boldf$, they belong to the
same component of $\bO$, thus $\tar(\bS) = \tar(\bT)$, which explains 
\boxed{$\scriptsize 2$}\ . By (ii) 
of Definition~\ref{Uz treti tyden je strasliva
  zima.} applied to
\[
\xymatrix@C = +1em@R = +1em{
\bS      \ar[rr]^{\id_\bS} \ar[dr]_\boldf & & \bS \ar[dl]^{\boldf}
\\
&\ \bT ,&
} 
\]
the set of fibers of $\id_{\bF{\!a}}$, $a \in |\bT|$, is the same as
  the set of fibers of $\id_\bS$, which explains \boxed{$\scriptsize
    3$}\ .
\setlength{\leftmargini}{2em}
\begin{itemize}[topsep=0pt, partopsep=0pt, itemsep=0pt]
\item[(ii)]
Compatibility with the cloven action. Given the lifting
square~\eqref{Nemohu najit tu propisovacku.}, the diagram 
\begin{equation}
\label{Udelam si vajicka.}
\xymatrix@C=4em{
\boP(\bS) \ot \bA_{\bs(\bS)}
  \ar@{<-}[d]_{\boP(\sigma) \ot \bA_{\bs(\sigma)}}^\cong
\ar[r]^(.6){\bal_\bS} & \ar@{=}[d]
  \bA_{\bt(\bS)}
\\
\boP(\bT) \ot \bA_{\bs(\bT)}  \ar[r]^(.6){\bal_\bT}&
\  \bA_{\bt(\bT)},
}
\end{equation}
where $\bA_{\bs(\sigma)}:  \bA_{\bs(\bS)} \stackrel\cong\longrightarrow  
\bA_{\bs(\bT)}$ is the isomorphism 
given by the permutation of the factors according
to $\bs(\sigma)$, commutes.
\item[(iii)]
If the operad $\boP$ 
is unital, then we moreover assume that, for
each $c \in \pi_0(\bO)$, the diagram
\begin{equation}
\label{Dnes jsem toho moc nenaspal.}
\xymatrix@C=3.5em{\boP(\bU_c) \ot \bA_c  \ar[r]^(.6){\bal_{\bU_c}} & \bA_c
\\
\unit \ \ot \bA_c   \ar[u]^{\eta_c \ot \id} \ar[r]^\cong & \bA_c \ar@{=}[u]
}
\end{equation}
commutes.
\end{itemize} 
\end{subequations}
\end{definition}

In the following proposition, $\boP$ will be an unital operad over a
thick cloven unital operadic category 
$\bO$, and $\eta_c$ will be as in~\eqref{Pujdu koupit orisky.}.

\begin{proposition}
If $\boP$ is unital, then (ii) of Definition~\ref{Vezme to?} holds
automatically, and the following generalization
\[
\xymatrix@C=3.5em{
\boP(\bU^{\, \{x\}}_c) \ot \bA_c  \ar[r]^(.6){\bal_{\bU^{\{x\}}_c}} & \bA_c
\\
\unit \ \ot \bA_c \ar[u]^{\bha^{\{x\}}_c \ot \id} \ar[r]^\cong 
& \bA_c \ar@{=}[u]
}
\]
of~\eqref{Dnes jsem toho moc nenaspal.},
where $\bha^{\{x\}}_c$ is as in~\eqref{Poletime zitra nebo v nedeli?},
commutes.
\end{proposition}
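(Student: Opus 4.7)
The plan is to establish both assertions in sequence: first the generalized unit axiom, then the automatic compatibility~(ii) of Definition~\ref{Vezme to?}. The common bridge is identity~\eqref{Pozitri jedeme no Mercina na Vanoce.}, which expresses the cloven module action on $\boP$ through the operadic unit elements $\bha_c^{\{x\}}$.

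\textbf{Step 1 (generalized unit axiom).} Take the cloven lift $\tilde\sigma : \bU_c \to \bU_c^{\ \{x\}}$ of the isomorphism $\{1\} \to \{x\}$. By left unitality together with Proposition~\ref{Prohlidka 30. prosince se blizi.}, its sole fiber (over $x$) is $\bU_c$. I would apply algebra associativity~(i) of Definition~\ref{Vezme to?} to $\tilde\sigma$ with $p := \bha_c^{\{x\}}(\unit) \in \boP(\bU_c^{\ \{x\}})$ and fiber input $q_x := \eta_c(\unit) \in \boP(\bU_c)$. Identity~\eqref{Pozitri jedeme no Mercina na Vanoce.} specializes here to $\bmu_{\tilde\sigma}(\bha_c^{\{x\}}(\unit) \ot \eta_c(\unit)) = \boP(\sigma)(\bha_c^{\{x\}}(\unit)) = \eta_c(\unit)$, the last equality being the very definition of $\bha_c^{\{x\}}$ as the preimage of $\eta_c$ under the cloven isomorphism. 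The original unit axiom~(iii) applied to $\bU_c$ then simplifies both sides of the resulting associativity equation, and what survives is precisely $a = \bal_{\bU_c^{\ \{x\}}}(\bha_c^{\{x\}}(\unit) \ot a)$.

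\textbf{Step 2 (compatibility (ii) is automatic).} Given a general cloven lift $\tilde\sigma : \bS \to \bT$ of $\sigma : X \to Y$, expand $\boP(\sigma)(p)$ using~\eqref{Pozitri jedeme no Mercina na Vanoce.} as $\bmu_{\tilde\sigma}\bigl(p \ot \bigotimes_{x \in X} \bha_{c_x}^{\{x\}}(\unit)\bigr)$, with the fibers of $\tilde\sigma$ identified by Proposition~\ref{Prohlidka 30. prosince se blizi.}. Associativity~(i) of the algebra then converts $\bal_\bS(\boP(\sigma)(p) \ot a)$ into an expression whose fiber factors are $\bal_{\tilde\sigma^{-1}(y)}(\bha_{c_{x_y}}^{\{x_y\}}(\unit) \ot a_{x_y})$; by the generalized unit axiom of Step~1, each of these collapses to $a_{x_y}$. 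What remains on the $\bT$-side is $\bal_\bT\bigl(p \ot \bigotimes_{y \in Y} a_{\sigma^{-1}(y)}\bigr)$, which is exactly $\bal_\bT(p \ot \bA_{\bs(\sigma)}(a))$, yielding the commutativity of~(ii).

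The main technical point, and really the only substantive check beyond the symbolic manipulation above, is matching the tensor-factor reindexing produced by the cleavage with the isomorphism $\bA_{\bs(\sigma)}$ appearing in~(ii). This match is forced by Lemma~\ref{Mel bych to vydrzet.} together with Proposition~\ref{Prohlidka 30. prosince se blizi.}: the lemma defines $\bs(\sigma)$ as carrying the component of $\id_\bS^{-1}(x)$ to that of $\id_\bT^{-1}(\sigma(x))$, and the proposition identifies these identity fibers as $\bU_{c_x}^{\ \{x\}}$ and $\bU_{c_x}^{\ \{\sigma(x)\}}$ respectively --- precisely the data reindexing the $\bha$-factors in~\eqref{Pozitri jedeme no Mercina na Vanoce.}. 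Thus the bookkeeping is forced by the cleavage itself and no further hypothesis is required.
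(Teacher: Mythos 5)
Your proof is correct and uses essentially the same ingredients as the paper --- the relation~\eqref{Pozitri jedeme no Mercina na Vanoce.} between the cloven action and the unit maps, algebra associativity (i) of Definition~\ref{Vezme to?}, and the original unit axiom (iii) --- but you run them in the opposite order. The paper's proof first asserts that (ii) follows from~\eqref{Pozitri jedeme no Mercina na Vanoce.}, and then derives the generalized unit axiom as a consequence of (ii) combined with the definition~\eqref{Poletime zitra nebo v nedeli?} of $\bha^{\{x\}}_c$. You instead prove the generalized unit axiom first (Step~1) by specializing associativity to the lift $\tilde\sigma : \bU_c \to \bU^{\,\{x\}}_c$, where the fiber factor reduces to $\eta_c$ and can be killed by the original axiom (iii); only then do you obtain (ii) in full generality (Step~2). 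Your ordering is the logically cleaner one: if one actually unwinds the paper's first-part argument through associativity, the fiber terms $\bal_{\bU^{\{x\}}_{c_x}}\bigl(\bha^{\{x\}}_{c_x}(\unit) \ot a_x\bigr)$ appear and must collapse to $a_x$, which \emph{is} the generalized unit axiom --- so your Step~1 is precisely the base case that breaks the apparent circular dependence in the paper's sketch, and making it explicit is a genuine improvement in exposition. One minor citation slip in Step~1: you invoke \emph{left} unitality to identify the sole fiber of $\tilde\sigma : \bU_c \to \bU^{\,\{x\}}_c$ with $\bU_c$, but left unitality alone only gives $\id_{\bU_c}^{-1}(1) = \bU_{c'}$ for \emph{some} $c'$; the identification $c' = c$ comes from \emph{right} unitality (${\rm Fib}_1$ being the domain functor applied to $\id_{\bU_c}$) together with the second part of Proposition~\ref{Prohlidka 30. prosince se blizi.}.
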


\begin{proof}
The first part is a consequence of the relation between the cloven module
action on $\boP$ and its unital structure expressed by the
commutativity of~\eqref{Pozitri jedeme no Mercina na Vanoce.}, 
The second part follows from  the
definition~\eqref{Poletime zitra nebo v nedeli?} of $\bha^{\{x\}}_c$ and 
the commutativity of the diagram
\[
\xymatrix@C=3.5em{
\boP(\bU^{\,\{x\}}_c) \ot \bA_c  \ar[r]^(.6){\bal_{\bU^{\{x\}}_c}} & \bA_c
\\
\ar[u]^\cong
\boP(\bU_c) \ot \bA_c  \ar[r]^(.6){\bal_{\bU_c}} & \bA_c \ar@{=}[u]
}
\]
implied by (ii) of Definition~\ref{Vezme to?}.
\end{proof}

Let us briefly address algebras in the context of a thin cloven operadic
background category~$\ttO$.
For $T \in \ttO$ denote by $s(T)$ the list of connected
components of the fibers of $\id_T : T \to T$, and by $t(T)$ the
connected component of $T$. As in Lemma~\ref{Mel bych to vydrzet.} we
notice that the lifts $\tilde \sigma$  in~\eqref{Asi se te Cine
  nevyhnu.} act functorially by isomorphisms $s(\sigma) : s(S) \to s(T)$ of the sources.

\begin{definition}
\label{Jaruska uvarila brokolici.}
An {\/\em algebra\/} for an $\ttO$-operad $\oP$ is a collection $A = \{A_c\}_{c
\in \pi_0(\ttO)}$, equipped with the actions
\[
\alpha_T : \oP(T) \ot  A_{s(T)}  \longrightarrow A_{t(T)}, \ T \in \ttO,
\]
which satisfy the following three axioms.
\setlength{\leftmargini}{2em}
\begin{itemize}[topsep=0pt, partopsep=0pt, itemsep=0pt]
\item[(i)] Associativity. 
A thin analog of diagram~\eqref{Sel jsem spat ve tri
  hodiny.}, which actually appeared
in Definition~48 of~\cite{bivariant}, commutes for each morphism $S\to T$.
\item[(ii)]
Compatibility with the cloven action. An obvious thin analog of diagram~(\ref{Udelam si
  vajicka.}), namely
\[
\xymatrix@C=4em{
\oP(S) \ot A_{s(S)}
  \ar@{<-}[d]_{\oP(\sigma) \ot A_{s(\sigma)}}^\cong
\ar[r]^(.6){\alpha_S} & \ar@{=}[d]
  A_{t(S)}
\\
\oP(T) \ot A_{s(T)}  \ar[r]^(.6){\alpha_T}&
\  A_{t(T)},
}  
\]
commutes  for each lifting
square~\eqref{Asi se te Cine
  nevyhnu.}.
\item[(iii)]
If $\oP$ is unital, we assume that the diagram
\[
\xymatrix@C=3.5em{\oP(U_c) \ot A_c  \ar[r]^(.6){\alpha_{U_c}} & A_c
\\
\unit \ \ot A_c   \ar[u]^{\eta_c \ot \id} \ar[r]^\cong & A_c \ar@{=}[u]
}
\]
commutes for
each $c \in \pi_0(\ttO)$,.
\end{itemize} 
\end{definition}

As in the thick case, the compatibility (ii) is automatic if $\oP$ is unital. 
A $\oP$-algebra is then the same as a morphism from
$\oP$ to a suitable colored endomorphism operad, so our definition agrees with
Definition~1.12 of~\cite{duodel}.

\section{The equivalences}
\label{Dnes je Mikulase, a Mikulasek s Lachtankem jeste spinkaji.}

Operadic functors were introduced, in the context of the standard,
thin operadic categories, \hbox{in~\cite[page~1635]{duodel}}. 
Among other properties,  
operadic functors were required to commute with the cardinality
functors, to send fiber to fibers, and to send the chosen local terminal objects to
the chosen local terminal objects in the unital case. A modification
to the cloven case is given in

\begin{definition}
An operadic functor $F:\ttO' \to \ttO''$ between thin cloven operadic
categories is {\/\em cloven\/} if the images
\begin{subequations}
\begin{equation}
\label{Zitra}
\xymatrix@C=4em@R=2em{F(S) \ar@{-->}[r]^{F(\tilde \sigma)} \ar@{~>}[d]& F(T)  \ar@{~>}[d]
\\
\underline n \ar[r]^\sigma_\cong &\ \underline n
}
\end{equation}
of the lifting squares
\begin{equation}
\label{MR}
\xymatrix@C=4em@R=2em{S \ar@{-->}[r]^{\tilde \sigma} \ar@{~>}[d]& T  \ar@{~>}[d]
\\
\underline n \ar[r]^\sigma_\cong &\ \underline n
}
\end{equation}
\end{subequations}
in~$\ttO'$ are lifting squares in $\ttO''$. Operadic functors between thick
operadic categories and their cloven versions are defined analogously.
\end{definition}

Notice that, since $F$ commutes with the cardinality functor,
$|F(\tilde \sigma)| = |\tilde \sigma| = \sigma$, the diagram
in~\eqref{Zitra} is indeed the $F$-image of~\eqref{MR}.

Let us denote by $\copcat$ the category of thin cloven operadic
categories and their cloven operadic functors, and by $\COPCAT$ its
thick version. The aim of this section is to establish 
the central result of this article, namely

\begin{theorem}
\label{Dnes jedu do Kolina a zitra letim do Dvora.}
There exists a natural correspondence \ $\E : \ttO \leftrightarrow
\bO: \R$ (extension and restriction) which extends to an equivalence
of the categories $\copcat$ and \/ $\COPCAT$.
The categories of operads and their
algebras over the corresponding operadic
categories are naturally equivalent.

The thin operadic category \/ $\ttO$ is unital if and only if the corresponding
thick operadic category \/ $\bO$ is unital. If this is the case, then the above
equivalence restricts to an equivalence of the categories of unital
operads and their algebras. 
\end{theorem}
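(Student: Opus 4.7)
The plan is to construct explicit mutually quasi-inverse functors $\E: \copcat \leftrightarrow \COPCAT : \R$ and then observe that operads, algebras, and the unital structure transport automatically along this equivalence, because their defining data are determined up to canonical isomorphism by the cleavage. For $\R$, take $\R(\bO)$ to be the full subcategory of $\bO$ on objects $\bT$ with $|\bT| \in \sFin$, and define the fiber of $\boldf: \bS \to \bT$ over $i \in |\bT|$ by applying the thick cleavage to the canonical order-preserving bijection $|\boldf|^{-1}(i) \to \underline{k}$ starting from the thick fiber $\boldf^{-1}(i) \in \bO$. For $\E$, let $\E(\ttO)$ have objects $(T,X,\phi)$ with $T \in \ttO$, $X \in \Fin$, and $\phi: |T| \to X$ a bijection in $\Fin$; let morphisms $(T',X',\phi') \to (T'',X'',\phi'')$ be $\ttO$-morphisms $f: T' \to T''$, with induced cardinality map $\phi'' \circ |f| \circ (\phi')^{-1}$. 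Fibers of such morphisms are computed by combining the thin fiber in $\ttO$ with the restrictions of the bijections $\phi$, and the cleavage sends $(T,X,\phi)$ via $\id_T$ to $(T,Y,\rho \circ \phi)$ for each isomorphism $\rho: X \to Y$. Verification of Definitions~\ref{Uz treti tyden je strasliva zima.} and~\ref{8 dni na chalupe!} on both sides uses only the thin/thick cleavage axioms.

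For the equivalence, send $T \in \ttO$ to $(T, |T|, \id) \in \R(\E(\ttO))$. Any $(T, \underline{n}, \phi) \in \R(\E(\ttO))$ is isomorphic to $(T', \underline{n}, \id)$, where $T' \in \ttO$ is the codomain of the thin cleavage lift $\tilde\phi: T \to T'$; full faithfulness is immediate from the definition of $\E$. Dually, send $(\bT, X, \phi) \in \E(\R(\bO))$ to $\hat{\bT} \in \bO$, the codomain of the thick cleavage lift of $\phi$ at $\bT$; essential surjectivity holds because for any $\bT \in \bO$ one lifts an arbitrary bijection $|\bT| \to \underline{n}$ at $\bT$ to produce an isomorphism $\bT \cong \hat{\bT}$ with $\hat{\bT} \in \R(\bO)$, while full faithfulness follows from cleavage functoriality. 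An operad $\oP$ on $\ttO$ extends to $\E(\ttO)$ by $\boP(T,X,\phi) := \oP(T)$, with composition laws and cloven module action propagated via the compatibility~\eqref{Vcera jsem byl se Sambou v Krkonosich.}; restriction is direct, and $\pi_0(\E(\ttO)) = \pi_0(\ttO)$ identifies the sources and targets, so algebras correspond bijectively too.

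For unitality, a thin family $\{U_c\}_{c \in \pi_0(\ttO)}$ yields the thick family $\{(U_c, \{1\}, \id)\}$, and conversely any thick family $\{\bU_c\}$ with $|\bU_c| = \{1\}$ already sits in $\R(\bO)$; Proposition~\ref{Prohlidka 30. prosince se blizi.} ensures that left and right unitality correspond on both sides, and that the unit maps $\eta_c$ together with their derived morphisms $\bha^{\{x\}}_c$ from~\eqref{Poletime zitra nebo v nedeli?} are in bijection. The principal technical obstacle is the strict verification of axioms (ii) and (iii) of Definition~\ref{Uz treti tyden je strasliva zima.} for the fiber functor of $\R(\bO)$: the identity~\eqref{karneval_u_retardacku} and the matching $\boldf_q = (\boldf_r)_q$ must hold on the nose after cleavage transports, so two a~priori distinct iterated cleavage compositions must coincide. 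This reduces to the functoriality clause of Definition~\ref{Pojedeme na tu Sumavu?} together with the compatibility~\eqref{7my den na chalupe}, but demands careful diagram-chasing for iterated fibers; an analogous but lighter bookkeeping is needed on the $\E$-side, where all cleavage lifts are built from permutations acting on fixed $\ttO$-objects.
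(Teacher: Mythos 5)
Your restriction functor $\R$ agrees with the paper's, but your extension $\E$ is genuinely different: you take objects to be honest triples $(T,X,\phi)$ with $\phi:|T|\to X$ a bijection, whereas the paper quotients such data by the action of the cleavage (i.e., $(A,\sigma')\sim(A'',\sigma'')$ whenever $\sigma''=\phi\sigma'$ and $A''$ is the cleavage target of $\phi$). Your unquotiented version has the appealing side effect of eliminating the colimit~\eqref{Pomuze si tim nejak?} from the operad correspondence, since you can simply set $\boP(T,X,\phi):=\oP(T)$.

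However, this creates a genuine gap for the first assertion of the theorem. The claim is that $\E,\R$ form an \emph{equivalence of the 1-categories} $\copcat$ and $\COPCAT$, which requires natural isomorphisms $\R\E\cong\id_{\copcat}$ and $\E\R\cong\id_{\COPCAT}$ whose components are \emph{isomorphisms} in $\copcat$, i.e., invertible cloven operadic functors. With your $\E$, the category $\R\E(\ttO)$ has, for each $T\in\ttO$ with $|T|=\underline n$, a whole $\Sigma_n$-orbit $\{(T,\underline n,\phi)\}_\phi$ of mutually isomorphic objects, so it is strictly larger than $\ttO$ and cannot be isomorphic to it. Moreover, your proposed unit $T\mapsto(T,|T|,\id)$ fails to be a cloven operadic functor: given a permutation $\sigma:\underline n\to\underline n$, the lift of $\sigma$ in $\ttO$ is $\tilde\sigma:T\to T'$, whose image under your embedding is $\tilde\sigma:(T,\underline n,\id)\to(T',\underline n,\id)$; but the cleavage you declared on $\E(\ttO)$ gives the \emph{different} morphism $\id_T:(T,\underline n,\id)\to(T,\underline n,\sigma)$. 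So the lifting square is not preserved. (The functor in the other direction, $(T,\underline n,\phi)\mapsto\mathrm{cod}(\tilde\phi)$, is cloven, but it is not an isomorphism of categories.) Consequently your $\E$ and $\R$ only yield an equivalence up to non-invertible cloven comparison functors, which is weaker than what the theorem asserts and weaker than the paper's Proposition~\ref{Vratil jsem se od doktora Reicha.}, which produces actual isomorphisms of cloven operadic categories. The paper avoids this problem precisely by taking equivalence classes under the cleavage action, which collapses the $\Sigma_n$-orbits and makes $\R\E\ttO\cong\ttO$ on the nose.

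A secondary point: you say ``restriction [of an operad] is direct,'' but it is not quite. The fiber of a morphism in $\R\bO$ is obtained from the thick fiber by applying the thick cleavage (cf.~\eqref{Zitra prece jen na letisti budu.}), so the restricted composition law $\mu_f$ must be defined using the cloven module isomorphism $\boP(\sigma):\oP(f)\to\boP(\boldf)$ precomposed with $\bmu_\boldf$, exactly as in the paper's proof of Proposition~\ref{Odpoledne zavolam.}. This is a routine but nonvacuous step worth spelling out.
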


We denote, as before, by $\Fin$ the  category of finite
sets and by $\sFin$ its skeletal subcategory of 
finite ordered sets $\underline n :=
\{1,\ldots,n\}$, $n \geq 0$. The blackboard $\bO$ will serve as the generic
name of a~thick cloven operadic category, the typewriter $\ttO$ the
generic name of a thin cloven one.

The rest of this section is devoted to the proof of Theorem~\ref{Dnes
  jedu do Kolina a zitra letim do Dvora.}. 
First we define the restriction and the extension. Then, in
Proposition~\ref{Vratil jsem se od doktora Reicha.} we prove that they
are inverse to each other and in Proposition~\ref{unit} we investigate
the unitality. Proposition~\ref{Odpoledne zavolam.}
establishes the equivalence of the associated categories of operads and
Proposition~\ref{Dnes mam silnou socialni fobii.} addresses algebras.

\vskip .5em
\noindent
{\bf The restriction.}
The ``thin'' operadic category $\R\bO$ is the full subcategory of $\bO$
consisting of objects whose cardinalities are finite ordered sets
$\underline n =
\{1,\ldots,n\} \in \sFin$. Let $\boldf :
S \to T$ be a~morphism in $\bO$, $|S| = \underline m$, $|T| =
\underline n$, and $f : S \to T$ be the same morphism considered as a
morphism in $\R\bO$.

Given $i \in \underline n$, the $i$th 
fiber $f^{-1}(i)$ of $f$ in $\R\bO$ is constructed from the $i$th
fiber $\inv\boldf i$ of $f$ in~$\bO$ as follows. The
cardinality $|\inv\boldf i|$ is a subset of ${\underline m}$. Let $\sigma_i :
|\inv\boldf i| \stackrel\cong\longrightarrow \underline k_i$ be the unique order-preserving isomorphism
to some finite ordinal $\underline k_i \in \sFin$. Then the fiber
$\inv f i$ is defined as the target of the lift $\tilde \sigma_i$ of
$\sigma_i$ in the lifting square
\begin{equation}
\label{Zitra prece jen na letisti budu.}
\xymatrix{\inv\boldf i  \ar@{-->}[r]^{\tilde {\sigma_i}} \ar@{~>}[d]  & \inv
  f i 
\ar@{~>}[d]
\\
|\inv\boldf i |\ar[r]^{\sigma_i}_\cong &\ \underline k_i\ .
}
\end{equation}
Its cardinality is $\underline k_i$.

\vskip .5em
\noindent 
{\bf The extension.}
The objects of the extension $\E\ttO$ are  equivalence
classes of pairs consisting of an isomorphism $\sigma :X \stackrel\cong\longrightarrow
{\underline n}$ and an object $A \in \ttO$ with $|A| = {\underline n}$, written as 
$\rep X\sigma nA$ or as
\begin{equation}
\label{Dnes jsem poslal sparu Ortaggiovi.}
\xymatrix@R=1.3em{&A\ar@{~>}[d]
\\
X \ar[r]^\sigma_\cong  &{\underline n},
}
\end{equation}
modulo the relation
$
\rep X{\ \sigma'} n{A'}
 \sim
\rep X{\ \sigma''} n{A''}
$
if there exists an automorphism  $\phi : {\underline n} \to {\underline n}$
such that $A''$ is the target of the lift $\tilde \phi$ of~$\phi$ in
the lifting square of
\begin{equation}
\label{Dnes pojedeme na chalupu a tam budeme neskutecnych 9 dni.}
\xymatrix@C=3em{
&A' \ar@{-->}[r]^{\tilde \phi}\ar@{~>}[d]  &A'' \ar@{~>}[d]
\\
X   \ar[r]^{{\sigma'}}_\cong &{\underline n}\ar[r]^\phi_\cong &
{\underline n}& X \ar[l]_{{\sigma''}}^\cong
}
\end{equation}
and $\sigma'' = \phi \sigma'$.
The cardinality of such an equivalence class is $X$ by definition.

The morphisms of\, $\E\ttO$ are equivalence classes too; 
a morphism $\boldf : \bS \to
\bT$ from the equivalence class $\bS$ of
$\rep X\xi mS$
to the equivalence class $\bT$ of $\rep Y\eta nT$
is the equivalence class of a~morphism $f:S \to T$ in
\begin{equation}
\label{Dnes jdu s Jarkou na koncert.}
\xymatrix@C=1.6em@R=.7em{
&&S   \ar[rr]^f  \ar@{~>}[dd] && T
\ar@{~>}[dd]
\\
&&&
\\
X\ar[rr]^\xi_\cong &&{\underline m} \ar[rr]^{|f|}  && {\underline n}   &&Y \ar[ll]_\eta^\cong.
}
\end{equation}  
The cardinality functor is given by $|\boldf| := \eta^{-1} |f| \xi : X
\to Y$.
The equivalence, by definition, 
identifies $f:S \to T$ with $\tilde f:= \tilde \sigma
f \tilde \rho^{-1} : \tilde S \to
\tilde T$ in the diagram
\begin{equation}
\label{Pujde s nami dnes Jarka k indianum?}
\xymatrix@C=2.5em@R=1.7em{
&&& \ar[rr]^{\tilde f} \ar@{~>}'[d][dd] \tilde S&& \tilde T \ar@{~>}[dd]
\\
&&S \ar@{-->}[ur]^{\tilde \rho}  \ar[rr]^(.6)f  \ar@{~>}[dd] && 
T \ar@{-->}[ur]^{\tilde
  \sigma}  
\ar@{~>}[dd]
\\
&&&{\underline m} \ar'[r]^(.7){|\tilde f|}[rr] && {\underline n}
\\
X\ar[rr]^\xi_\cong &&{\underline m} \ar[rr]^{|f|}\ar[ur]^\rho_\cong  && {\underline n} \ar[ur]^\sigma_\cong  &&Y \ar[ll]_\eta^\cong 
}
\end{equation}
in which $\rho : {\underline m} \to {\underline m}$ and  $\sigma : {\underline n} \to {\underline n}$ are
isomorphisms in $\sFin$, and $\tilde \rho: S \to \tilde S$,
$\tilde \sigma: T \to \tilde T$ their lifts. 
Notice that 
$\rep X{\rho\xi}m{\tilde  S}$ is another representative of $\bS$ 
and $\rep Y{\sigma\eta}n{\tilde T}$ another representative of
$\bT$, so $\tilde f$ indeed represents a morphism $\bS \to \bT$.

The diagram below defines the categorical composite of a morphism $\bS
\to \bT$ represented by the left part of the diagram, 
followed by a morphism $\bT \to \bR$ represented by the right part,
\[
\xymatrix@C=3em{
&S \ar[r]^f \ar@{~>}[d]   &T' \ar@{~>}[d]  \ar[rr]^{\tilde {\eta ' {\eta''}^{-1}}}&&
T'' \ar[r]^g\ar@{~>}[d]   &R\ar@{~>}[d] &
\\
X\ar[r]^\xi_\cong & \ar[r]^{|f|} \underline m & \underline n 
& \ar[l]_{\eta'}^\cong Y \ar[r]^{\eta'}_\cong   & \underline n \ar[r]^{|g|}
& \underline k &   \ar[l]_\omega^\cong Z.
}
\]
In this diagram, the corner $\rep X\xi mS$ represents $\bS$,
$\rep Y{\eta'} n{T'}$ resp.~ $\rep Y{\eta''} n{T''}$ are two
equivalent representatives of $\bT$, and $\rep Z{\omega} k{R}$
represents $\bR$. The middle square is a lifting square in $\ttO$.

Let us describe the fiber $\inv{\boldf} y$  over $y \in Y$ of a
morphism $\boldf : \bS \to
\bT$  represented by $f: S \to
T$ in~(\ref{Dnes jdu s Jarkou na koncert.}). Denote $i
:= \eta(y)\in {\underline n}$ and let $\inv f i$ be the the $i$th fiber of $f$ in
$\ttO$. In this situation we have the corner
\begin{equation}
\label{Budu se sprchovat asi az zitra.}
\xymatrix@R=3.5em{
&&f^{-1}(i) \ar@{~>}[d]
\\ 
(|f|\xi)^{-1}(i) \ar[r]^\xi_\cong& 
|f|^{-1}(i)\ar[r]^{\it can}_\cong &|f|_{\it pb}^{-1}(i)
}
\end{equation}
where $|f|_{\it pb}^{-1}(i)$ is the $i$th fiber of $|f|$ in $\sFin$,
i.e.\ a pullback, and ${\it can}$ is the unique canonical order-preserving
isomorphism between the set-theoretic preimage $|f|^{-1}(i) \subset {\underline m}$ and the
ordinal   $|f|_{\it pb}^{-1}(i)\in \sFin$. Then  $\inv{\boldf} y  \in
\E\ttO$ is
the equivalence class of the corner~(\ref{Budu se sprchovat asi az zitra.}).

Let us check that our definition of $\inv{\boldf}y$ is
independent of the choice of a representative of $\boldf : \bS \to \bT$.
Consider therefore a representative $\tilde f : \tilde S  \to \tilde
T$ as in~\eqref{Pujde s nami dnes Jarka k indianum?}.
In this situation we have the~diagram
\begin{equation}
\label{Jarka uz dnes podruhe jela.}
\xymatrix@R=10pt@C=20pt{
&&f^{-1}(i) \ar@{~>}[dd]  \ar@{-->}[rr]^{\tilde{\rho_{\it pb}}} && {\tilde f}^{-1}(j) \ar@{~>}[dd]
\\
&\boxed{\mbox{\rm left}} &&\makebox[0pt]{\boxed{\mbox{\rm lifting square}}}&& \boxed{\mbox{\rm right}}
\\ 
(|f|\xi)^{-1}(i) \ar[r]^\xi_\cong& 
|f|^{-1}(i)  \ar[r]^{\it can}_\cong &|f|_{\it pb}^{-1}(i) \ar[rr]^{\rho_{\it pb}} 
&&|\tilde f|_{\it pb}^{-1}(j)&|\tilde f|^{-1}(j)\ar[l]^\cong_{\it can} &
\ar[l]_(.55)\eta^(.55)\cong (|\tilde f| \rho  \xi)^{-1}(j)
}
\end{equation}
in which $j := \sigma^{-1}(i)$, $\rho_{\it pb}$ is the canonical map
between the pullbacks induced by $\rho$,
$\tilde {\rho_{\it pb}}$ its lift, and the meaning of other objects is
similar as in~(\ref{Budu se sprchovat asi az zitra.}). 
Here we \underline{ver}y \underline{crucial}y
needed~(\ref{Zitra pojedeme dale.}) to see that
the target of  $\tilde {\rho_{\it pb}}$ equals the fiber ${\tilde f}^{-1}(j)$.

The left corner in~(\ref{Jarka uz dnes podruhe jela.}) is a representative of
$\inv{\boldf} y$ calculated using $f: S \to T$, the right corner a
representative calculated using $\tilde f: \tilde S \to \tilde T$.
Since clearly $(|f|\xi)^{-1}(i) =  (|\tilde f| \rho  \xi)^{-1}(j)$ and
since the composite 
\[
(|f|\xi)^{-1}(i)   \stackrel \xi\longrightarrow 
|f|^{-1}(i)  \stackrel{{\rm can}}\longrightarrow  |f|_{\it pb}^{-1}(i)
\stackrel{\rho_{\rm pb}}\longrightarrow
 |\tilde f|_{\it pb}^{-1}(j)
\]
equals the composite
\[
(|\tilde f| \rho  \xi)^{-1}(j) \stackrel\eta\longrightarrow 
|\tilde f|^{-1}(j) \stackrel{{\rm can}}\longrightarrow |\tilde f|_{\it pb}^{-1}(j),
\]
the left and the right corners of~(\ref{Jarka uz dnes podruhe jela.}) 
represent the same object of $\E \ttO$,
cf.\ the diagram in~(\ref{Dnes pojedeme na chalupu a tam budeme
  neskutecnych 9 dni.}).
We leave the painstaking verification that the above constructions
produce cloven operadic categories as claimed to the reader.

\begin{proposition}
\label{Vratil jsem se od doktora Reicha.}
There are canonical natural isomorphisms $\R\E
\ttO \cong \ttO$ and \hbox{$\E\R \bO \cong \bO$}  of cloven operadic
categories.
\end{proposition}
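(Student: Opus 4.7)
\medskip
\noindent
\textbf{Proof plan for Proposition~\ref{Vratil jsem se od doktora Reicha.}.}
I begin by writing down the two candidate equivalences explicitly, in both directions. For $\R\E\ttO \cong \ttO$: in one direction, I send $T \in \ttO$ with $|T| = \underline n$ to the equivalence class $[\rep{\underline n}{\id}{n}{T}]$, which is an object of $\E\ttO$ whose cardinality $\underline n$ lies in $\sFin$, hence an object of $\R\E\ttO$; on morphisms, $f : S \to T$ is sent to its own class. In the other direction, the class $[\rep{\underline n}{\sigma}{n}{A}]$ is mapped to the target $A''$ of the lift of $\sigma^{-1}$ at $A$, i.e.\ to the canonical representative with trivial underlying isomorphism. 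For $\E\R\bO \cong \bO$: in one direction, given $\bS \in \bO$ with $|\bS| = X$, choose any isomorphism $\sigma : X \to \underline n$, lift it to $\tilde\sigma : \bS \to \tilde\bS$ with $|\tilde\bS| = \underline n \in \sFin$, and send $\bS$ to $[\rep X\sigma n{\tilde\bS}]$; in the other direction, send $[\rep X\sigma n A]$ to the target of the lift of $\sigma^{-1}$ at $A$. On morphisms one works analogously using the horizontal middle square in~\eqref{Pujde s nami dnes Jarka k indianum?}.

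\medskip
\noindent
Next I verify well-definedness and mutual inverseness. The invariance under the equivalence relation defining objects of $\E(-)$ is a direct use of the functoriality of the cleavage: if $\sigma' = \phi\sigma$ and $A'$ is the target of the lift of $\phi$ at $A$, then the composite lift $\widetilde{\sigma'^{-1}} = \widetilde{\sigma^{-1}\phi^{-1}} = \widetilde{\sigma^{-1}}\, \widetilde{\phi^{-1}}$ hits the same object. On morphisms, the analogous calculation with the two-storey diagram~\eqref{Pujde s nami dnes Jarka k indianum?} shows that two equivalent morphisms of $\E(-)$ are sent to the same morphism. The identities $\Psi\Phi = \id$ and $\Phi\Psi = \id$ both reduce, at the end, to the assertion that the lift of $\id$ is $\id$ and that lifting $\sigma$ and then $\sigma^{-1}$ gives the identity, both of which are part of the cleavage axioms in Definition~\ref{Pojedeme na tu Sumavu?}.

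\medskip
\noindent
It remains to show compatibility with cardinality, fibers, and the cleavages, which I expect to be the main obstacle. Compatibility with the cardinality functor is immediate from the definition $|\boldf| = \eta^{-1}|f|\xi$ once one restricts to canonical representatives. The cleavage on $\E\ttO$ must be identified: an isomorphism $\tau : X' \to X''$ in $\Fin$ is lifted at $[\rep{X'}{\sigma'}{n}{A}]$ by the class of $\id_A : A \to A$, landing in $[\rep{X''}{\sigma'\tau^{-1}}{n}{A}]$; under the bijection with $\ttO$ this reduces, in the skeletal case, to the original cleavage of $\ttO$, and symmetrically for $\E\R\bO \cong \bO$. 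The fiber comparison is the most delicate point: in $\R\E\ttO$ the fiber of $\boldf$ over $i$ is, by the construction in~\eqref{Zitra prece jen na letisti budu.}, obtained from the thick fiber $[\rep{|f|^{-1}(i)}{\it can}{k_i}{f^{-1}(i)}]$ by lifting along the canonical order-preserving isomorphism, which (using the cleavage of $\E\ttO$ identified above) lands on $[\rep{\underline{k_i}}{\id}{k_i}{f^{-1}(i)}]$, matching the fiber $f^{-1}(i)$ of $\ttO$; in the other direction, the lifting square~\eqref{Zitra pojedeme dale.} is exactly what guarantees that the ``right corner'' of~\eqref{Jarka uz dnes podruhe jela.} agrees with the prescribed fiber, so that the fiber computation in $\E\R\bO$ reproduces the fiber in $\bO$.

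\medskip
\noindent
Finally, naturality is checked on a cloven operadic functor $F$ by observing that $F$, commuting with cardinality and sending lifting squares to lifting squares, induces compatible functors $\E F$ and $\R F$, and that the candidate natural isomorphisms defined above commute with $F$ objectwise (essentially because the choice of $\sigma$ in the definition of $\Psi'$ is absorbed by the cleavage-functoriality argument). The hardest part of the whole proof is keeping track, in the fiber verification, of the three distinct lifting squares involved: the one defining the cleavage on $\E\ttO$, the one~\eqref{Zitra prece jen na letisti budu.} used by $\R$, and the one~\eqref{Zitra pojedeme dale.} postulated by the compatibility of cleavage with fibers in $\ttO$; once the diagram~\eqref{Jarka uz dnes podruhe jela.} is unpacked, everything reduces to the functoriality axioms.
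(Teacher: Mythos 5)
Your proof proposal is correct and takes essentially the same approach as the paper: the paper defines the functors $J,I : \R\E\ttO \leftrightarrow \ttO$ and $F,G : \E\R\bO \leftrightarrow \bO$ exactly as you do (the domain of the lift of $\sigma$ ending at $A$, equivalently the target of the lift of $\sigma^{-1}$ at $A$, in one direction; the class of a chosen lifting-square corner in the other), and then remarks that the remaining verifications are routine. Your elaboration of well-definedness via cleavage functoriality, the identification of the induced cleavage on $\E\ttO$, and the fiber comparison through diagrams~\eqref{Zitra prece jen na letisti budu.}, \eqref{Zitra pojedeme dale.} and~\eqref{Jarka uz dnes podruhe jela.} is precisely the ``painstaking verification'' the paper leaves to the reader and is carried out correctly.
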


\begin{proof}
The pair of mutually inverse functors  
$J :\R\E
\ttO \longleftrightarrow \ttO :I$ is given as follows. The objects of~$\R\E
\ttO$ are the equivalence classes of the corners
\[
\xymatrix@R=1.3em{&A\ar@{~>}[d]
\\
{\underline n} \ar[r]^\sigma_\cong  &{\ \underline n\, }, &\hskip -2em A \in \ttO.
}
\]
The functor $J$ assigns to the equivalence class of the above corner
the domain $\tilde A \in \ttO$ of the lift $\tilde{\sigma}$ in
the lifting diagram
\[
\xymatrix@R=1.3em{
\tilde A \ar@{~>}[d] \ar@{-->}[r]^{\tilde{\sigma}}  & A\ar@{~>}[d]
\\
{\underline n}\ar[r]^{\sigma}_\cong   &{\ \underline n\,}.
}
\]
The inverse $I$ of $J$ sends $T \in \ttO$, $|T|={\underline n}$, 
to the equivalence class of the
corner
\[
\xymatrix@R=1.3em@C=3em{&T\ar@{~>}[d]
\\
{\underline n} \ar[r]^{\rm identity}  &{\ \underline n\,}.
}
\]

Let us describe the pair of mutually inverse functors
$F :\E\R \bO \longleftrightarrow \bO :G$. The objects
of~$\E\R \bO$ are the equivalence classes of corners
\[
\xymatrix@R=1.3em{&\bS\ar@{~>}[d]
\\
X \ar[r]^\varkappa_\cong  &{\underline n}, & \hskip -2em \bS \in \bO.
}
\]
The functor $F$ assigns to the related equivalence class
the domain $\tilde \bS \in \bO$ of the lift $\tilde{\varkappa}$ in
the lifting square
\[
\xymatrix@R=1.3em{
\tilde \bS \ar@{~>}[d] \ar@{-->}[r]^{\tilde{\varkappa}}  & \bS\ar@{~>}[d]
\\
X\ar[r]^{\varkappa}_\cong   &{\ \underline n\,}.
}
\]

To describe the value $G\bS \in \E\R \bO$ of the inverse of $F$ at some
$\bS \in \bO$, $|\bS| = X$, 
choose an arbitrary isomorphism $\omega :
X \to {\underline n}$ and take the lift of $\omega$ as in
\begin{equation}
\label{Poleti se uz zitra?}
\xymatrix@R=1.3em{
\bS \ar@{~>}[d] \ar@{-->}[r]^{\tilde{\omega}}  & \tilde \bS\ar@{~>}[d]
\\
X\ar[r]^{\omega}_\cong   &{\ \underline n\,}.
}
\end{equation}
Then 
$G\bS$ is  the equivalence class of the corner
\[
\xymatrix@R=1.3em{ & \tilde \bS\ar@{~>}[d]
\\
X\ar[r]^{\omega}_\cong   &{\ \underline n\,}.
}
\]
It is simple to check that the above construction leads to
well-defined pairs of mutually inverse functors that preserve the
fibers.
\end{proof}

\begin{corollary}
\label{opet dvanactka}
Each thick cloven operadic category is isomorphic to the extension of
a thin cloven operadic category. Reciprocally, each thin cloven 
operadic category is isomorphic to the restriction of
a thick cloven operadic category.
\end{corollary}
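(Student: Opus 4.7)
The plan is to derive this corollary directly from Proposition~\ref{Vratil jsem se od doktora Reicha.}, which has just established the natural isomorphisms $\R\E\ttO \cong \ttO$ and $\E\R\bO \cong \bO$. No further construction is required beyond unpacking these isomorphisms.

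For the first assertion, I would take an arbitrary thick cloven operadic category $\bO$ and set $\ttO := \R\bO$, the restriction constructed in the preceding proof. Since $\R$ was shown to produce a thin cloven operadic category, this $\ttO$ is a legitimate candidate. Then applying $\E$ yields $\E\ttO = \E\R\bO$, which is isomorphic to $\bO$ by the second half of Proposition~\ref{Vratil jsem se od doktora Reicha.}. Hence $\bO$ is isomorphic to the extension of the thin cloven operadic category $\R\bO$.

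For the reciprocal assertion, I would symmetrically take a thin cloven operadic category $\ttO$ and set $\bO := \E\ttO$, which is a thick cloven operadic category by the extension construction. Then $\R\bO = \R\E\ttO \cong \ttO$ by the first half of the proposition, exhibiting $\ttO$ as the restriction of the thick cloven operadic category $\E\ttO$.

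There is really no obstacle here; the corollary is a formal consequence of the invertibility (up to natural isomorphism) of the pair $(\E,\R)$. The only thing worth emphasizing in the write-up is that the isomorphisms of Proposition~\ref{Vratil jsem se od doktora Reicha.} are isomorphisms of cloven operadic categories, so the witnesses produced on both sides are objects of the correct type and the identifications respect cardinalities, fibers and cleavages.
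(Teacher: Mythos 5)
Your proof is correct and is precisely the intended argument: the paper states this as an immediate corollary of Proposition~\ref{Vratil jsem se od doktora Reicha.} without further elaboration, and your unpacking of $\E\R\bO\cong\bO$ with witness $\ttO:=\R\bO$, and dually $\R\E\ttO\cong\ttO$ with witness $\bO:=\E\ttO$, is exactly what is meant. Your closing remark that the isomorphisms are isomorphisms of cloven operadic categories (hence preserve cardinalities, fibers, and cleavages) is the right thing to flag.
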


Before we prove, in Proposition~\ref{unit} below, that the
correspondence of \,$\E : \ttO \leftrightarrow
\bO: \R$ preserves the unitality, we formulate

\begin{lemma}
\label{Pujdeme zitra na obed?}
Let\/ $\bO$ be a thick cloven operadic category 
and $\ttO$ the corresponding thin one. Then there is a natural
isomorphism $\pi_0(\ttO) \cong \pi_0(\bO)$ of the sets of connected components. 
\end{lemma}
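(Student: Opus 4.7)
The natural candidate is the map $\iota : \pi_0(\ttO) \to \pi_0(\bO)$ induced by the full subcategory inclusion $\ttO = \R\bO \hookrightarrow \bO$. My plan is to show $\iota$ is a bijection by using the cleavage to transport objects of $\bO$ into $\R\bO$ along isomorphisms.

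Before anything else, I would record the following consequence of the functoriality axiom in Definition~\ref{Pojedeme na tu Sumavu?}: identities lift to identities, and $\tilde{\sigma}\,\widetilde{\sigma^{-1}} = \widetilde{\sigma\sigma^{-1}} = \tilde{\id} = \id$, so every lift of an isomorphism in $\Fin$ is itself an isomorphism in $\bO$. This single observation is the engine driving both directions.

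For surjectivity, pick an arbitrary $\bS \in \bO$ and any bijection $\omega : |\bS| \stackrel\cong\to \underline n$ in $\Fin$ (that is, any linear ordering of the underlying finite set). The cleavage furnishes a lift $\tilde\omega : \bS \to \tilde\bS$, which is an isomorphism in $\bO$ by the preceding remark, with $|\tilde\bS| = \underline n \in \sFin$; hence $\tilde\bS \in \R\bO$ and $\tilde\omega$ witnesses $[\bS] = \iota([\tilde\bS])$ in $\pi_0(\bO)$. For injectivity, suppose $S, T \in \R\bO$ are joined by a zigzag $S = \bU_0 \leftrightarrow \bU_1 \leftrightarrow \cdots \leftrightarrow \bU_k = T$ in $\bO$. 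For each intermediate $\bU_i \notin \R\bO$, choose any ordering $\omega_i : |\bU_i| \to \underline{n_i}$ and form the isomorphism $\tilde\omega_i : \bU_i \stackrel\cong\to \tilde\bU_i \in \R\bO$. Replacing each $\bU_i$ by $\tilde\bU_i$ and pre- or post-composing the arrows in the zigzag with $\tilde\omega_i^{\pm 1}$ as appropriate produces a zigzag whose vertices all lie in $\R\bO$; by fullness of the inclusion this is already a zigzag in $\ttO$, so $[S] = [T]$ in $\pi_0(\ttO)$.

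Finally, naturality in $\bO$ is immediate: a cloven operadic functor preserves the cardinality functor (hence sends $\R$-objects to $\R$-objects) and, by definition, preserves the chosen lifts, so it commutes with both the inclusion and the lifting construction used above. The only substantive point to make explicit is the invertibility of lifts of isomorphisms; everything else is bookkeeping with the zigzag.
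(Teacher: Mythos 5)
Your proof is correct and follows essentially the same route as the paper's: both arguments exploit the full inclusion of $\ttO$ into $\bO$ (you normalize via $\ttO = \R\bO$, the paper via $\bO = \E\ttO$; these are interchangeable by Corollary~\ref{opet dvanactka}), establish surjectivity by lifting an arbitrary $\bS$ along a chosen ordering, and establish injectivity by "straightening" each intermediate vertex of a zigzag with a lift of a chosen ordering. You additionally spell out that lifts of isomorphisms are isomorphisms and say a word about naturality, which the paper leaves implicit, but the core argument is the same.
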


\begin{proof}
We may assume that $\bO = \E \ttO$, by Corollary~\ref{opet dvanactka},
so we have the inclusion $\ttO \hookrightarrow \bO$ of categories. 
Since each object of $\bO$ is
isomorphic to some object of $\ttO$, cf.~diagram~\eqref{Poleti se uz
  zitra?}, the inclusion induces an epimorphism $\pi_0(\ttO)
\twoheadrightarrow \pi_0(\bO)$. To show that is an isomorphism, we
need to prove that two objects $S$ and $T$, connected by a
zigzag of maps in $\bO$, are connected by a zigzag in $\ttO$.

This  statement follows from the definition of morphisms in $\E
\ttO$, but we prove it directly. Consider e.g.\ the simple zigzag
\begin{equation}
\label{Treti den zavodu a jeste se neletelo.}
\xymatrix{S\ar@{~>}[d]  \ar[r]^{\boldf}\ar@{~>}[d]  
& \ar@{~>}[d] \bR & \ar[l]_\bg  T\ar@{~>}[d] 
\\
{\underline m}  \ar[r]^{|\boldf|}  & X & {\underline n} \ar[l]_{|\bg|}
}
\end{equation}
with $S,T \in \ttO$ and $\bR \in \bO$. Choose an isomorphism $\omega :
X \to \underline k$ and let $R$ be the target of the lift $\tilde \omega$ in
the lifting square
\[
\xymatrix@R=1.3em{
\bR \ar@{~>}[d] \ar@{-->}[r]^{\tilde{\omega}}  & R\ar@{~>}[d]
\\
X\ar[r]^{\omega}_\cong   &\underline k.
}
\] 
Then we replace the zigzag in~\eqref{Treti den zavodu a jeste se
  neletelo.} in $\bO$
by the zigzag $\xymatrix@1{S\ar[r]^f   &R&T \ar[l]_g}$ in $\ttO$, where $f
= \tilde \omega \boldf$ and  $g = \tilde \omega \bg$. More complex
zigzags can be treated similarly. 
\end{proof}

\begin{proposition}
\label{unit}
Let $\ttO$ be a thin cloven operadic category and \,$\bO$ the
corresponding thick one. Then the category $\ttO$ is unital 
if and only if \,$\bO$ is unital.
\end{proposition}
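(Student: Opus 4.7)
The plan is to invoke Corollary~\ref{opet dvanactka} to identify $\bO = \E\ttO$, and Lemma~\ref{Pujdeme zitra na obed?} to identify $\pi_0(\ttO) = \pi_0(\bO)$. Under the inclusion $\ttO \subset \bO$ realized by $T \mapsto \rep{\underline n}{\id}nT$, every object of $\bO$ becomes isomorphic to an object of $\ttO$ through the lift of an arbitrary isomorphism $X \to \underline n$, cf.~\eqref{Poleti se uz zitra?}. The key preliminary observation, which I would verify first, is that whenever a morphism $\boldf : \bS \to \bT$ in $\bO$ is represented by $f : S \to T$ in $\ttO$ and $|f|^{-1}(i)$ already lies in $\sFin$, the thick fiber $\inv\boldf i$ coincides, under the isomorphism $\R\E\ttO \cong \ttO$, with the thin fiber $\inv f i$. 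In every case relevant below this condition holds trivially---either $f$ is an identity, so the preimage is the singleton $\{i\}$, or the codomain is a local terminal of cardinality $\underline 1$, so the preimage is all of $|S|$---and diagram~\eqref{Budu se sprchovat asi az zitra.} reduces to the object $\inv f i$ itself via identity maps.

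For the forward direction, I would assume $\ttO$ is unital with family $\{U_c\}$, $|U_c| = \underline 1 = \{1\}$, and set $\bU_c := U_c$. The local terminal property of $\bU_c$ in its connected component of $\bO$ follows because any $\bS \in \bO$ is isomorphic (via a lift) to some $S \in \ttO$ lying in the same component by the $\pi_0$ identification, and morphisms $\bS \to \bU_c$ in $\bO$ are equivalence classes of morphisms $S \to U_c$ in $\ttO$ under the relation in~\eqref{Dnes pojedeme na chalupu a tam budeme neskutecnych 9 dni.}, which are unique by local terminality in $\ttO$. For left unitality, given $\bT \in \bO$ with $|\bT| = \{1,\ldots,n\} = \underline n$, represent $\bT$ by $T \in \ttO$; the preliminary observation and left unitality of $\ttO$ then give $\inv{\id_\bT}1 = \inv{\id_T}1 = U_c = \bU_c$. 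For right unitality, a morphism $\boldf : \bS \to \bU_c$ is represented by some $f : S \to U_c$ in $\ttO$, and $\inv\boldf 1 = \inv f 1 = S$ by right unitality of $\ttO$, whose equivalence class is $\bS$.

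The converse is symmetric: if $\bO$ is unital with family $\{\bU_c\}$, then $|\bU_c| = \{1\} = \underline 1$ forces each $\bU_c$ to lie in $\R\bO \cong \ttO$, so setting $U_c := \bU_c$ recovers a valid family of local terminal objects in $\ttO$. Both unit axioms for $\ttO$ then follow from the same fiber identification, read in the opposite direction, together with the fact that morphisms in $\ttO$ embed as a subset of morphisms in $\bO$.

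The main obstacle will be the rigorous verification of the preliminary identification of thick and thin fibers. The issue is not conceptual but bookkeeping: one must check that in all cases considered, the canonical order-preserving isomorphism ${\it can}$ and the lifting square~\eqref{Zitra prece jen na letisti budu.} underlying the restriction are both trivial, so that no nontrivial cleavage action interferes. Once this is settled, the unit axioms transport verbatim between the thin and thick frameworks and the rest of the proposition follows without further work.
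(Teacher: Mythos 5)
Your proof is correct and takes essentially the same approach as the paper: both reduce to the identification of Corollary~\ref{opet dvanactka} and Lemma~\ref{Pujdeme zitra na obed?} and observe that the chosen local terminal objects of $\ttO$ and $\bO$ coincide under that identification (the paper works with $\ttO = \R\bO$ where the lifting square shows $U_c = \bU_c$ directly; you work symmetrically with $\bO = \E\ttO$). Your version is substantially more explicit than the paper's, which stops at the terminal-object identification and leaves the transport of the unitality axioms to the reader; your preliminary observation about the agreement of thick and thin fibers in the cases $\boldf = \id$ and codomain a local terminal, where the diagram~\eqref{Budu se sprchovat asi az zitra.} degenerates to identities, is precisely the content the paper tacitly suppresses.
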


\begin{proof}
By Lemma~\ref{Pujdeme zitra na obed?}, the sets of connected
components of the respective categories are isomorphic. 
Referring to Corollary~\ref{opet dvanactka} we may assume that $\ttO =
\R\bO$. The correspondence between the chosen local terminal objects
in $\ttO$ and the chosen local terminal objects $\bU_c$ in $\bO$ is
described by the lifting square 
\[
\xymatrix@R=1.3em{
U_c \ar@{~>}[d] \ar@{-->}[r]^{\id}  & \bU_c \ar@{~>}[d]
\\
\underline 1\ar[r]^\id   &\{1\}
}
\]
in $\bO$ which actually says that $U_c = \bU_c$. 
\end{proof}

\begin{example}
\label{Stale nevim jako to bude.}
As expected, the category $\sFin$ of finite ordinals and the
category $\Fin$ of finite sets correspond to each other in the
correspondence of Theorem~\ref{Dnes jedu do Kolina a zitra letim do
  Dvora.}. The thick counterpart of the terminal thin unary operadic
category $\tt 1$ is the big groupoid $\mathfrak G$ of one-point sets.   
\end{example}

\vskip .5em
\noindent 
{\bf Operads.}
In  Proposition~\ref{Odpoledne zavolam.} below, $\ttO$ will be a
thin cloven operadic category and $\bO$ the corresponding thick
one, in the correspondence of Theorem~\ref{Dnes jedu do Kolina a zitra letim do
  Dvora.}. We will also assume that the symmetric monoidal category $\ttV$
where our operads live is cocomplete. The notion of categories of
operads is the expected one -- their objects are operads and
their morphisms are morphisms of the underlying collections which commute
with the composition laws, and units in the unital~case.

\begin{proposition}
\label{Odpoledne zavolam.}
The categories of \/ $\ttO$-operads and $\bO$-operads are naturally
equivalent. If \ $\ttO$ or, which is the same, $\bO$ is unital, this
equivalence restricts to the subcategories of unital operads. 
\end{proposition}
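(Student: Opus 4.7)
The plan is to mirror the equivalence $\R : \bO \leftrightarrow \ttO : \E$ of Proposition~\ref{Vratil jsem se od doktora Reicha.} at the level of operads, by building mutually quasi-inverse functors between the two operad categories and checking they preserve (and reflect) the operad structure.

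First I would define the restriction of a $\bO$-operad $\boP$ to an $\ttO$-operad by setting $\oP(T) := \boP(T)$ for $T \in \ttO$, viewed as an object of $\bO$. Since morphisms of $\ttO$ are morphisms of $\bO$, the composition laws, cloven module action, and unit maps transfer by mere restriction. The only point requiring attention is the identification of fibers: the fiber $\inv fi$ of $f : S \to T$ in $\ttO = \R\bO$ is obtained from $\inv \boldf i$ in $\bO$ via the lifting square~\eqref{Zitra prece jen na letisti budu.}, and the cloven module action of $\boP$ supplies the canonical isomorphism $\boP(\inv \boldf i) \cong \oP(\inv fi)$ needed to transport $\bmu_\boldf$ to $\mu_f$.

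In the other direction, given an $\ttO$-operad $\oP$ I would extend it to $\bO \cong \E\ttO$ as follows. Each $\bS \in \bO$ is represented by a corner $\rep X\sigma nA$ with $A \in \ttO$; set $\boP(\bS) := \oP(A)$. If a second representative $\rep X{\sigma'}n{A'}$ is related to the first by an automorphism $\phi : \underline n \to \underline n$ as in diagram~\eqref{Dnes pojedeme na chalupu a tam budeme neskutecnych 9 dni.}, the cloven module action of $\oP$ provides a canonical isomorphism $\oP(A') \to \oP(A)$, and functoriality of the cleavage makes these coherent -- equivalently, $\boP(\bS)$ can be taken as the (absolute) colimit over representatives. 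For $\boldf : \bS \to \bT$ represented by $f : S \to T$ as in~\eqref{Dnes jdu s Jarkou na koncert.}, the fiber description~\eqref{Budu se sprchovat asi az zitra.} lets me declare $\bmu_\boldf := \mu_f$. The cloven action of an arbitrary isomorphism $\sigma : X \to Y$ in $\Fin$ on $\boP$ is then generated by the cloven action of $\oP$ on the automorphisms of $\underline n$ that arise from changing the labelling of $X$.

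Next I would verify the operad axioms: associativity (i) of Definition~\ref{Na Safari oper strasne pocasi.} for $\boP$ follows from axiom (i) of Definition~\ref{Zacina byt poradna zima, ale ne takova aby zamrzlo Hradistko.} for $\oP$, since any composable pair $\bT \to \bS \to \bR$ in $\bO$ is equivalent via lifts to a composable pair in $\ttO$; compatibility with the cloven action (axiom (ii)) is built into the construction, as any lifting square in $\bO$ decomposes into one already lying in $\ttO$ and a change-of-representative square whose operadic action is the defining isomorphism of $\boP$. For the unital case, Proposition~\ref{unit} yields $U_c = \bU_c$, so the unit maps $\eta_c$ are literally the same, and the morphisms $\bha_c^{\{x\}}$ of~\eqref{Poletime zitra nebo v nedeli?} are generated automatically by the cloven action. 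The hard part will be the careful bookkeeping of the equivalence relation on corners in $\E\ttO$: showing independence of $\boP$ on representatives and compatibility with both associativity and the cloven action amounts to unwinding the cleavage compatibility~\eqref{7my den na chalupe} together with axiom (ii) of Definition~\ref{Zacina byt poradna zima, ale ne takova aby zamrzlo Hradistko.}. Once this coherence is established, checking that extension and restriction are quasi-inverse as functors between the operad categories is routine.
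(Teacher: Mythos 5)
Your proposal is correct and matches the paper's argument: the restriction is defined by literal restriction combined with the cloven-module isomorphism relating the thin and thick fibers, and the extension is defined on representatives with well-definedness handled by a colimit over the groupoid of labellings (the paper's $\mathtt{G}(X)$ is exactly your groupoid of representatives of a corner). The paper fills in the detail you flag as ``bookkeeping,'' namely the explicit isomorphisms $\pi_\sigma$ and $\pi_{\rho\sigma}$ used to transport $\mu_{f_{\rho\sigma}}$ to $\bmu_{\boldf}$, but the architecture is the same.
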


\begin{proof}
In view of Corollary~\ref{opet dvanactka} we may assume that $\ttO = \R\bO$.
Given an $\bO$-operad $\boP$, the
underlying collection of the corresponding $\R\bO$-operad $\oP$ 
is the restriction of the underlying collection of $\boP$ to objects of
$\bO$ whose arities are ${\underline n}\in \sFin$, $n \geq 0$. Let us define the
composition laws of $\oP$. 

Consider  a morphism  $\boldf :
S \to T$ of $\bO$, with $|S| = \underline m$, $|T| =
\underline n$,  and denote by $f : S \to T$ the same morphism
interpreted as a morphism in $\R\bO$. Let $\inv {\boldf}i$ be
the fiber of $\boldf$ over $i$ in the category $\bO$, and  $\inv {f}i$ the
$i$th fiber of $f$ in $\ttO$, $i \in \underline n$.
They are connected by the isomorphism 
$\tilde {\sigma_i} :\inv\boldf i  \to \inv f i$ in the lifting
square~\eqref{Zitra prece jen na letisti budu.}. The cloven module
structure of $\boP$ induces the isomorphism
\[
\boP(\sigma) :
\xymatrix@1@C=4em{\oP(f) \cong  \displaystyle\bigotimes_{i\in \underline n} \  \inv f i 
\ar[r]^{\bigotimes_{i\in \underline n} \boP({\sigma_i})}
  & \ \displaystyle \bigotimes_{i\in \underline n}
  \inv{\boldf} i \cong \boP(\boldf).
}
\]
The composition law $\mu_f: \oP(T) \ot \oP(f) \longrightarrow \oP(S)$ of $\oP$ 
is then defined by the diagram
\[
\xymatrix@C=5em{
\oP(T) \ot \oP(f)   \ar[d]_{\id\ \ot \boP(\sigma)}   
\ar[r]^{\mu_f}& \oP(S) \ar@{=}[d]
\\
\boP(T) \ot \boP(\boldf) \ar[r]^{\bmu_\boldf}& \ \boP(S),
}
\]
where $\bmu_\boldf$ in the bottom line is the composition law of $\boP$.

Let us explain how an $\R\bO$-operad $\oP$ determines the corresponding $\bO$-operad
$\boP$. Each \hbox{$X \in \Fin$} determines the
groupoid ${\tt G}(X)$, whose 
objects are isomorphisms $\sigma: X \stackrel\cong\to \underline n$ with some
(necessarily unique) 
$\underline n$, and morphisms $\Omega :\sigma' \to \sigma''$ are 
commutative diagrams
\begin{equation}
\label{Musim zacit uhanet grantisty a ty dva lemply.}
\xymatrix@R=1.2em@C=1.2em{& \ar[dl]_{\sigma'} \ar[dr]^{\sigma''}   X&
\\
\underline n \ar[rr]^\omega && \underline n.
}
\end{equation}
of isomorphisms in $\Fin$.
For each $\bS\in \bO$ with $|\bS| = X$, 
$\oP$ defines a contravariant functor  \hbox{$\oP_\bS:{\tt G}(X) \to \ttV$}
as follows. 
Its value $\oP(\sigma)$ on $\sigma :X \stackrel\cong\to \underline n \in {\tt G}(X)$ is $\oP(S_\sigma)$, where
$S_\sigma$ is the right corner of the lifting square
\begin{equation}
\label{Dneska si Jarka vezme tac.}
\xymatrix@R=1.3em{\bS \ar@{-->}[r]^{\tilde \sigma}\ar@{~>}[d] & S_\sigma\ar@{~>}[d]
\\
X\ar[r]^{\sigma}_\cong   &{\underline n}
}
\end{equation}
in $\bO$.
The value  $\oP_\bS(\Omega)$ on the
morphism $\Omega$ in~\eqref{Musim zacit uhanet grantisty a ty dva
  lemply.} is given as follows. 
The commutative diagram in~\eqref{Musim zacit uhanet grantisty a ty dva
  lemply.} determines the diagram of three lifting squares
\[
\xymatrix@C=4em{
\ar@/^2em/@{-->}[rr]^{\tilde{\omega}}
S_{\sigma'} \ar@{<--}[r]^{\tilde {\sigma'}} \ar@{~>}[d]& \bS \ar@{-->}[r]^{\tilde {\sigma''}}  \ar@{~>}[d]&S_{\sigma''} \ar@{~>}[d]
\\
\underline n \ar@{<-}[r]^{\sigma'}_\cong 
\ar@/_2em/[rr]^{{\omega}}_\cong
&X  \ar[r]^{\sigma''}_\cong&
\underline n  
}
\]  
with commutative upper and lower bases. The cloven module action on $\oP$ induces
the morphism $\oP(\omega) : \oP(S_{\sigma''}) \to  \oP(S_{\sigma'})$
which we interpret as $\oP_\bS(\Omega): \oP(\sigma'') \to
\oP(\sigma')$. This finishes the definition of the functor $\oP_\bS$.
The piece $\boP(\bS)$ of the 
underlying collection of the $\bO$-operad $\boP$ is the colimit
\begin{equation}
\label{Pomuze si tim nejak?}
\boP(\bS) := \colim  \oP_{\!\bS}, \ \bS \in \bO.
\end{equation}
Since the indexing category ${\tt G}(X)$ is a groupoid, each
isomorphism $\sigma :
X \stackrel\cong\to \underline n$ determines a natural isomorphism
\begin{equation}
\label{Jarka dnes zustava doma.}
\pi_\sigma : \oP(S_\sigma) \stackrel\cong\longrightarrow \boP(\bS).
\end{equation}

Assume that $\boldf : \bS \to \bT$ is a morphism in $\bO$, $|\bS| = X$,
$|\bT| = Y$. To define
the associated composition law $\bmu_\boldf : \boP(\bT) \ot
\boP(\boldf) \to \boP(\bS)$, choose two 
isomorphisms $\rho : X \stackrel\cong\to \underline
m$, $\sigma : Y \stackrel\cong\to \underline
n$, and construct the diagram
\begin{equation}
\label{Prvni den piji 3 litry.}
\xymatrix@C=2.3em@R=1.5em{& 
  \ar@{-->}[rr]^{f_{\rho\sigma}} \ar@{~>}'[d][dd] S_\rho
&& T_\sigma \ar@{~>}[dd]
\\
\bS \ar@{-->}[ur]^{\tilde \rho}  \ar[rr]^(.6)\boldf  \ar@{~>}[dd] && 
\bT \ar@{-->}[ur]^{\tilde
  \sigma}  
\ar@{~>}[dd]
\\
&\underline m\ar@{-->}'[r]^(.7){|f_{\rho\sigma}|}[rr] && \underline  n
\\
X \ar[rr]^{|\boldf|}\ar[ur]^\rho_\cong  && Y \ar[ur]^\sigma_\cong
}
\end{equation}
in which $f_{\rho\sigma} := {\tilde \sigma} \boldf {\tilde \rho}^{-1}$
is a morphism in $\ttO = \R \bO$ and the left and right faces are
lifting squares.
Given $y \in Y$ and $i := \sigma(y)
\in \underline n$, the diagram~\eqref{7my den na chalupe} associated
to~(\ref{Prvni den piji 3 litry.}) 
leads to the left lifting square~of
\begin{equation*}
\xymatrix{
\inv {\boldf}y \ar@{~>}[d]  \ar[r]^{\tilde {\rho_y}}  & 
 \ar[r]^{\tilde {\rho_i}} \ar@{~>}[d] \boldf_{\rho\sigma}^{-1}(i)
 &\ar@{~>}[d] 
\inv {f_{\rho\sigma}}i 
\\
\inv{|\boldf|}y \ar[r]^{\rho_y}  &  \ar[r]^{\rho_i}
\inv{|\boldf_{\rho\sigma}|}i 
&\ \inv{|f_{\rho\sigma}|}i \,.
}
\end{equation*}
The right lifting square expresses the relation between the fiber
$\boldf_{\rho\sigma}^{-1}(i)$ of $f_{\rho\sigma}$ considered as 
a~morphism in $\bO$, and the fiber $\inv {f_{\rho\sigma}}i$ 
of $f_{\rho\sigma}$ in $\ttO =
\R\bO$, cf.~\eqref{Zitra prece jen na letisti budu.}.
The isomorphisms 
\[
\pi_{\rho_i\sigma_y}:  \oP(\inv
{f_{\rho\sigma}}i)  \stackrel\cong\longrightarrow
\boP(\inv{\boldf}y),
\ y \in Y,\ i := \sigma(y),
\]
assemble to an isomorphism 
$\pi_{\rho\sigma}: \oP({f_{\rho\sigma}}) \cong \boP(\boldf)$. 
We finally define the composition
law $\bmu_\boldf$ of $\boP$ via
the diagram 
\[
\xymatrix@C=3em{
\boP(\bT) \ot \boP(\boldf) \ar[r]^(.6){\bmu_{\boldf}}   & \boP(\bS)
\\
\oP(T_\sigma) \ot \oP(f_{\rho\sigma})
\ar[r]^(.6){\mu_{f_{\rho\sigma}}}
\ar^{\pi_\sigma \ot \pi_{\rho\sigma}}_\cong[u]& \oP(S_\rho)\ar_{\pi_\rho}^\cong[u]
}
\]
in which  $\mu_{f_{\rho\sigma}}$ is composition law of the 
$\ttO$-operad $\oP$. It can be verified that this definition does not depend on the
choices of the isomorphisms $\rho$ and $\sigma$ in~\eqref{Prvni den piji 3 litry.}. 

It remains to discuss the unitality, assuming as before that $\ttO = \R\bO$. Let
us invoke Lemma~\ref{Pujdeme zitra na obed?} and take $c \in
\pi_0(\ttO) \cong \pi_0(\bO)$.  
The chosen local terminal object $U_c$ of~$\ttO$ is related to the
corresponding chosen local terminal object of $\bO$ by the trivial lifting
square
\[
\xymatrix@R=1.3em{\bU_c \ar@{-->}[r]^{\id}\ar@{~>}[d] & U_c\ar@{~>}[d]
\\
\{1\}\ar[r]^\id   &{\underline 1}
}
\]
If an $\ttO$-operad  $\oP$ is the restriction of an $\bO$-operad
$\boP$ to $\ttO$, then $\oP(U_c)
= \boP(\bU_c)$ since $\bU_c \in \ttO =
\R\bO$. We define the unit maps for $\oP$ using the unit
maps $\eta_c : \unit \to \boP(\bU_c)$ of $\boP$ as the composite
\[
\unit\stackrel{\eta_c}\longrightarrow  \boP(\bU_c) \stackrel\id\longrightarrow  \oP(U_c).
\]
Let, on the other hand, $\boP$ be the extension of $\oP$. Then the
unit maps of  $\boP$ are defined using the unit maps  $\eta_c : \unit
\to \oP(U_c)$ of $\oP$ as the composite
\[
\unit \stackrel{\eta_c}\longrightarrow \oP(U_c)
\stackrel{\pi_\id}\longrightarrow \boP(\bU_c)
\]
which uses the canonical isomorphism~\eqref{Jarka dnes zustava doma.}.
We graciously leave to the reader the arduous task of verifying that
the above constructions do what we claim they do.
\end{proof}

\begin{example}
The necessity of the compatibility~\eqref{Vcera jsem byl se Sambou v
  Krkonosich.} of the composition law~\eqref{structure operations} 
with the cloven action for Proposition~\ref{Odpoledne zavolam.} to
hold can be seen in the following simple example. 
Let $\tt 1$ be the terminal unary thin operadic category,
cf.~non-Example~\ref{tyden po vysetreni}, and
$\mathfrak G$  its thick counterpart introduced 
in~Example~\ref{Stale nevim jako to bude.}.
While the $\tt 1$-operads are associative algebras, the $\mathfrak
G$-operads without~\eqref{Vcera jsem byl se Sambou v
  Krkonosich.} are families $\{\boP(x) \,|\, \{x\}
\in \mathfrak G\}$ indexed by singletons, with left actions
\[
\vartriangleright \, : \boP(x) \ot \boP(y) \longrightarrow \boP(y)
\]
such that the diagram
\[
\xymatrix{\boP(x) \ot \boP(y) \ot \boP(z)
  \ar[d]_{\vartriangleright \ot \id}
\ar[r]^(.57){\id\, \ot \vartriangleright}
& \boP(x) \ot \boP(z) \ar[d]^\vartriangleright
\\
\boP(y) \ot \boP(z) \ar[r]^\vartriangleright   & \boP(z)
}
\] 
commutes for each singletons $\{x\}, \{y\},\{z\} \in \mathfrak G$.
  \end{example}

\vskip .5em
\noindent 
{\bf Algebras.}
In the following proposition, $\ttO$ will be a thin cloven operadic
category, $\bO$ the corresponding thick one, $\oP$ an $\ttO$-operad and
$\boP$ the corresponding $\bO$-operad. The notion of categories of
algebras is again the expected one.

\begin{proposition}
\label{Dnes mam silnou socialni fobii.}
The categories of \/ $\boP$- and \, $\oP$-algebras are naturally
equivalent. If we declare collections with canonically isomorphic
indexing sets to be {\/\em the same\/}, then this equivalence is
actually an isomorphism of categories.  
\end{proposition}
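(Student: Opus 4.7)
\vskip .5em
\noindent
\textbf{Proof proposal.}
By Corollary~\ref{opet dvanactka} we may assume $\ttO = \R\bO$, and by Lemma~\ref{Pujdeme zitra na obed?} the indexing sets $\pi_0(\ttO)$ and $\pi_0(\bO)$ are canonically identified. Under this identification I will construct mutually inverse functors between the category of $\boP$-algebras and the category of $\oP$-algebras. In the direction $\boP\text{-alg}\to\oP\text{-alg}$, given $\bA=\{\bA_c\}_{c\in\pi_0(\bO)}$ with structure maps $\bal_\bT$, I simply set $A_c:=\bA_c$ and $\alpha_T:=\bal_T$ for each $T\in\ttO\subset\bO$. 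Since for $T\in\ttO$ the lists $\bs(T)$ and $s(T)$ agree, and $\bt(T)=t(T)$, this restriction is immediately well-typed; the three axioms of Definition~\ref{Jaruska uvarila brokolici.} follow by restricting those of Definition~\ref{Vezme to?}.

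For the reverse direction, given an $\oP$-algebra $A=\{A_c\}$, I set $\bA_c:=A_c$. For an arbitrary $\bT\in\bO$ with $|\bT|=Y$, choose an isomorphism $\sigma:Y\stackrel\cong\to\underline n$ and form the lift $\tilde\sigma:\bT\to T_\sigma$ as in~\eqref{Dneska si Jarka vezme tac.}. The cloven-module isomorphism $\boP(\sigma):\boP(T_\sigma)\stackrel\cong\to\boP(\bT)$ from~\eqref{Jarka dnes zustava doma.}, combined with the bijection $\bs(\sigma):\bs(\bT)\stackrel\cong\to \bs(T_\sigma)=s(T_\sigma)$ provided by Lemma~\ref{Mel bych to vydrzet.}, and the equality $\bt(\bT)=t(T_\sigma)$, let me define
\[
\bal_\bT:\boP(\bT)\otimes \bA_{\bs(\bT)}\xrightarrow{\boP(\sigma)^{-1}\otimes \bA_{\bs(\sigma)}} \oP(T_\sigma)\otimes A_{s(T_\sigma)}\xrightarrow{\alpha_{T_\sigma}} A_{t(T_\sigma)}=\bA_{\bt(\bT)}.
\]
Independence of $\sigma$ reduces, via two such choices $\sigma',\sigma''$ and the morphism $\omega=\sigma''(\sigma')^{-1}$ of~\eqref{Musim zacit uhanet grantisty a ty dva lemply.}, to axiom (ii) of Definition~\ref{Jaruska uvarila brokolici.} for $A$ applied to the lift $\tilde\omega:T_{\sigma'}\to T_{\sigma''}$; functoriality of the cleavage makes this square commute. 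Note that when $\bT\in\ttO$ one may take $\sigma=\id$ and the two constructions are visibly inverse to each other on objects.

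The three axioms of Definition~\ref{Vezme to?} for $\bA$ must now be checked. Compatibility with the cloven module action of $\boP$ (axiom (ii) of Definition~\ref{Vezme to?}) is immediate from well-definedness: it simply says that two choices of $\sigma$ produce the same $\bal_\bT$, so this axiom holds \emph{automatically}, not by an extra assumption on $A$. Unitality (axiom (iii)) follows from the corresponding axiom for $A$ together with the identification of $U_c$ and $\bU_c$ used in the proof of Proposition~\ref{unit} and from the construction of unit maps in the proof of Proposition~\ref{Odpoledne zavolam.}. Associativity (axiom (i)) is the main step: given $\boldf:\bS\to\bT$, I choose isomorphisms $\rho:|\bS|\stackrel\cong\to\underline m$, $\sigma:|\bT|\stackrel\cong\to\underline n$ and form the lift $f_{\rho\sigma}:S_\rho\to T_\sigma$ in $\ttO$ as in~\eqref{Prvni den piji 3 litry.}; the fiber isomorphism $\pi_{\rho\sigma}:\oP(f_{\rho\sigma})\cong\boP(\boldf)$ built in the proof of Proposition~\ref{Odpoledne zavolam.} translates the $\bO$-diagram~\eqref{Sel jsem spat ve tri hodiny.} into the thin analog for $f_{\rho\sigma}$, which commutes by axiom (i) for the $\oP$-algebra $A$. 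The compatibility of $\pi_{\rho\sigma}$ with the structure maps of $\boP$---already established when constructing the $\bO$-operad from $\oP$---is what makes this translation literal.

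The hardest part is the associativity check, because the fiber isomorphisms in the thick setting are obtained by stacking several lifting squares (one for the outer map and one for each induced map on fibers), and the coherence axiom (iii) of Definition~\ref{Uz treti tyden je strasliva zima.} is needed to ensure that the resulting reindexing of sources matches on the nose with the identifications used to rewrite $\bA_{\bs(\bT)}$ through the fibers of $\boldf$. Once these bookkeeping steps are pinned down, morphisms of algebras go through trivially: a morphism $\bA\to\bA'$ restricts to a morphism of $\oP$-algebras, and conversely a morphism of $\oP$-algebras extends by the same formula because every $\bT\in\bO$ is connected to some $T_\sigma\in\ttO$ by an isomorphism on which the intertwining condition is automatic. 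Finally, because the identifications $A_c=\bA_c$ are equalities (not merely natural isomorphisms) once we identify $\pi_0(\ttO)$ with $\pi_0(\bO)$, the equivalence is an isomorphism of categories in that strict sense, proving the second assertion.
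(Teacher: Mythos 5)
Your proposal is correct and takes essentially the same route as the paper: the paper defines $\bal_\bS$ by passing to the colimit~\eqref{Pomuze si tim nejak?} over the connected groupoid ${\tt G}(X)$, which is logically the same as your "choose a representative $\sigma$ and verify independence via the $\tilde\omega$ square and axiom~(ii) of Definition~\ref{Jaruska uvarila brokolici.}," and both directions agree with the paper's restriction/extension. You do flesh out the associativity check and the automaticity of axiom~(ii) of Definition~\ref{Vezme to?} in more detail than the paper (which relegates these to "it is simple to verify"), but this is an elaboration of the same argument, not a different one.
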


\begin{proof}
By definition, the underlying collection of a $\oP$-algebra $A$ is of the
form $\{A_c\}_{c
\in \pi_0(\ttO)}$, while the underlying collection of a $\boP$-algebra $\bA$
is of the form $\{\bA_c\}_{c \in \pi_0(\bO)}$. In view of
Lemma~\ref{Pujdeme zitra na obed?} which we use to identify
$\pi_0(\ttO)$ with $\pi_0(\bO)$, the underlying  collections of
$\oP$-algebras and $\boP$-algebras are therefore of the same
form. The equivalence of the proposition acts as the identity on the
underlying collections.

Replacing the operadic category $\ttO$ by an isomorphic
one if necessary, we may assume, as in the proof of Proposition~\ref{Odpoledne
  zavolam.}, that $\ttO = \R \bO$, and refer to the explicit correspondence
between \hbox{$\bO$-operads} and $\ttO$-operads constructed there.

If $\bA = \{\bA_c\}_{c \in \pi_0(\bO)}$ is a $\boP$-algebra, the  
$\oP$-action on the corresponding $\oP$-algebra $A= 
\{A_c\}_{c \in \pi_0(\ttO)}$ is the
restriction of the $\boP$-action to the objects 
of~$\R \bO$. On the other hand, given a $\oP$-algebra $A$, 
we have, in the notation of the second part of the proof of
Proposition~\ref{Odpoledne zavolam.}, cf. 
diagrams~\eqref{Musim zacit uhanet grantisty a ty
  dva lemply.}--\eqref{Jarka dnes zustava doma.} in particular, 
the diagram
\[
\xymatrix{
\oP(S_{\sigma''}) \ot  A_{s(S_{\sigma''})} 
\ar[r]^(.66){\alpha_{S_{\sigma''}}} 
\ar[d]_{\oP(\omega) \otimes A_{s(\omega)}} & \ar@{=}[d]
  A_{t(S_{\sigma''})} 
\\
\ar[r]^(.66){\alpha_{S_{\sigma'}}}
\oP(S_{{\sigma'}}) \ot  A_{s(S_{\sigma'})}  &
  A_{t(S_{{\sigma'}})} .
}
\] 
which is commutative 
by item (ii) of Definition~\ref{Jaruska uvarila brokolici.}. Passing to the
colimit~\eqref{Pomuze si tim nejak?} gives the action 
\[
\bal_\bS :\boP(\bS) \ot A_{\bs(\bS)} \to A_{\bs(\bT)}.
\]
It is simple to verify that the above constructions preserve unitality.
\end{proof}

\section{Graphs, modular operads, \&c}
\label{co tri dny}

The aim of this section is to introduce a thick version of the operadic
category of graphs and show how they describe modular, resp.~odd
modular operads in the form of~\cite[Definitions~A.1,~A.2]{kodu} when arities 
are finite sets rather than natural
numbers. The presentation of the category of graphs 
here follows closely~\cite[Section~3]{env}. We will
ignore the genus grading of (odd) modular operads, as it does not 
bring anything conceptually new to the picture.

\begin{definition} 
\label{pre}
A {\em thick graph\/} $\Gamma$ is a pair $(g,\sigma)$ consisting
of an  order-preserving map 
\[
g:F\to V, \ V\ne \emptyset,
\] 
in the category $\Fin$ of finite sets together with an
involution $\sigma$ on $F$.
\end{definition}

Elements of $\Flag(\Gamma) := F$ are the {\em flags\/} 
(also called {\em half-edges\/})
of $\Gamma$ and elements of $\Vert(\Gamma) := V$ are its
{\em vertices\/}. The elements of the set $\Leg(\Gamma)\subset F$ of fixed
points of $\sigma$ are called the {\em legs\/} (also called 
{\/\em hairs\/}\/) of
$\Gamma$ while nontrivial orbits $\Edg(\Gamma)$ of $\sigma$ are its {\em edges\/}. The
{\em endpoints\/} 
of an edge $e = \{h_1,h_2\}\in \Edg(\Gamma)$ are $g(h_1)$ and $g(h_2)$.
We will use the notation $(F,V)$ or $(F,g,V)$ for a
graph $\Gamma =(g,\sigma)$ if we want to specify its set of
vertices and flags. 

A thin version of graphs in Definition~\ref{pre} was called in~\cite{env} 
{\em preordered\/} graphs, where ``preordered'' indicated
the absence of the global orders of the legs. In our situation,
nothing is ordered and this terminology would be misleading.

A {\em morphism\/} of graphs $\Phi:\Gamma'\to \Gamma''$ is a pair $(\psi,\phi)$
of morphisms of finite sets such that the diagram
\begin{equation}
\label{graphsmorphism}
\xymatrix@C=3.5em{F' \ar[d]_{g'}  & F''\ar@{_{(}->}[l]_{\psi}
\ar[d]^{g''}
\\
V' \ar@{->>}[r]^{\phi} &V''
}
\end{equation}
commutes. We moreover require $\phi$ to be a surjection, and $\psi$ 
equivariant with respect to the involutions, and bijection on fixed
points. Thus $\psi$
injectively maps flags to flags and bijectively legs to legs.
The pair $(\psi,\phi)$ must satisfy the following condition: 
If $\phi(x) \ne \phi(y)$, $x,y \in V'$ and
$e'$ is an edge of $\Gamma'$ with endpoints $x$ and $y$, then there exists an edge
$e''$ in $\Gamma''$ with endpoints $\phi(x)$ and $\phi(y)$ such that $e'=
\psi(e'')$. Graphs and their morphisms form a category $\prGR$.

The {\em fiber\/} $\inv{\Phi}x$ 
of a map $\Phi = (\psi,\phi): \Gamma' \to \Gamma''$ 
in~(\ref{graphsmorphism}) over $x\in V''$ is a
graph whose set of vertices is $\phi^{-1}(x)$ and whose
set of flags is $(\phi g)^{-1}(x)$.
The involution $\tau$ of $\inv{\Phi}x$ 
is defined by 
\[
\tau(h) := 
\begin{cases}   h& \hbox {if $h\in \Im(\psi)$,}
\\ 
\sigma(h)& \hbox {if $h\notin \Im(\psi)$,} 
\end{cases}
\]
where $\sigma$ is the involution of $\Gamma$.

For a finite set $X \in \Fin$, denote by $C_X$ the graph $X \to
\{1\}$ with the trivial involution on~$X$. It~can be viewed as a corolla
with spikes indexed by $X$ and the hub labelled by $\{1\}$. 
Let $\GR$ be the coproduct of
the categories ${\prGR}/C_X$ for $X \in \Fin$. 
We are aware that we are skating on thin ice here, but we may always
assume that we are working in the universe which is sufficiently big
so that the coproduct makes sense. Moreover, we will always know  in
concrete situations what
we are doing. Explicitly, an object
of  ${\prGR}/C_X$ is a morphism in $\prGR$ of the form
\begin{equation}
\label{V utery MR.}
\xymatrix@C=3.5em{F \ar[d]_{g}  & X\ar@{_{(}->}[l]_{\psi}
\ar[d]^{!}
\\
V \ar@{->>}[r]^{!} &\ \{1\}.
}
\end{equation}
The inclusion $\psi :X \hookrightarrow F$ induces an isomorphism between
$X$ and the set of legs of $\Gamma = (V,g,F)$. The set
$X$ thus appears as the set of labels of the legs of $\Gamma$, which
we call the {\/\em global\/} labels of $\Gamma$. The morphisms of 
$\GR$ are the morphisms of $\prGR$ that keeps the global labels fixed.

\begin{proposition}
\label{Jeste mi nevolali.}
The category \/ $\GR$ is a thick operadic category. 
\end{proposition}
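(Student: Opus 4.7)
The plan is to verify Definition~\ref{Uz treti tyden je strasliva zima.} directly, taking the cardinality functor to be $|\Gamma| := \Vert(\Gamma)$ on objects $\Gamma = (F,g,V) \in \GR$ and $|\Phi| := \phi$ on morphisms $\Phi = (\psi,\phi)$ as in~\eqref{graphsmorphism}. Functoriality of $|\dash|$ is immediate, since both the vertex component $\phi$ and the composition rule are inherited from $\Fin$. The fiber $\inv{\Phi}x$ is defined in the text immediately after~\eqref{graphsmorphism}; by construction its vertex set is $\phi^{-1}(x)$ and its flag set is $(\phi g')^{-1}(x)$, so the cardinality identity \eqref{PSA} is~automatic. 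To lift each $\inv{\Phi}x$ to an object of $\GR = \coprod_X \prGR/C_X$, we declare its indexing set to be its own set of legs $\Leg(\inv{\Phi}x)$, with the evident canonical map to the corresponding corolla.

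Next I would verify axiom (i). Given the commuting triangle~\eqref{Kure} in $\GR$ with $\Phi = (\psi_\Phi,\phi_\Phi)$, $\Theta = (\psi_\Theta,\phi_\Theta)$, $\Psi = (\psi_\Psi,\phi_\Psi)$ (so that $\Theta = \Psi\Phi$), and $r \in V_R$, define $\Phi_r : \inv{\Theta}r \to \inv{\Psi}r$ by restricting $\phi_\Phi$ to $\phi_\Theta^{-1}(r) \to \phi_\Psi^{-1}(r)$ on vertices, and by restricting $\psi_\Phi$ on flags. The square~\eqref{graphsmorphism} for $\Phi_r$ then commutes by restriction. The verifications that $\Phi_r$ respects the fiber involutions, is bijective on legs, and satisfies the last edge condition of a graph morphism, all reduce to the corresponding properties of $\Phi$ by inspection of which flags belong to $\Im(\psi_\Theta)$ versus $\Im(\psi_\Psi)$. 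The required equality $|\Phi_r| = |\Phi|^{-1}(r) \to |\Psi|^{-1}(r)$ is immediate from the definitions, and the assignment ${\rm Fib}_r : \GR/\Gamma_R \to \GR$ is a functor because taking preimages over $r$ commutes with composition of morphisms in $\GR/\Gamma_R$.

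For axiom (ii), the key identity $\Phi^{-1}(s) = \Phi_{|\Psi|(s)}^{-1}(s)$ is essentially tautological once one unwinds the constructions: both sides are the graph with vertex set $\phi_\Phi^{-1}(s)$, flag set $(\phi_\Phi g_T)^{-1}(s)$, and involution determined by the same case split on membership in $\Im(\psi_\Phi)$. Axiom (iii), the compatibility of iterated fibers, follows by diagram chasing: in both ways of taking the nested fiber over $q \in |\bQ|$, one ends up with the morphism obtained from $\Phi$ by the double restriction of vertex and flag maps to $\phi_\bb^{-1}(q) \to \phi_\ba^{-1}(q)$, the involutions agreeing because membership in the various images $\Im(\psi_\bullet)$ is preserved under restriction. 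Smallness of $\pi_0(\GR)$ is handled by the universe convention already adopted.

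The main obstacle I foresee is verifying (i) cleanly: one must check that the edge condition in the morphism definition (``$\phi(x)\ne\phi(y)$ and $e'$ an edge with endpoints $x,y$ forces $e' = \psi(e'')$ for some edge $e''$'') passes to the restricted maps $\Phi_r$, and, correlated with this, that a flag of $\inv{\Theta}r$ which was ``cut'' into a leg is sent by $\psi_\Phi$ precisely to a leg of $\inv{\Psi}r$. This is a matter of tracking the case distinction $h \in \Im(\psi)$ versus $h \notin \Im(\psi)$ through the two restriction processes; it is not deep but it is the step where bookkeeping is most delicate.
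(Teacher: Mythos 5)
Your overall plan matches the paper's — cardinality is the vertex set, fibers are inherited from $\prGR$, and the axioms are verified by unwinding the restricted maps. But there is a genuine gap in the step where you turn a $\prGR$-fiber into an object of $\GR = \coprod_X \prGR/C_X$: you declare the indexing set of $\inv{\Phi}x$ to be its own leg set $\Leg(\inv{\Phi}x)$ with the inclusion into the flags, whereas the paper labels the fiber by the set $(g'')^{-1}(x)$ of flags of the \emph{target} graph $\Gamma''$ adjacent to $x$, with structure map the restriction $\psi_x$ of $\psi$. These two choices produce canonically isomorphic but \emph{unequal} objects of $\GR$: one lives in the component $\prGR/C_{\Leg(\inv{\Phi}x)}$ with $\Leg(\inv{\Phi}x)\subset F'$, the other in $\prGR/C_{(g'')^{-1}(x)}$ with $(g'')^{-1}(x)\subset F''$.

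This matters because the operadic-category axioms are stated as strict equalities and memberships, not up to isomorphism. Concretely, in axiom (i) the map $\boldf_r:\bh^{-1}(r)\to\bg^{-1}(r)$ must be a morphism \emph{in $\GR$}, hence $\bh^{-1}(r)$ and $\bg^{-1}(r)$ must lie in the \emph{same} component $\prGR/C_{X_r}$. With the paper's convention both are labelled by $(g_\bR)^{-1}(r)$, the same set, and $\boldf_r$ respects it because the middle square of~\eqref{Ruda_je_rasista.} restricts. With your convention, $\bh^{-1}(r)$ is labelled by $\Leg(\bh^{-1}(r))=\psi_\bh\bigl((g_\bR)^{-1}(r)\bigr)\subset F_T$ while $\bg^{-1}(r)$ is labelled by $\Leg(\bg^{-1}(r))=\psi_\bg\bigl((g_\bR)^{-1}(r)\bigr)\subset F_S$; these are subsets of different ambient sets and are not equal in general, so the candidate $\boldf_r$ is not a morphism of $\GR$ at all. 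The bookkeeping issue you flag as the ``main obstacle'' (tracking the case split $h\in\Im(\psi)$ versus $h\notin\Im(\psi)$) is real, but it is downstream of this more basic problem: you need the right global labelling set before the restricted maps can even be morphisms in $\GR$. Replacing $\Leg(\inv{\Phi}x)$ by $(g'')^{-1}(x)$ (with $\psi_x$ as structure map) fixes this and makes the rest of your outline go through.
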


\begin{proof}
The cardinality functor assigns to a
graph  $\Gamma$ its set $\Vert(\Gamma)$ of vertices. 
We describe the fiber structure  and
leave the verification of the axioms as an exercise. 
A morphisms $\Phi : \Gamma' \to \Gamma''$
in $\GR$ is, by definition, a diagram
\begin{subequations}
\begin{equation}
\label{Ruda_je_rasista.}
\xymatrix@R=.3em@C=2.5em{
F'  \ar[dddd]_{g'} && \ar@{_{(}->}[ll]_\psi \ar[dddd]^{g''} F''
\\
&X \ar@{^{(}->}[lu] \ar[dd]  \ar@{_{(}->}[ru] &
\\
{}
\\
&\{1\}  &
\\
V' \ar@{->>}[ru]^{!} \ar[rr]^\phi &&\ V''.\ar@{->>}[lu]_{!}
}
\end{equation}
It induces for each $x \in V''$ a commutative diagram
\begin{equation}
\label{Cekam na vysedky a bojim se.}
\xymatrix@C=3.5em{(\phi g')^{-1}(x) \ar[d]_{g'_x}  
& 
(g'')^{-1}(x)\ar@{_{(}->}[l]_(.5){\psi_x}  \ar[d]^{g''_x}
\\
\phi^{-1}(x) \ar[r]^{\phi_x} & \{1\}
}
\end{equation}
\end{subequations}
of finite sets in which
the morphisms $g'_x,\phi_x,g''_x$ and $\psi_x$ are the restrictions of the 
corresponding morphisms from~(\ref{Ruda_je_rasista.}). 

We interpret the right vertical morphism of~\eqref{Cekam na vysedky a
  bojim se.} as a corolla $C_X$ with $X :=
(g'')^{-1}(x)$, imposing the
trivial involution on $X$. Due to the definition of
fibers of maps in $\prGR$, the diagram above represents a map 
of the fiber $\inv\Phi x$ in $\prGR$ to $C_X$, which makes it an
object of~$\GR$. We take it as the definition of the fiber {\em in \/}  $\GR$. 
So the fiber gets its global labels from the set
$(g'')^{-1}(x)$.
\end{proof}

\begin{proposition}
The category \, $\GR$ is a thick cloven unital category. 
\end{proposition}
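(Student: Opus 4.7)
The plan is to exhibit an explicit cleavage on the cardinality functor $|{-}|:\GR\to\Fin$ by relabeling vertices, verify its compatibility with the fiber construction from Proposition~\ref{Jeste mi nevolali.}, and then recognize the corollas $C_X$ as the chosen local terminal objects witnessing unitality.

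First, I would define the cleavage. Given $\Gamma=(F,g,V)\in\GR$ with $|\Gamma|=V$ and an isomorphism $\sigma:V\to V'$ in $\Fin$, set $\tilde\Gamma:=(F,\,\sigma g,\,V')$, keeping the flag set, the involution, and the global labels of $\Gamma$ unchanged. The lift $\tilde\sigma:\Gamma\to\tilde\Gamma$ is the pair $(\id_F,\sigma)$ in the format of~\eqref{graphsmorphism}; this is a morphism in $\GR$ because $\id_F$ fixes the legs (hence the global labels), and it trivially satisfies the edge condition since $\sigma$ is a bijection on vertices. Functoriality, $\widetilde{\sigma''\sigma'}=\tilde{\sigma''}\tilde{\sigma'}$ and $\widetilde{\id}=\id$, is immediate from the formula.

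Next, I would check the compatibility with the fiber functor required by Definition~\ref{8 dni na chalupe!}. For a morphism $\boldf=(\psi,\phi):\Gamma\to\Gamma'$ and vertex-set isomorphisms $\rho:V\to\tilde V$, $\sigma:V'\to\tilde V'$, the lifted morphism is $\btf=(\psi,\sigma\phi\rho^{-1})$. For $v'\in V'$ and $u':=\sigma(v')$, the fibers $\boldf^{-1}(v')$ and $\btf^{-1}(u')$, built as in the proof of Proposition~\ref{Jeste mi nevolali.}, have vertex sets $\phi^{-1}(v')$ and $\rho(\phi^{-1}(v'))$ respectively; they share the flag set $(\phi g)^{-1}(v')$, the induced involution, and the global labels $(g')^{-1}(v')$. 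The unique order-irrelevant isomorphism between them is exactly the pair $(\id,\rho|_{\phi^{-1}(v')})$, which is the lift prescribed by our cleavage, verifying the lifting square~\eqref{7my den na chalupe}.

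For unitality, I would observe that $\GR=\coprod_X\prGR/C_X$ has $\pi_0(\GR)$ indexed by the global-label sets $X$: each slice $\prGR/C_X$ is connected through its terminal object $C_X$, and morphisms in $\GR$ preserve global labels. I take $\bU_X:=C_X$ as the chosen local terminal object of its component; it has cardinality $\{1\}$ by construction. For left unitality, given $\Gamma=(F,g,V)$ with $V=\{1,\ldots,n\}$, the fiber $\id_\Gamma^{-1}(1)$ has single vertex $\{1\}$, flag set $g^{-1}(1)$, trivial involution (since the formula for $\tau$ in Definition~\ref{pre} gives $\tau=\id$ because $\psi=\id_F$ has full image), and global labels $g^{-1}(1)$; this is precisely the corolla $C_{g^{-1}(1)}$. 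For right unitality, the unique morphism $!:\Gamma\to C_X$ in $\GR/C_X$ contracts everything to the single vertex of $C_X$, so its fiber over $1$ recovers $\Gamma$ itself (the global labels match because $X=g_{C_X}^{-1}(1)$); functoriality of $\Fib_1$ on morphisms in $\GR/C_X$ reduces, via Definition~\ref{Uz treti tyden je strasliva zima.}(i), to the domain assignment.

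The main obstacle is essentially bookkeeping: in the fiber-compatibility step, one must carefully track how the global labels of the fiber (inherited from the \emph{target's} flag structure at the relevant vertex) and its involution behave under the vertex relabeling, ensuring that the naive lift $(\id,\rho|_{\phi^{-1}(v')})$ really lives in the correct slice $\prGR/C_{(g')^{-1}(v')}$ on both sides. Once this matching is unwound, every clause of Definitions~\ref{8 dni na chalupe!} and~\ref{Pisu v Mercine} becomes a direct calculation.
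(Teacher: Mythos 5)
Your proposal is correct and follows the same route as the paper: the same vertex-relabeling cleavage (new graph $(F,\sigma g,V')$ with flags, involution and global labels unchanged), and the same identification of $\pi_0(\GR)$ with $\Fin$ via global labels with the corollas $C_X$ as chosen local terminals. The paper's proof states the cleavage and the terminal objects and ``leaves the details to the reader''; your write-up supplies those details (fiber compatibility in Definition~\ref{8 dni na chalupe!} and the left/right unitality checks) correctly.
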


\begin{proof}
Let us start with the cloven structure. Given a
graph  $\Gamma = (V,g,F) \in \GR$ with the
global labels $X$ and an isomorphism  $\phi: V \to W$  
of finite sets, we define the graph $\tilde \Gamma$ and an
isomorphism $\tilde \phi$ in the lifting square
\[
\xymatrix@R=1.3em{
\Gamma \ar@{-->}[r]^{\tilde \phi}\ar@{~>}[d] & \tilde \Gamma \ar@{~>}[d]
\\
V\ar[r]^{\phi}_\cong   &W
}
\]
as follows. We put $\tilde \Gamma = (W,\tilde g ,F)$, with the
involution $\sigma: F \to F$ taken from $\Gamma$ and $\tilde g :=
\phi g$. The morphism $\tilde \phi$ is given by the diagram
\[
\xymatrix@R=.3em@C=2.5em{
F  \ar[dddd]_{g} && \ar@{_{(}->}[ll]_{\id_F} \ar[dddd]^{\tilde g} F
\\
&X \ar@{^{(}->}[lu] \ar[dd]  \ar@{_{(}->}[ru] &
\\
{}
\\
&\{1\}  &
\\
V \ar@{->>}[ru] \ar[rr]^\phi &&\ W.\ar@{->>}[lu]
}
\]

The (big) set $\pi_0(\GR)$ of connected components is $\Fin$. 
A graph $\Gamma \in \GR$ belongs to the component $X \in \Fin$ if and only if its set
of global labels equals $X$. The chosen local terminal
objects~\eqref{Pisu v Mercine} 
are the corollas $\bigstar_X : C_X \stackrel\id\to C_X$. 
In words, the chosen terminal objects are corollas such that the
global labels coincide with the (labels of) the spikes.
Figure~\ref{Popea} shows
the local terminal object $\bigstar_X$ with $X = \{a,b,c,d\}$. The labels of the legs
are encircled.
\begin{figure}
\begin{center}
\includegraphics{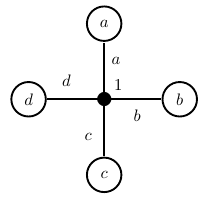}
\end{center}
\caption{The local terminal object $\bigstar_X$ of $\GR$ with $X = \{a,b,c,d\}$.\label{Popea}}
\end{figure}
We leave again the details to the reader.
\end{proof}

The thin operadic category corresponding to $\GR$, realized as the restriction $\gr
:=\R\GR$ in the proof of Theorem~\ref{Dnes jedu do Kolina a
  zitra letim do Dvora.}, is a simple modification of the constructions
above. The main difference is that the sets of vertices of graphs in $\gr$ are finite
ordinals $\underline n$, $n \geq 1$. All definitions translate
verbatim, only the diagram~\eqref{Cekam na vysedky a bojim se.} has
now the form
\[
\xymatrix@C=3.5em{(\phi g')^{-1}(i) \ar[d]_{g'_i}  
& 
(g'')^{-1}(i)\ar@{_{(}->}[l]_(.5){\psi_i}  \ar[d]^{g''_i}
\\
\phi_{\it pb}^{-1}(i) \ar[r]^{\phi_i} & \underline 1
}
\]    
with $i \in \underline n$, where $g'_i$ is the composite of the
restriction $(\phi g')^{-1}(i) \to \phi^{-1}(i)$ of $g'$ with the
unique order-preserving isomorphism of the finite ordered set
$\phi^{-1}(i)$ with the unique finite ordinal $\phi_{\it
  pb}^{-1}(i)$. 

As stated in~\cite[Proposition~5.11]{kodu}, algebras for the constant operad over the
category $\Gr$ are modular operads with the arities in finite
ordinals. Let us see what happens in our setup.
Let thus~$\bzeta$ be the constant unital $\GR$-operad given by $\bzeta(\Gamma):=
\unit$ for each $\Gamma \in \GR$ and the composition~law
\[
\mu_\Phi : \bzeta(\Gamma'')\ \ot \ \bzeta(\Phi) = \unit \ot \unit 
\longrightarrow \unit = \bzeta(\Gamma')
\]
for $\Phi : \Gamma' \to \Gamma''$ a morphism in $\GR$.

It will not be difficult to investigate $\bzeta$-operads directly, but
we can
further simplify our life by looking at its restriction  $\zeta$ to
$\gr$. Both operads $\bzeta$
and $\zeta$ have equivalent, ``almost'' isomorphic, categories of algebras 
by Proposition~\ref{Odpoledne zavolam.}. An algebra for a
$\gr$-operad is, by definition, a collection indexed by $\pi_0(\gr) =
\Fin$. A $\zeta$-algebra thus sits on a collection $\M = \{M(X) \ | \
X \in \Fin\,\}$. 

Let us investigate the structure operations, starting with the
one-vertex graph, the corolla 
$\bigstar_\omega \in \gr$ given, in the representation used 
in~\eqref{V utery MR.}, 
by the diagram  
\[
\xymatrix@C=3.5em{X' \ar[d]_{!}  & X''\ar[l]_{\omega}^\cong
\ar[d]^{!}
\\
\underline 1 \ar@{=}[r] &\ \underline 1,
}
\]
in which $\omega$ is an isomorphism of finite sets and the involutions
on $X'$ and $X''$ are trivial. Its list of
sources contains only  $X'$, its target is $X''$. The associated operations
\begin{equation}
\label{1}
\alpha_{\bigstar_\omega}:
\M(X') \cong \zeta(X') \ot \M(X') \longrightarrow \M(X'') 
\end{equation}
make $\M$ a module over the groupoid of finite sets and their isomorphism.
Let $X$, $Y$ be disjoint finite sets and $x \not\in X$, $y \not\in
Y$, $x \not= y$. Consider the two-vertex graph given by
\begin{equation}
\label{Dnes dela Jaruska jablicka v zupanu.}
\xymatrix@C=3.5em{X \cup \{x,y\} \cup Y \ar[d]_{g}  & X \cup 
Y\ar@{_{(}->}[l]_(.4){\psi}
\ar[d]^{!}
\\
\underline 2 \ar@{->>}[r]^{!} &\ \underline 1,
}
\end{equation}
with the involution $\sigma$ acting trivially on $X \cup Y$,
$\sigma(x) = y$, $\psi$ the inclusion and 
the vertex map $g$ which equals $1$ on $X \cup \{x\}$ and $2$ on $Y \cup \{y\}$.
It has the sources $X \cup \{x\},
Y\cup \{y\}$, and the target $X \cup Y$. The associated operation
is a map
\begin{equation}
\label{2}
{}_x\circ_y : \M(X \cup \{x\}) \ot \M(Y\cup \{y\}) \longrightarrow \M(X \cup Y).
\end{equation}
Consider finally the one vertex graph, the tadpole
\begin{equation}
\label{Prinesu si slehacku.}
\xymatrix@C=3.5em{X \cup \{a,b\} \ar[d]_{!}  & X \ar@{_{(}->}[l]_(.4){\psi}
\ar[d]^{!}
\\
\underline 1 \ar@{=}[r] &\ \underline 1,
}
\end{equation}
with the involution
$\sigma$ acting trivially on $X$, $\sigma(a) = b$, and $\psi$ the
inclusion. It has only one source $X \cup \{a,b\}$ and the target $X$. 
The corresponding operation has the form
\begin{equation}
\label{3}
\circ_{ab} : \M(X \cup \{a,b\}) \longrightarrow \M(X).
\end{equation}

\begin{proposition}
\label{Je utery a uz jsem docela unaven.}
The operations~\eqref{1},~\eqref{2} and~\eqref{3} induce on $\M$ the structure of a
modular operad in the presentation of\/~\cite[Definition~A.1]{kodu}.
\end{proposition}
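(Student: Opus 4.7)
The plan is to derive each axiom of the modular operad presentation of~\cite[Definition~A.1]{kodu} from a suitable instance of the associativity axiom~(i) of Definition~\ref{Jaruska uvarila brokolici.}, together with the compatibility with the cloven action~(ii) and the unitality~(iii), using the observation that every morphism in $\gr$ factors, via its fiber structure, into compositions of the three basic types~\eqref{1},~\eqref{2} and~\eqref{3}. Since $\zeta$ is the constant operad, the operad composition $\mu_\Phi$ reduces to the canonical identification $\unit\otimes\unit\cong\unit$; only the algebra action $\alpha_\Gamma$ carries nontrivial information, so each relation among the $\alpha_\Gamma$ reads directly as a relation among~\eqref{1}--\eqref{3}.

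I would first verify that~\eqref{1} assembles into a functorial contravariant action of the groupoid of finite sets on $\M$: $\alpha_{\bigstar_{\omega'\omega}}=\alpha_{\bigstar_\omega}\circ\alpha_{\bigstar_{\omega'}}$ is (i) applied to the categorical composition of the two corollas $\bigstar_{\omega'}$, $\bigstar_\omega$, while $\alpha_{\bigstar_{\id}}=\id$ follows from~(iii). Equivariance of ${}_x\circ_y$ and of $\circ_{ab}$ under isomorphisms of the relevant label sets is compatibility~(ii) applied to the lifting squares obtained by relabelling the legs of~\eqref{Dnes dela Jaruska jablicka v zupanu.} and~\eqref{Prinesu si slehacku.}. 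The symmetry identity $a\,{}_x\!\circ_y\, b = b\,{}_y\!\circ_x\, a$ is then~(ii) applied to the vertex-swap automorphism of the two-vertex graph~\eqref{Dnes dela Jaruska jablicka v zupanu.}.

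For associativity of ${}_x\circ_y$, I would use the linear three-vertex graph whose corollas are $C_{A\cup\{x_1\}}$, $C_{B\cup\{y_1,x_2\}}$ and $C_{C\cup\{y_2\}}$ joined by the edges $\{x_1,y_1\}$ and $\{x_2,y_2\}$, together with its two morphisms onto the two distinct intermediate two-vertex graphs; applying~(i) to the resulting two factorizations yields the pentagon $(a\,{}_{x_1}\!\circ_{y_1}\, b)\,{}_{x_2}\!\circ_{y_2}\, c = a\,{}_{x_1}\!\circ_{y_1}\,(b\,{}_{x_2}\!\circ_{y_2}\, c)$. Commutativity of two self-contractions $\circ_{ab}\circ_{cd}=\circ_{cd}\circ_{ab}$ arises analogously from the one-vertex graph with two disjoint loops via the two morphisms that contract a single loop at a time. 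The compatibility of ${}_x\circ_y$ with $\circ_{ab}$ comes from the two-vertex graph carrying one connecting edge $\{x,y\}$ and an additional loop $\{a,b\}$ at one endpoint, factored either by contracting the loop first or by performing the binary composition first. Finally, the unit identity $a\,{}_x\!\circ_y\,\bigstar_{\{x,y\}} = a$ is obtained from~(iii) applied to the morphism that collapses the terminal corolla vertex.

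The principal obstacle is the combinatorial bookkeeping: for each chosen $\Phi$ in $\gr$ and each fiber $\Phi^{-1}(i)$ one has to recognise which elementary operation among~\eqref{1}--\eqref{3} the restricted action $\alpha_{\Phi^{-1}(i)}$ realises, and track how the groupoid actions from~\eqref{1} intertwine the binary compositions and contractions when flag sets are re-indexed during the passage through the intermediate graphs. Once this dictionary between fibers and basic operations is set up, every modular operad axiom is an immediate readoff from a specific instance of~\eqref{Sel jsem spat ve tri hodiny.}, and no further verification is required.
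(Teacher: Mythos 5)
Your proposal is correct in substance and follows the same strategy the paper itself invokes: the paper's proof is a one-line referral to~\cite[Proposition~5.11]{kodu}, flagging that~\eqref{Dnes dela Jaruska jablicka v zupanu.} and~\eqref{Prinesu si slehacku.} play the roles of graphs~(69b) and~(69a) there, and what you write is precisely the blueprint that that cited proof fills in — identify the elementary graphs realizing each generating operation, then read each modular-operad axiom off one instance of the algebra associativity/equivariance/unitality. You are also right that the constancy of $\zeta$ collapses all operadic data, so that only the $\alpha_\Gamma$ carry information.

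One point worth tightening: the functoriality $\alpha_{\bigstar_{\omega'\omega}}=\alpha_{\bigstar_\omega}\circ\alpha_{\bigstar_{\omega'}}$ and the unit identity do not come from the unitality axiom~(iii) of Definition~\ref{Jaruska uvarila brokolici.} alone; they need the associativity square~(i) applied to a morphism one of whose fibers is the local terminal corolla, combined with~(iii) to kill that factor. As stated, "applied to the categorical composition" and "applied to the morphism that collapses the terminal corolla vertex" are a little loose, since~(i) is stated for a single morphism $S\to T$ and its fibers, not for composable pairs. Similarly, the symmetry $a\,{}_x\!\circ_y b = b\,{}_y\!\circ_x a$ via the vertex-swap lift is right, but you should be explicit that the lift $\tilde\sigma$ permutes the list of sources via $\bs(\sigma)$, so~(ii) identifies ${}_x\circ_y$ precomposed with the flip with ${}_y\circ_x$; this is exactly what the twist map in the modular-operad axiom encodes. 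With those clarifications, the argument matches the one the paper defers to.
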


\begin{proof}
The proof is practically the same as that
of~\cite[Proposition~5.11]{kodu}. We are not going to repeat it
here. The graph in~\eqref{Dnes dela Jaruska
  jablicka v zupanu.} is an analog of the graph in~(69b)
of~\cite{kodu}, and the graph in~\eqref{Prinesu si slehacku.} an
analog of the graph in~(69a) loc.~cit.
\end{proof}

Suppose for a moment that the base monoidal category $\ttV$ is the
category of graded vector spaces. 
For a vector space $A$ of dimension $k$, we denote by $\det(A) :=
\hbox {\large$\land$}^k(A)$ the
top-dimensional piece of its Grassmann algebra. If $S$ is a nonempty finite set,
we let $\det(S)$ to be the determinant of the vector space spanned
by $S$. Let $\bzetaodd$ ($\bzeta$ turned upside down) be the
$\GR$-operad defined by  $\bzetaodd(\Gamma) := \det(\Edg(\Gamma))$,
the determinant of the internal edges of $\Gamma$, with the
composition law given by the natural isomorphism
\begin{equation}
\label{pred 9 dny}
\mu_\Phi:\bzetaodd(\Gamma'') \ot \bzetaodd(\Phi) =
\det(\Edg(\Gamma'') \ot \bigotimes_{v \in \Vert(\Gamma'')}
\Edg(\Gamma_v) 
\stackrel\cong\longrightarrow
\det(\Edg(\Gamma') = \bzetaodd(\Gamma'),
\end{equation} 
where  $\Gamma_v$, $v \in \Vert(\Gamma'')$ are the fibers of a
map $\Phi : \Gamma' \to \Gamma''$. The isomorphism uses the fact
that the set of edges of $\Gamma'$ is the union of the set
of edges of\/ $\Gamma''$ with the sets of edges of the fibers of
$\Phi$. In~(\ref{pred 9 dny}) we thus see an isomorphism of
one-dimensional vector spaces given by multiplication with the sign
resulting from the reordering the factors. We formulate

\begin{proposition}
The algebras for the constant\/ $\GR$-operad $\bzeta$ are modular
operads in the presentation~\cite[Definition~A.1]{kodu}. 
The algebras for the odd\/ $\GR$-operad $\bzetaodd$ are odd modular
operads in the presentation~\cite[Definition~A.2]{kodu}. 
\end{proposition}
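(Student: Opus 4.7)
The strategy is to reduce both statements to the thin case, where they follow from (direct extensions of) Proposition~\ref{Je utery a uz jsem docela unaven.} and its analog in~\cite[Proposition~5.11]{kodu}. Concretely, by Proposition~\ref{Dnes mam silnou socialni fobii.} the category of $\bzeta$-algebras is naturally isomorphic (after identifying $\pi_0(\GR)$ with $\pi_0(\gr)$) to the category of $\zeta$-algebras, where $\zeta := \R\bzeta$ is the constant unital $\gr$-operad. The first statement is then exactly the content of Proposition~\ref{Je utery a uz jsem docela unaven.}, so nothing further is required for it.

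For the odd case, I would likewise let $\zetaodd := \R\bzetaodd$ and analyze $\zetaodd$-algebras by running through the same three families of generating morphisms that were used for~$\zeta$. The underlying collection $\M = \{\M(X)\mid X\in\Fin\}$ is indexed as before since $\pi_0(\gr) = \Fin$. For the corolla isomorphisms $\bigstar_\omega$ of~\eqref{1}, one has $\Edg(\bigstar_\omega) = \emptyset$, so $\bzetaodd(\bigstar_\omega) = \unit$ and the groupoid-of-isomorphisms action on $\M$ is unchanged. For the two-vertex graph~\eqref{Dnes dela Jaruska jablicka v zupanu.}, whose set of edges is the single pair $\{x,y\}$, the structure map becomes
\[
{}_x{\circ}_y : \det(\{x,y\})\otimes \M(X\cup\{x\})\otimes \M(Y\cup\{y\})\longrightarrow \M(X\cup Y),
\]
and for the tadpole~\eqref{Prinesu si slehacku.}, with single edge $\{a,b\}$, it becomes
\[
\circ_{ab} : \det(\{a,b\})\otimes \M(X\cup\{a,b\})\longrightarrow \M(X).
\]
These are precisely the generators of an odd modular operad in the sense of~\cite[Definition~A.2]{kodu}.

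It then remains to match axioms. Here the associativity diagram~\eqref{Sel jsem spat ve tri hodiny.}, specialized to the composites of the above graphs that were used to derive the modular operad axioms in the proof of~\cite[Proposition~5.11]{kodu}, again yields every required relation; the novelty is only that each relation now carries a sign produced by the isomorphism~\eqref{pred 9 dny}. Since the edges of a composite graph $\Gamma'$ split canonically as the edges of $\Gamma''$ together with the edges of the fibers, and since the Koszul signs in the odd modular operad axioms are defined precisely so as to be the signs for the two possible ways of totally ordering this disjoint union, the match is essentially forced.

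The main obstacle, and the only part that needs real work, is this sign bookkeeping: verifying that when two elementary compositions are performed in two different orders (for example, contracting an edge $e$ before an edge $e'$ versus after, or combining a $\circ_{ab}$ loop with a ${}_x\circ_y$ merger in both possible orders), the sign discrepancy coming from reordering $e$ and $e'$ inside $\det\bigl(\Edg(\Gamma')\bigr)$ equals the corresponding Koszul sign in the axiom of~\cite[Definition~A.2]{kodu}. This is checked case by case for the finite list of relations, and is purely a coherence statement about determinant lines under disjoint union; with the odd case thus established, the proposition is proved.
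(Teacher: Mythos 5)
Your proposal is correct and takes essentially the same route as the paper: the paper's proof also reduces to the thin restriction, cites Proposition~\ref{Je utery a uz jsem docela unaven.} (itself a transplant of~\cite[Proposition~5.11]{kodu}) for the even case, and then dismisses the odd case as "similar, taking into account the nontrivial signs" coming from the determinant-line isomorphism~\eqref{pred 9 dny}. You have merely spelled out the generating graphs (corolla, two-vertex merger, tadpole) and the sign bookkeeping more explicitly than the paper bothers to, which is a reasonable expansion of the same argument rather than a different one.
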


\begin{proof}
The first part is a reformulation of Proposition~\ref{Je utery a uz
  jsem docela unaven.}. The second part can be established in a
similar way, taking into account the nontrivial signs coming 
from the isomorphism in~\eqref{pred 9 dny}.
\end{proof}

In the same way we can get the thick versions of the operadic
categories adapted for classical operads, wheeled PROPs, dioperads and
$\frac12$PROPs respectively, listed in diagram (37) of~\cite{env}.

\appendix

\section{Semi-ordered operadic categories}

It turns out that {\em every\/} -- no cleavage needed -- standard
(thin) operadic category has a thick counterpart with cardinalities in
{\em ordered} finite sets. But this fact is not very interesting
since it does not remove orders from the picture. We include 
the related material just to finish our story.
Let us denote by $\semi$ the category of ordered sets and their {\em all\/},
not necessarily order-preserving, maps. 

\begin{definition}
A thick operadic category $\bO$ is {\/\em semi-ordered\/} if a
factorization of the
cardinality functor $\crd: \bO \to \Fin$ through the
category $\semi$ is specified. A semi-ordered operadic category is {\/\em cloven\/} if
functorial lifts  $\tilde \sigma$  in~\eqref{Pujdeme k Pakousum?}
of order-preserving isomorphisms $\sigma : X\to Y$ are specified.
\end{definition}

Since every finite set can be ordered, under a mild categorial
assumptions, every functor to finite sets factorizes through $\semi$,
every operadic category admits a semi-ordered structure. However,
the chosen factorization determines the class of isomorphisms required to
have functorial lifts, so not every semi-ordered operadic category is cloven. 
The notion of a cloven $\bO$-module is easily translated to the cloven
semi-ordered case; we require the
action of $\tilde \sigma$ in~\eqref{Nemohu najit tu
  propisovacku.} only if $\sigma$ is order-preserving. 
The definitions of operads and their algebras remain the same.

Notice that thin operadic categories are always ``cloven semi-ordered,''
since the image of the cardinality functor already consists of
ordered sets, and the only order-preserving isomorphism $\sigma :
\underline n \to  \underline n$ that must admit functorial lift 
is the identity automorphism.
A semi-ordered operadic category (thin or thick) is {\/\em ordered\/},
if the image of the chosen factorization lies within the category of ordered
sets and their order-preserving~maps.

\begin{theorem}
\label{V patek exkurze do Caslavi.}
There is a natural correspondence \ $\E : \ttO \leftrightarrow
\bO: \R$ (extension and restriction) which extends to an equivalence
of the categories of (standard, thin) operadic categories with the
category of semi-ordered cloven thick operadic categories.
The corresponding operadic
categories have equivalent categories of operads and their
algebras.
This equivalence restricts to the subcategories of ordered operadic categories.
\end{theorem}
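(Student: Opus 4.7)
The plan is to repeat the proof strategy of Theorem~\ref{Dnes jedu do Kolina a zitra letim do Dvora.} in the semi-ordered setting, exploiting the crucial fact that the only order-preserving isomorphism $\underline n \to \underline n$ is the identity. This collapses the equivalence relation appearing in the construction of the extension, making the semi-ordered version of $\E$ particularly simple.

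First I would define the restriction $\R\bO$ as the full subcategory of $\bO$ spanned by the objects whose cardinality is a finite ordinal $\underline n \in \sFin \subset \semi$. For a morphism $\boldf: \bS \to \bT$ in $\R\bO$ and $i \in |\bT|$, the set $|\inv\boldf i|$ is a subset of the ordinal $|\bS|$, and the unique order-preserving isomorphism $\sigma_i: |\inv\boldf i| \to \underline k_i$ lifts by the semi-ordered cleavage; its target is declared the fiber of $\boldf$ in $\R\bO$, mimicking~\eqref{Zitra prece jen na letisti budu.}. Next I would build $\E\ttO$ with objects being pairs $(X, A)$ consisting of a finite ordered set $X$ and $A \in \ttO$ with $|A| = \underline{|X|}$, together with the unique order-preserving identification $X \cong \underline{|X|}$; no further equivalence relation is required, since the analogue of~\eqref{Dnes pojedeme na chalupu a tam budeme neskutecnych 9 dni.} would involve only lifts of order-preserving automorphisms of ordinals, i.e., identities. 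Morphisms $(X, S) \to (Y, T)$ are morphisms $f: S \to T$ in $\ttO$, whose induced cardinality map transports to a morphism $X \to Y$ in $\semi$; fibers are obtained by transporting the fibers of $\ttO$ along the order-preserving identifications, as in~\eqref{Budu se sprchovat asi az zitra.}.

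Then I would verify $\R\E\ttO \cong \ttO$ and $\E\R\bO \cong \bO$ via the analogues of the functors in Proposition~\ref{Vratil jsem se od doktora Reicha.}. The essential point is that for any $\bS \in \bO$ with $|\bS| = X$, the canonical order-preserving bijection $X \to \underline{|X|}$ admits a lift by the semi-ordered cleavage, yielding the required object in $\R\bO$. The equivalences of operads and of algebras in Propositions~\ref{Odpoledne zavolam.} and~\ref{Dnes mam silnou socialni fobii.} carry over essentially verbatim, with ``cloven module action'' reinterpreted as the action of lifts of order-preserving isomorphisms; preservation of unitality is automatic since local terminal objects have singleton cardinalities. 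For the final clause, I would observe that $\bO$ is ordered precisely when every cardinality map $|\boldf|: X \to Y$ is order-preserving, and under the correspondence $|\boldf|$ is the transport of some $|f|$ by order-preserving identifications, so $\bO$ is ordered if and only if $\R\bO$ is; the equivalence therefore restricts to the ordered subcategories.

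The main obstacle will be the careful bookkeeping needed to confirm that every cleavage invocation in the proof of Theorem~\ref{Dnes jedu do Kolina a zitra letim do Dvora.} requires only an order-preserving isomorphism when adapted to the present setting. The crucial observation that makes this work is that the canonical comparison maps between subsets of finite ordinals and the corresponding finite ordinals are always order-preserving, so no lift of a non-order-preserving isomorphism is ever required. Once this is established, the remaining verifications are structurally identical to those in the cloven case and can be left to the reader.
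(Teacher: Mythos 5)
Your proposal follows essentially the same route as the paper: the paper's proof simply points to a simplified version of the argument for Theorem~\ref{Dnes jedu do Kolina a zitra letim do Dvora.}, with the key simplification being exactly the one you identify — that the unique order-preserving isomorphism $X \cong \underline{|X|}$ collapses the equivalence relation in the construction of $\E\ttO$ to nothing. Your additional observations (that all canonical comparison maps encountered are order-preserving, and that the ordered restriction is detected at the level of cardinality maps) are correct and fill in the details the paper leaves implicit.
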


\begin{proof}
A very simplified version of the proof of
Theorem~\ref{Dnes jedu do Kolina a zitra letim do Dvora.}. The
simplification comes from the fact that the construction of the
extension $\E$ does not use equivalence classes, since the objects of $\E
\ttO$ are diagrams~\eqref{Dnes jsem poslal sparu Ortaggiovi.} with 
$\sigma$ the unique order-preserving isomorphism.
\end{proof}



\end{document}